\title[Complex analytic MMP for normal pairs]{Minimal model program for normal pairs along log canonical locus in complex analytic setting}
\author{Kenta Hashizume}
\date{2025/08/18}
\keywords{minimal model theory, normal pair, complex analytic space}
\subjclass[2020]{14E30}
\address{Department of 
Mathematics, Faculty of Science, Niigata University, Niigata 950-2181, Japan}
\address{Institute for Research Administration, Niigata University, Niigata 950-2181, Japan}
\email{hkenta@math.sc.niigata-u.ac.jp}
\newtheorem{thm}{Theorem}[section]
\newtheorem{lem}[thm]{Lemma}
\newtheorem{cor}[thm]{Corollary}
\theoremstyle{definition}
\newtheorem{defn}[thm]{Definition}
\newtheorem{rem}[thm]{Remark}
\newtheorem*{ack}{Acknowledgments} 
\newtheorem*{divisor}{Divisors and morphisms} 
\newtheorem*{b-divisor}{b-divisors} 
\newtheorem*{Rlinearsystem}{$\mathbb{R}$-linear system}
\newtheorem*{nonneflocus}{Non-nef locus} 
\newtheorem*{quasi-log}{Quasi-log complex analytic space} 
\newtheorem*{sing}{Singularities of pairs} 
\newtheorem*{g-pair}{Generalized pairs} 
\newtheorem*{adj-g-pair}{Divisorial adjunction for generalized pairs} 
\newtheorem*{mmp-g-pair}{MMP for generalized pairs}
\newtheorem{step5}{Step}
\newtheorem*{claim*}{Claim}
\begin{document}

\begin{abstract}
We establish the minimal model theory for normal pairs along log canonical locus in the complex analytic setting. 
This is the complex analytic analog of the previous result by the author \cite{has-mmp-normal-pair}. 
\end{abstract}

\maketitle

\tableofcontents

\section{Introduction}

After the paper \cite{fujino-analytic-bchm} by Fujino, the minimal model theory for log canonical (lc, for short) pairs in complex analytic setting is being developed (\cite{fujino-analytic-conethm}, \cite{eh-analytic-mmp}, \cite{fujino-analytic-lcabundance}, \cite{eh-analytic-mmp-2}). 
Moreover, Fujino \cite{fujino-quasi-log-analytic} established foundations of {\em quasi-log complex analytic spaces}, which is the complex analytic analog of quasi-log scheme. 
In the algebraic case, the theory of quasi-log schemes played an important role in \cite{has-mmp-normal-pair} to prove the minimal model program (MMP, for short) for normal pairs whose singularities are beyond lc. 
In the paper \cite{has-mmp-normal-pair}, a lot of results and ideas in \cite{bchm}, \cite{birkar-flip}, \cite{haconxu-lcc},  \cite{hashizumehu}, and \cite{fujino-book} were involved. 
Currently, it is known that those results and ideas are valid for the complex analytic setting (\cite{fujino-analytic-lcabundance}, \cite{eh-analytic-mmp}, \cite{eh-analytic-mmp-2}, \cite{fujino-quasi-log-analytic}). 
Therefore, it is natural to study an analogous result of \cite{has-mmp-normal-pair} in the complex analytic setting.

The goal of this paper is to study the termination of the MMP for a normal pairs. 
Let us quickly recall the setup in \cite{fujino-analytic-bchm} and \cite{eh-analytic-mmp}. 
As in Definition \ref{defn--property(P)}, we say that a projective morphism $\pi \colon X \to Z$ of complex analytic spaces and a compact subset $W \subset Z$ satisfies the property (P) if the following conditions hold:
\begin{itemize}
\item[(P1)]
$X$ is a normal complex variety,
\item[(P2)]
$Z$ is a Stein space,
\item[(P3)]
$W$ is a Stein compact subset of $Z$, and
\item[(P4)]
$W \cap V$ has only finitely many connected components for any analytic subset $V$
which is defined over an open neighborhood of $W$. 
\end{itemize}
Because we are interested in the existence of minimal models or Mori fiber spaces for normal (or lc) pairs over an open neighborhood of $W$, we will freely shrink $Z$ around $W$ if necessary.

The following theorem is the main result of this paper.

\begin{thm}[cf.~Theorem \ref{thm--termination-lcmmp-main}]\label{thm--termination-lcmmp-intro}
Let $\pi \colon X \to Z$ be a projective morphism from a normal analytic variety $X$ to a Stein space $Z$, and let $W \subset Z$ be a compact subset such that $\pi$ and $W$ satisfy (P). 
Let $(X,\Delta)$ be a normal pair and $A$ a $\pi$-ample $\mathbb{R}$-divisor on $X$ such that $K_{X}+\Delta+A$ is globally $\mathbb{R}$-Cartier and $\pi$-pseudo-effective. 
Suppose that ${\rm NNef}(K_{X}+\Delta+A/Z) \cap {\rm Nlc}(X,\Delta) \cap \pi^{-1}(W)  = \emptyset$. 
Suppose in addition that $(K_{X}+\Delta+A)|_{{\rm Nlc}(X,\Delta)}$, which we think of an $\mathbb{R}$-line bundle on ${\rm Nlc}(X,\Delta)$, is semi-ample over a neighborhood of $W$. 
We put $(X_{1},B_{1}):=(X,\Delta+A)$. 
Then, after shrinking $Z$ around $W$, there exists a diagram
$$
\xymatrix{
(X_{1},B_{1})\ar@{-->}[r]&\cdots \ar@{-->}[r]& (X_{i},B_{i})\ar@{-->}[rr]\ar[dr]_{\varphi_{i}}&& (X_{i+1},B_{i+1})\ar[dl]^{\varphi'_{i}} \ar@{-->}[r]&\cdots\ar@{-->}[r]&(X_{m},B_{m}) \\
&&&V_{i}
}
$$
satisfying the following.
\begin{itemize}
\item
For each $1 \leq i <m$, the diagram $(X_{i},B_{i}) \overset{\varphi_{i}}{\to} V_{i} \overset{\varphi'_{i}}{\leftarrow} (X_{i+1},B_{i+1})$ is the standard step of a $(K_{X}+B)$-MMP over $Z$ around $W$, that is, $\varphi_{i}$ is bimeromorphic, $\rho(X_{i}/Z; W) - \rho(V_{i}/Z; W)=1$, $\varphi'_{i}$ is small bimeromorphic, and $-(K_{X_{i}}+B_{i})$ and $K_{X_{i+1}}+B_{i+1}$ are ample over $V_{i}$, 
\item
the MMP is represented by bimeromorphic contractions (see Definition \ref{def--mmp-represent-bimerocont}), 
\item
the non-biholomorphic locus of the MMP is disjoint from ${\rm Nlc}(X,\Delta)$, and
\item
$K_{X_{m}}+B_{m}$ is semi-ample over $Z$. 
\end{itemize}
Moreover, if $X$ is $\mathbb{Q}$-factorial over $W$, then all $X_{i}$ are also $\mathbb{Q}$-factorial over $W$. 
\end{thm}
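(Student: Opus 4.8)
The strategy is to run a suitable $(K_{X}+\Delta+A)$-MMP over $Z$ around $W$ for the normal pair $(X,\Delta)$ with scaling of a $\pi$-ample divisor, to observe that it is an isomorphism near ${\rm Nlc}(X,\Delta)$ at every step, to prove that it terminates, and to upgrade the resulting nef model to a semi-ample one. After replacing $Z$ by a Stein open neighborhood of $W$ we may assume ${\rm NNef}(K_{X}+\Delta+A/Z)\cap{\rm Nlc}(X,\Delta)=\emptyset$ and that $(K_{X}+\Delta+A)|_{{\rm Nlc}(X,\Delta)}$ is semi-ample over $Z$; in particular $K_{X_{1}}+B_{1}=K_{X}+\Delta+A$ is nef over $Z$ on an open neighborhood of ${\rm Nlc}(X,\Delta)$. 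I would give ${\rm Nlc}(X,\Delta)$ its natural quasi-log complex analytic structure attached to the non-lc ideal sheaf, as in \cite{fujino-quasi-log-analytic}, and fix a $\pi$-ample $\mathbb{R}$-divisor $C$ with $K_{X_{1}}+B_{1}+C$ nef over $Z$ around $W$.

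Next I would run a $(K_{X_{1}}+B_{1})$-MMP over $Z$ around $W$ with scaling of $C$, in the sense of the MMP for normal pairs. Each step contracts a $(K_{X_{i}}+B_{i})$-negative extremal ray not contained in ${\rm Nlc}(X,\Delta)$; since $A$ is $\pi$-ample, the cone and contraction theorems for normal pairs in the complex analytic setting apply (\cite{fujino-analytic-conethm}, \cite{fujino-quasi-log-analytic}), and the required flips exist by the analytic BCHM-type results (\cite{fujino-analytic-bchm}, \cite{eh-analytic-mmp}) — the flipping loci are disjoint from ${\rm Nlc}(X,\Delta)$, where $(X_{i},\Delta_{i})$ is log canonical and $B_{i}$ carries the big strict transform of $A$, so these flips reduce to the known log canonical (big boundary) case. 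The crucial observation is that every contracted extremal ray, being $(K_{X_{i}}+B_{i})$-negative, lies in ${\rm NNef}(K_{X_{i}}+B_{i}/Z)$, which — inductively, since each preceding step is an isomorphism on a neighborhood of it — is disjoint from the image of ${\rm Nlc}(X,\Delta)$. Hence every step is an isomorphism near ${\rm Nlc}(X,\Delta)$, so the non-biholomorphic locus of the MMP is disjoint from ${\rm Nlc}(X,\Delta)$, the quasi-log structure on ${\rm Nlc}(X,\Delta)$ and the semi-ampleness of $(K_{X_{i}}+B_{i})|_{{\rm Nlc}(X,\Delta)}$ are carried along unchanged, and, by the standard argument, $\mathbb{Q}$-factoriality over $W$ is preserved whenever $X$ has it.

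The heart of the proof is termination. Since $B_{i}$ dominates the big $\mathbb{R}$-divisor $A_{i}$ (the strict transform of the $\pi$-ample $A$), the pair is log canonical away from ${\rm Nlc}(X,\Delta)$, and nothing happens near ${\rm Nlc}(X,\Delta)$ at any step, I would deduce termination from the complex analytic analog — built on \cite{fujino-analytic-bchm}, \cite{eh-analytic-mmp}, \cite{eh-analytic-mmp-2} — of the termination of the MMP with scaling of an ample divisor for pseudo-effective log canonical pairs whose boundary contains a big $\mathbb{R}$-divisor. Concretely, the scaling numbers $\lambda_{i}$ are non-increasing, and for a suitable $t$ below their limit all sufficiently late steps are simultaneously steps of an MMP with scaling for the pair obtained by absorbing $tC_{i}$ and a small part of $A_{i}$ into the boundary (legitimate away from ${\rm Nlc}(X,\Delta)$, where the pair is log canonical); that latter MMP terminates by the big-boundary theory, and being frozen near ${\rm Nlc}(X,\Delta)$ it meets no special-termination obstruction there. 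The delicate point — and why this is the main obstacle — is that in the analytic category one cannot pass to a projective base and the non-lc locus blocks the usual dlt-modification reductions; both difficulties are circumvented precisely because the MMP is an isomorphism near ${\rm Nlc}(X,\Delta)$ throughout, which lets the termination arguments of \cite{has-mmp-normal-pair} be transplanted into the framework of \cite{fujino-analytic-bchm} and its sequels. This yields some $(X_{m},B_{m})$ with $K_{X_{m}}+B_{m}$ nef over $Z$.

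Finally, on $(X_{m},B_{m})$ the divisor $K_{X_{m}}+B_{m}$ is nef over $Z$, its restriction to ${\rm Nlc}(X,\Delta)$ — carried along isomorphically — is semi-ample over $Z$, and $B_{m}$ contains a big $\mathbb{R}$-divisor. A standard perturbation replacing this big part by an ample divisor plus a small effective divisor supported away from ${\rm Nlc}(X,\Delta)$ turns $(X_{m},B_{m})$ into a normal pair with the same non-lc locus for which $K_{X_{m}}+B_{m}$ minus the log canonical divisor is ample, so the base point free theorem for normal pairs (equivalently, for quasi-log complex analytic spaces) in the analytic setting (\cite{fujino-quasi-log-analytic}) gives that $K_{X_{m}}+B_{m}$ is semi-ample over $Z$. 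Assembling the above — each step is a standard step of a $(K_{X}+B)$-MMP over $Z$ around $W$, the MMP is represented by bimeromorphic contractions (immediate, since no divisor is ever extracted), the non-biholomorphic locus is disjoint from ${\rm Nlc}(X,\Delta)$, $K_{X_{m}}+B_{m}$ is semi-ample over $Z$, and $\mathbb{Q}$-factoriality over $W$ is preserved — yields the theorem.
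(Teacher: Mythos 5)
Your overall shape (run a $(K_{X_1}+B_1)$-MMP with scaling that is a biholomorphism near ${\rm Nlc}(X,\Delta)$, terminate it, then apply the base point free theorem for quasi-log structures) matches the paper's strategy, and your observation that contracted extremal curves lie in the non-nef locus and hence avoid ${\rm Nlc}(X,\Delta)$ over $W$ is essentially Theorem \ref{thm--nonnef-negativecurve} combined with Lemma \ref{lem--extremal-ray}. However, the termination step contains a genuine gap. You deduce termination from ``the complex analytic analog of the termination of the MMP with scaling of an ample divisor for pseudo-effective log canonical pairs whose boundary contains a big $\mathbb{R}$-divisor.'' No such unconditional result exists, in the algebraic or analytic setting: for lc (as opposed to klt) pairs with big boundary, termination with scaling is exactly obstructed by the behavior of the MMP along the lc centers, i.e.\ the non-klt but lc locus. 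The hypotheses of the theorem control only ${\rm Nlc}(X,\Delta)$ (disjointness from the non-nef locus and semi-ampleness of the restriction), not ${\rm Nklt}$, so ``being frozen near ${\rm Nlc}$'' does not remove the special-termination obstruction, which lives on the lc centers away from ${\rm Nlc}$. This is precisely why the paper (following \cite{has-mmp-normal-pair}) develops the klt-locus theory of Section \ref{sec--mmp-klt} (non-vanishing along ${\rm Nklt}$ via the vanishing theorem \ref{thm--vanishing-quasi-log}, Theorems \ref{thm--non-vanishing-kltmmp-main} and \ref{thm--klt-minmodeltheory-main}), then handles the lc centers by special termination and adjunction for quasi-log structures (Theorem \ref{thm--mmp-lclocus-main}), and reduces the non-qklt case to the klt-locus case by the canonical bundle formula and Fujino's perturbation (Lemmas \ref{lem--can-bundle-formula} and \ref{lem--mmp-fibration-kltlocus}). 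None of these ingredients, or any substitute for them, appears in your proposal; the citation of a big-boundary termination theorem hides the entire content of the result.

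A secondary, analytic-specific point: you treat existence of the steps (``the flips reduce to the known log canonical case'') as automatic. The lc flip is only available over the complement of the image of ${\rm Nlc}(X,\Delta)$, so to obtain a global modification over $V_i$ one must glue it with the identity near ${\rm Nlc}(X,\Delta)$, and in the complex analytic category one must also produce termination of the auxiliary MMPs over a neighborhood of a compact set by patching local good minimal models; this is the content of Lemmas \ref{lem--bimerocont-small}, \ref{lem--termination-mmp-generalized}, \ref{lem--mmp-normal-pair-run} and Theorem \ref{thm--mmpstep-quasi-log} (using Remmert--Stein and a compactness argument over $\widetilde{W}$), which the paper singles out as the step where the algebraic argument does not transfer verbatim. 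Your proposal should at least acknowledge and carry out this gluing, and, much more importantly, supply an actual termination argument along the non-klt locus as described above.
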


Following results are corollaries of the main result. 

\begin{cor}[= Corollary \ref{cor--nonvan-lc-main}]\label{cor--nonvan-intro}
Let $\pi \colon X \to Z$ be a projective morphism from a normal analytic variety $X$ to a Stein space $Z$, and let $W \subset Z$ be a compact subset such that $\pi$ and $W$ satisfy (P). 
Let $(X,\Delta)$ be a normal pair and $A$ a $\pi$-ample $\mathbb{R}$-divisor on $X$ such that $K_{X}+\Delta+A$ is globally $\mathbb{R}$-Cartier and $\pi$-pseudo-effective. 
Suppose that ${\rm NNef}(K_{X}+\Delta+A/Z) \cap {\rm Nlc}(X,\Delta) \cap \pi^{-1}(W)  = \emptyset$. 
Suppose in addition that $(K_{X}+\Delta+A)|_{{\rm Nlc}(X,\Delta)}$, which we think of an $\mathbb{R}$-line bundle on ${\rm Nlc}(X,\Delta)$, is semi-ample over a neighborhood of $W$. 
Then we have ${\rm Bs}|K_{X}+\Delta+A/Z|_{\mathbb{R}} \cap {\rm Nlc}(X,\Delta)\cap \pi^{-1}(W) = \emptyset$. 
\end{cor}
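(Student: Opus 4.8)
The plan is to deduce the corollary directly from Theorem~\ref{thm--termination-lcmmp-intro}: run the MMP produced there, observe that its output is base point free over $Z$ near $W$, and transport this base point freeness back to $X$ using the fact that the whole MMP is a biholomorphism in a neighborhood of $\mathrm{Nlc}(X,\Delta)$. First I would set $B:=\Delta+A$, so that $(X_1,B_1)=(X,B)$ and $K_{X_1}+B_1=K_X+\Delta+A$, and apply Theorem~\ref{thm--termination-lcmmp-intro}: after shrinking $Z$ around $W$, it gives a finite sequence of steps of a $(K_X+B)$-MMP over $Z$ around $W$,
\[
(X_1,B_1)\dashrightarrow\cdots\dashrightarrow (X_m,B_m),
\]
whose composite bimeromorphic map $\phi\colon X_1\dashrightarrow X_m$ is a bimeromorphic contraction (Definition~\ref{def--mmp-represent-bimerocont}), whose non-biholomorphic locus is disjoint from $\mathrm{Nlc}(X,\Delta)$, and with $K_{X_m}+B_m$ semi-ample over $Z$. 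Write $\pi_m\colon X_m\to Z$ for the structure morphism. Shrinking $Z$ around $W$ once more, semi-ampleness gives $\mathrm{Bs}|K_{X_m}+B_m/Z|_{\mathbb R}=\emptyset$; in particular this base locus misses $\pi_m^{-1}(W)$.

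Next I would exploit that $\phi$ restricts to a biholomorphism over an open neighborhood $U$ of $\mathrm{Nlc}(X,\Delta)$ in $X$, compatibly with $\pi$ and $\pi_m$, identifying $(K_X+\Delta+A)|_U$ with the corresponding restriction of $K_{X_m}+B_m$ (using that $B_m$ is the strict transform of $B$ and that $K_{X_m}+B_m=\phi_*(K_X+\Delta+A)$). To transport base point freeness across $\phi$ I would pass to a common resolution $p\colon W_0\to X$, $q\colon W_0\to X_m$ of $\phi$, say a resolution of the closure of its graph, chosen so that $p$ and $q$ are biholomorphisms over the biholomorphic locus of $\phi$ with $q=\phi\circ p$ there. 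Then $E:=p^*(K_X+\Delta+A)-q^*(K_{X_m}+B_m)$ is a $q$-exceptional $\mathbb{R}$-divisor supported away from $p^{-1}$ of the biholomorphic locus, so every non-$p$-exceptional prime component of $E$ is the strict transform of a $\phi$-exceptional prime divisor on $X$. Since the MMP is $(K_X+B)$-negative, the negativity lemma gives $E\ge 0$; hence $p_*E\ge 0$ and $\mathrm{Supp}(p_*E)$ lies in the non-biholomorphic locus of $\phi$, and therefore is disjoint from $\mathrm{Nlc}(X,\Delta)$.

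With these in hand I would finish as follows. Fix any $x\in\mathrm{Nlc}(X,\Delta)\cap\pi^{-1}(W)$; since $x$ lies in the biholomorphic locus of $\phi$ there is a unique $w\in W_0$ with $p(w)=x$, and $(\pi_m\circ q)(w)=\pi(x)\in W$. As $\mathrm{Bs}|K_{X_m}+B_m/Z|_{\mathbb R}=\emptyset$ the pullback $q^*(K_{X_m}+B_m)$ is semi-ample over $Z$, so $w\notin\mathrm{Bs}|q^*(K_{X_m}+B_m)/Z|_{\mathbb R}$, and there is $0\le G\sim_{\mathbb R,Z}q^*(K_{X_m}+B_m)$ with $w\notin\mathrm{Supp}(G)$. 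Then $G+E\sim_{\mathbb R,Z}p^*(K_X+\Delta+A)$, and pushing forward by $p$ yields $0\le p_*G+p_*E\sim_{\mathbb R,Z}K_X+\Delta+A$; moreover $x\notin\mathrm{Supp}(p_*E)$ by the previous paragraph, and $x\notin\mathrm{Supp}(p_*G)$ because $p$ is a biholomorphism near $x$ and $w\notin\mathrm{Supp}(G)$. Hence $x\notin\mathrm{Bs}|K_X+\Delta+A/Z|_{\mathbb R}$, and since $x$ was arbitrary the corollary follows. I expect the only real work to be in the transfer step of the second paragraph, namely controlling $E$ on the common resolution and descending $\mathbb{R}$-linear systems along $p$; but this should be routine given the negativity lemma and the established theory of the complex analytic MMP, and it is made easy near $\mathrm{Nlc}(X,\Delta)$ precisely because $\phi$ is a biholomorphism there, where $E$ vanishes.
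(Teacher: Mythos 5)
Your proposal is correct and follows essentially the same route as the paper, whose entire proof of Corollary \ref{cor--nonvan-lc-main} is that it "immediately follows from Theorem \ref{thm--termination-lcmmp-main}": your common-resolution/negativity-lemma transfer of base-point-freeness back along the MMP, which is a biholomorphism near ${\rm Nlc}(X,\Delta)$, is exactly the routine content being suppressed there (with the globally $\mathbb{R}$-Cartier hypothesis and Remark \ref{rem--stable-base-locus-restriction} justifying that it suffices to work after shrinking $Z$ around $W$). The only cosmetic slip is that $p$ need not be a biholomorphism over the biholomorphic locus of $\phi$ when $X$ is singular there (so the preimage $w$ of $x$ need not be unique); this is repaired by taking $W_{0}$ to be the normalized graph, or simply by choosing $G=q^{*}G_{m}$ for an effective $G_{m}\sim_{\mathbb{R},Z}K_{X_{m}}+B_{m}$ avoiding $\phi(x)$, which handles all of $p^{-1}(x)$ at once since $q=\phi\circ p$ near that fiber.
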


\begin{cor}[= Corollary \ref{cor--mmp-from-nonvan-fullgeneral}]\label{cor--mmp-from-nonvan-intro}
Let $\pi \colon X \to Z$ be a projective morphism from a normal analytic variety $X$ to a Stein space $Z$, and let $W \subset Z$ be a compact subset such that $\pi$ and $W$ satisfy (P). 
Let $(X,\Delta)$ be a normal pair and $A$ a $\pi$-ample $\mathbb{R}$-divisor on $X$ such that $K_{X}+\Delta+A$ is globally $\mathbb{R}$-Cartier and $\pi$-pseudo-effective. 
Suppose that ${\rm Bs}|K_{X}+\Delta+A/Z|_{\mathbb{R}} \cap {\rm Nlc}(X,\Delta) \cap \pi^{-1}(W)  = \emptyset$. 
We put $(X_{1},B_{1}):=(X,\Delta+A)$. 
Then, after shrinking $Z$ around $W$, there exists a sequence of steps of a $(K_{X_{1}}+B_{1})$-MMP over $Z$ around $W$
$$(X_{1},B_{1})\dashrightarrow \cdots \dashrightarrow (X_{i},B_{i}) \dashrightarrow (X_{i+1},B_{i+1})\dashrightarrow (X_{m},B_{m}),$$
which is represented by bimeromorphic contractions, satisfying the following.
\begin{itemize}
\item
The non-biholomorphic locus of the MMP is disjoint from ${\rm Nlc}(X,\Delta)$, and
\item
$K_{X_{m}}+B_{m}$ is semi-ample over $Z$. 
\end{itemize}
Moreover, if $X$ is $\mathbb{Q}$-factorial over $W$, then all $X_{i}$ in the MMP are also $\mathbb{Q}$-factorial over $W$. 
\end{cor}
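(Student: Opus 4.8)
The plan is to derive Corollary~\ref{cor--mmp-from-nonvan-intro} directly from Theorem~\ref{thm--termination-lcmmp-intro}: the hypothesis imposed here on the base locus of $|K_{X}+\Delta+A/Z|_{\mathbb{R}}$ is strictly stronger than the two hypotheses required in that theorem, so once the two implications are checked, the corollary reduces to quoting the theorem. Write $D:=K_{X}+\Delta+A=K_{X_{1}}+B_{1}$; by assumption $D$ is globally $\mathbb{R}$-Cartier, $\pi$-pseudo-effective, and ${\rm Bs}|D/Z|_{\mathbb{R}}\cap{\rm Nlc}(X,\Delta)\cap\pi^{-1}(W)=\emptyset$. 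One may assume ${\rm Nlc}(X,\Delta)\cap\pi^{-1}(W)\neq\emptyset$ (otherwise $(X,\Delta)$ is lc over a neighborhood of $W$ and the conclusion is part of the lc minimal model theory of \cite{fujino-analytic-bchm}, \cite{eh-analytic-mmp}, \cite{eh-analytic-mmp-2}); in that case ${\rm Bs}|D/Z|_{\mathbb{R}}$ is a proper analytic subset, so after shrinking $Z$ the $\mathbb{R}$-linear system $|D/Z|_{\mathbb{R}}$ is nonempty.

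First I would record the two reductions. On one hand, the non-nef locus of a $\pi$-pseudo-effective $\mathbb{R}$-divisor is always contained in the base locus of its $\mathbb{R}$-linear system, ${\rm NNef}(D/Z)\subseteq{\rm Bs}|D/Z|_{\mathbb{R}}$ (it is a union, over $\pi$-ample $H$ and $\varepsilon\downarrow 0$, of the sets ${\rm Bs}|D+\varepsilon H/Z|_{\mathbb{R}}\subseteq{\rm Bs}|D/Z|_{\mathbb{R}}\cup{\rm Bs}|\varepsilon H/Z|_{\mathbb{R}}={\rm Bs}|D/Z|_{\mathbb{R}}$, using that ${\rm Bs}|\varepsilon H/Z|_{\mathbb{R}}=\emptyset$ after shrinking $Z$ since $Z$ is Stein), whence ${\rm NNef}(D/Z)\cap{\rm Nlc}(X,\Delta)\cap\pi^{-1}(W)=\emptyset$. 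On the other hand, shrinking $Z$ around $W$ we may assume ${\rm Bs}|D/Z|_{\mathbb{R}}\cap{\rm Nlc}(X,\Delta)=\emptyset$, which says precisely that $D|_{{\rm Nlc}(X,\Delta)}$ has empty base locus over $Z$; the base-point-free theorem for quasi-log complex analytic spaces (\cite{fujino-quasi-log-analytic}), applied to the structure that $(X,\Delta)$ induces on ${\rm Nlc}(X,\Delta)$, together with the coherence and shrinking arguments that (P) makes available, then upgrades this to semi-ampleness of $D|_{{\rm Nlc}(X,\Delta)}$ over a neighborhood of $W$. Thus both hypotheses of Theorem~\ref{thm--termination-lcmmp-intro} hold. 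Applying it to $(X,\Delta)$ and $A$ produces, after shrinking $Z$ around $W$, the diagram displayed there; discarding the intermediate spaces $V_{i}$, the resulting sequence $(X_{1},B_{1})\dashrightarrow\cdots\dashrightarrow(X_{m},B_{m})$ is by construction a sequence of steps of a $(K_{X_{1}}+B_{1})$-MMP over $Z$ around $W$, it is represented by bimeromorphic contractions, its non-biholomorphic locus is disjoint from ${\rm Nlc}(X,\Delta)$, and $K_{X_{m}}+B_{m}$ is semi-ample over $Z$; the last clause of Theorem~\ref{thm--termination-lcmmp-intro} also gives that every $X_{i}$ is $\mathbb{Q}$-factorial over $W$ whenever $X$ is. This is exactly the assertion of Corollary~\ref{cor--mmp-from-nonvan-intro}.

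The main obstacle is the second reduction above, namely the passage from ``${\rm Bs}|D/Z|_{\mathbb{R}}$ disjoint from ${\rm Nlc}(X,\Delta)$ over $W$'' to ``$D|_{{\rm Nlc}(X,\Delta)}$ semi-ample over a neighborhood of $W$''. Emptiness of the base locus is only a pointwise freeness statement, whereas semi-ampleness requires manufacturing an actual contraction, so one genuinely needs the relative base-point-free theorem over the Stein base $Z$ together with the finiteness input supplied by (P); this is also the only place where the gap between the hypothesis of Corollary~\ref{cor--mmp-from-nonvan-intro} and that of Theorem~\ref{thm--termination-lcmmp-intro} is felt. Everything else is a direct appeal to the main theorem, with no further MMP to run.
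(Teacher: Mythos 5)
Your overall route is the same as the paper's: Corollary \ref{cor--mmp-from-nonvan-fullgeneral} is deduced by checking the two hypotheses of Theorem \ref{thm--termination-lcmmp-main} and then quoting that theorem, and your first reduction, ${\rm NNef}(K_X+\Delta+A/Z)\subset {\rm Bs}|K_X+\Delta+A/Z|_{\mathbb{R}}$, is exactly Remark \ref{rem--nonnef-relation}. The gap is in the second reduction, which you yourself identify as the only nontrivial point. You propose to obtain the semi-ampleness of $(K_X+\Delta+A)|_{{\rm Nlc}(X,\Delta)}$ over a neighborhood of $W$ from the base point free theorem for quasi-log complex analytic spaces applied to the quasi-log structure that $(X,\Delta)$ induces on ${\rm Nlc}(X,\Delta)$. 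That theorem cannot do this job: in \cite[Theorem 6.1]{fujino-quasi-log-analytic} (the result invoked in Theorem \ref{thm--abundance-quasi-log}) the semi-ampleness of the restriction to the non-qlc locus is an \emph{input}, not an output, and for the structure induced on $X':={\rm Nqlc}(X,K_X+\Delta)={\rm Nlc}(X,\Delta)$ by adjunction (\cite[Theorem 4.4]{fujino-quasi-log-analytic}) one has ${\rm Nqlc}(X',\omega|_{X'})=X'$, so the hypothesis you would have to feed in is precisely the statement you are trying to prove; the application is circular. Nor does any base point free theorem take ``empty stable base locus of an $\mathbb{R}$-divisor'' as a hypothesis. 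Note also that you cannot shortcut by choosing a single member $E\in|K_X+\Delta+A/Z|_{\mathbb{R}}$ with ${\rm Supp}\,E\cap{\rm Nlc}(X,\Delta)=\emptyset$: already for an ample divisor $D$ and a curve $V$ with $(D\cdot V)>0$, every member of $|D|_{\mathbb{R}}$ meets $V$ even though ${\rm Bs}|D|_{\mathbb{R}}=\emptyset$, so a genuine argument with finitely many members and the compactness of ${\rm Nlc}(X,\Delta)\cap\pi^{-1}(W)$ is needed.

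The bridge the paper relies on is not the quasi-log machinery but the elementary theory of relative $\mathbb{R}$-linear systems recorded in Section \ref{sec--preliminaries}, namely the complex analytic analogs of \cite[Lemmas 2.4, 2.5, Theorem 2.6]{has-mmp-normal-pair} together with Remark \ref{rem--stable-base-locus-restriction}: after shrinking $Z$ around $W$, the relations $K_X+\Delta+A\sim_{\mathbb{R},Z}E$ for finitely many members whose supports jointly avoid the compact set ${\rm Nlc}(X,\Delta)\cap\pi^{-1}(W)$ are what exhibit $(K_X+\Delta+A)|_{{\rm Nlc}(X,\Delta)}$, as an $\mathbb{R}$-line bundle in the sense of Definition \ref{defn--R-line-bundle-an-sp}, as semi-ample over a neighborhood of $W$. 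With that step supplied (or at least correctly attributed), the rest of your argument coincides with the paper's one-line proof of Corollary \ref{cor--mmp-from-nonvan-fullgeneral}; as it stands, the crucial step rests on a tool that does not apply.
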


\begin{cor}[= Corollary \ref{cor--mmp-lc-strictnefthreshold-main}]\label{cor--mmp-lc-strictnefthreshold-intro}
Let $\pi \colon X \to Z$ be a projective morphism from a normal analytic variety $X$ to a Stein space $Z$, and let $W \subset Z$ be a compact subset such that $\pi$ and $W$ satisfy (P). 
Let $(X,\Delta)$ be a normal pair such that $K_{X}+\Delta$ is $\pi$-pseudo-effective and ${\rm NNef}(K_{X}+\Delta/Z) \cap {\rm Nlc}(X,\Delta) \cap \pi^{-1}(W)  = \emptyset$. 
Let $A$ be a $\pi$-ample $\mathbb{R}$-divisor on $X$. 
Then there exists a sequence of steps of a $(K_{X}+\Delta)$-MMP over $Z$ around $W$ with scaling of $A$ 
$$
\xymatrix{
(X_{1},\Delta_{1})\ar@{-->}[r]&\cdots \ar@{-->}[r]& (X_{i},\Delta_{i})\ar@{-->}[rr]\ar[dr]_{\varphi_{i}}&& (X_{i+1},\Delta_{i+1})\ar[dl]^{\varphi'_{i}} \ar@{-->}[r]&\cdots, \\
&&&V_{i}
}
$$
where $(X_{i},\Delta_{i}) \to V_{i} \leftarrow (X_{i+1},\Delta_{i+1})$ is a step of the $(K_{X}+\Delta)$-MMP over $Z$ around $W$, such that
\begin{itemize}
\item
the non-biholomorphic locus of the MMP is disjoint from ${\rm Nlc}(X,\Delta)$, 
\item
$\rho(X_{i}/Z;W)-\rho(V_{i}/Z;W)=1$ for every $i \geq 1$, and
\item
if we put 
$$\lambda_{i}:={\rm inf}\{\mu \in \mathbb{R}_{\geq 0}\,| \,\text{$K_{X_{i}}+\Delta_{i}+\mu A_{i}$ is nef over $W$} \}$$  
for each $i \geq 1$, then ${\rm lim}_{i \to \infty}\lambda_{i}=0$. 
\end{itemize}
Moreover, if $X$ is $\mathbb{Q}$-factorial over $W$, then all $X_{i}$ are also $\mathbb{Q}$-factorial over $W$. 
\end{cor}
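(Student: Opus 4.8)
The plan is to run a $(K_{X}+\Delta)$-MMP over $Z$ around $W$ with scaling of $A$, using the cone, contraction and flip theorems for normal pairs along the log canonical locus developed in the earlier sections, and then to force $\lim_{i\to\infty}\lambda_{i}=0$ by comparing with a good minimal model of $(X,\Delta+tA)$ for small $t>0$, whose existence is furnished by Theorem \ref{thm--termination-lcmmp-intro} (equivalently by Corollaries \ref{cor--nonvan-intro} and \ref{cor--mmp-from-nonvan-intro}).

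First one notes that, after shrinking $Z$ around $W$, the hypothesis ${\rm NNef}(K_{X}+\Delta/Z)\cap{\rm Nlc}(X,\Delta)\cap\pi^{-1}(W)=\emptyset$ forces $(K_{X}+\Delta)|_{{\rm Nlc}(X,\Delta)}$ to be nef over $W$, while $A|_{{\rm Nlc}(X,\Delta)}$ is ample over $W$. I would then build the MMP inductively: given $(X_{i},\Delta_{i})$ with $A_{i}$ the strict transform of $A$, set $\lambda_{i}={\rm inf}\{\mu\geq0\mid\text{$K_{X_{i}}+\Delta_{i}+\mu A_{i}$ is nef over $W$}\}$, which is finite since $K_{X_{i}}+\Delta_{i}+\mu A_{i}$ is $\pi$-ample for $\mu\gg0$; if $\lambda_{i}=0$ stop, otherwise $K_{X_{i}}+\Delta_{i}+\lambda_{i}A_{i}$ is nef but not ample over $W$ and the cone theorem produces a $(K_{X_{i}}+\Delta_{i})$-negative extremal ray $R_{i}$ over $W$ with $(K_{X_{i}}+\Delta_{i}+\lambda_{i}A_{i})\cdot R_{i}=0$. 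Carrying along inductively that $(K_{X_{i}}+\Delta_{i})|_{{\rm Nlc}(X_{i},\Delta_{i})}$ is nef over $W$ and $A_{i}|_{{\rm Nlc}(X_{i},\Delta_{i})}$ is ample over $W$, the restriction $(K_{X_{i}}+\Delta_{i}+\lambda_{i}A_{i})|_{{\rm Nlc}(X_{i},\Delta_{i})}$ is ample over $W$; since it vanishes on $R_{i}$, the ray and the locus contracted by its contraction avoid ${\rm Nlc}(X_{i},\Delta_{i})$. Hence the contraction (and the flip, in the small case) exists by the earlier sections, is represented by a bimeromorphic contraction, has non-biholomorphic locus disjoint from ${\rm Nlc}$, realizes a standard step with $\rho(X_{i}/Z;W)-\rho(V_{i}/Z;W)=1$, preserves $\mathbb{Q}$-factoriality over $W$, and carries ${\rm Nlc}(X_{i},\Delta_{i})$, with the restrictions of $K_{X_{i}}+\Delta_{i}$ and $A_{i}$, biholomorphically onto $(X_{i+1},\Delta_{i+1})$; all inductive data are inherited, and the $\lambda_{i}$ are non-increasing. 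If the process stops, the last $\lambda_{m}$ is $0$ and we are done.

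So suppose the MMP is infinite with $\lambda_{\infty}:=\lim_{i}\lambda_{i}>0$, and put $t=\lambda_{\infty}/2$. For every $i$, the relations $(K_{X_{i}}+\Delta_{i})\cdot R_{i}<0=(K_{X_{i}}+\Delta_{i}+\lambda_{i}A_{i})\cdot R_{i}$ with $\lambda_{i}>0$ give $A_{i}\cdot R_{i}>0$, so $(K_{X_{i}}+\Delta_{i}+tA_{i})\cdot R_{i}=-(\lambda_{i}-t)(A_{i}\cdot R_{i})<0$, while $(K_{X_{i}}+\Delta_{i}+tA_{i})+(\lambda_{i}-t)A_{i}$ is nef over $W$, vanishes on $R_{i}$, and $\lambda_{i}-t$ is the nef threshold of $K_{X_{i}}+\Delta_{i}+tA_{i}$ with respect to $A_{i}$. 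Thus the same steps form a $(K_{X}+\Delta+tA)$-MMP over $Z$ around $W$ with scaling of $A$ whose scaling numbers $\lambda_{i}-t$ stay $\geq\lambda_{\infty}/2>0$, so they do not tend to $0$. On the other hand $(X,\Delta+tA)$ satisfies the hypotheses of Theorem \ref{thm--termination-lcmmp-intro}: $K_{X}+\Delta+tA$ is $\pi$-pseudo-effective, ${\rm NNef}(K_{X}+\Delta+tA/Z)\subseteq{\rm NNef}(K_{X}+\Delta/Z)$ is disjoint from ${\rm Nlc}(X,\Delta)\cap\pi^{-1}(W)$, and $(K_{X}+\Delta+tA)|_{{\rm Nlc}(X,\Delta)}$ is ample over $W$ and hence semi-ample over a neighborhood of $W$; consequently $(X,\Delta+tA)$ has a good minimal model over $Z$. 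By the termination of the MMP with scaling along the log canonical locus under these hypotheses (whose proof uses exactly the existence of such a good minimal model), every $(K_{X}+\Delta+tA)$-MMP over $Z$ around $W$ with scaling of $A$ has scaling numbers tending to $0$ --- a contradiction. Hence $\lambda_{\infty}=0$, and the MMP constructed above has all the asserted properties, including the ``Moreover'' about $\mathbb{Q}$-factoriality.

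The hard part is the termination of the MMP with scaling along the log canonical locus invoked at the end, i.e.\ the passage from ``$(X,\Delta+tA)$ has a good minimal model for all small $t>0$'' to ``$\lambda_{i}\to0$''; this is where the heavier machinery of the earlier sections (existence of minimal models, special termination) really enters, and it is the step I would expect to demand the most work. The other ingredients --- running the MMP with scaling via the cone and contraction theorems, keeping its non-biholomorphic locus off ${\rm Nlc}(X,\Delta)$ via the ampleness of $(K_{X_{i}}+\Delta_{i}+\lambda_{i}A_{i})|_{{\rm Nlc}}$, propagating the hypotheses along the MMP, and the claim about $\mathbb{Q}$-factoriality --- should be comparatively routine, modulo the standard care with shrinkings of $Z$ around $W$.
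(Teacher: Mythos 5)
Your overall strategy --- run a $(K_X+\Delta)$-MMP with scaling of $A$ whose non-biholomorphic locus avoids ${\rm Nlc}(X,\Delta)$, then rule out $\lambda_\infty:=\lim_i\lambda_i>0$ by comparing with the pair $(X,\Delta+tA)$ --- is the right circle of ideas, but the last step has a genuine gap. You choose $t=\lambda_\infty/2$, so the re-read $(K_X+\Delta+tA)$-MMP has scaling numbers $\lambda_i-t\geq\lambda_\infty/2$ bounded away from zero, and you then invoke the assertion that \emph{every} $(K_X+\Delta+tA)$-MMP over $Z$ around $W$ with scaling of $A$ has scaling numbers tending to $0$. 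No such unconditional statement is available in this paper (nor in its algebraic counterpart): all the relevant termination results (Theorem \ref{thm--from-minimodel-to-termi}, Lemma \ref{lem--termination-mmp-generalized}, Lemma \ref{lem--mmp-fibration-kltlocus}, Theorem \ref{thm--termination-all-lcmmp}) are conditional on $\lim_i\lambda_i=0$ and only conclude that the limit is attained, and the one unconditional statement, Theorem \ref{thm--klt-minmodeltheory-main}, also only says the limit is attained and requires the stronger hypotheses along ${\rm Nklt}(X,\Delta)$. The existence of a good minimal model of $(X,\Delta+tA)$ does not, via the results proved here, exclude an infinite MMP with scaling whose thresholds stay away from zero (e.g.\ eventually constant thresholds); ruling that out is essentially the open termination issue discussed in Remark \ref{rem--mmp-reduction}. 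So the contradiction you aim for does not materialize, and this is precisely the ``hard part'' you flagged.

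The repair is the paper's route, which reduces the corollary to Theorem \ref{thm--termination-all-lcmmp}: construct the MMP not by the bare cone theorem but via Theorem \ref{thm--mmp-nefthreshold-strict} (or Corollary \ref{cor--mmp-nomralpair-Qfacdlt}), so that in the infinite case the thresholds satisfy $\lambda_i\neq\lambda:=\lim_i\lambda_i$ (indeed $\lambda_i>\lambda_{i+1}$). If $\lambda>0$, regard the same sequence of steps as a $(K_X+\Delta+\lambda A)$-MMP with scaling of the ample divisor $\lambda A$: since $\lambda_i>\lambda$, every step is $(K_X+\Delta+\lambda A)$-negative, and the induced scaling numbers $(\lambda_i-\lambda)/\lambda$ \emph{do} tend to zero. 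The hypotheses of Theorem \ref{thm--termination-all-lcmmp} hold for $(X,\Delta)$ and $\lambda A$ by exactly the observations you already made, namely ${\rm NNef}(K_X+\Delta+\lambda A/Z)\subset{\rm NNef}(K_X+\Delta/Z)$ and the fact that $(K_X+\Delta+\lambda A)|_{{\rm Nlc}(X,\Delta)}$ is nef over $W$ plus ample, hence semi-ample over a neighborhood of $W$. The theorem then gives $\lambda_m=\lambda$ for some $m$, contradicting $\lambda_i\neq\lambda$; hence $\lambda=0$. Note that your own construction only records that the $\lambda_i$ are non-increasing, which would not yield a contradiction even with the corrected choice $t=\lambda$, so the strict-threshold property coming from Theorem \ref{thm--mmp-nefthreshold-strict} is genuinely needed.
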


\begin{cor}[= Corollary \ref{cor--finite-generation-main}]{\label{cor--finite-generation-intro}}
Let $\pi \colon X \to Z$ be a projective morphism of from a normal analytic variety  to a Stein space. 
Let $(X,\Delta)$ be a normal pair such that $\Delta$ is a $\mathbb{Q}$-divisor on $X$. 
Let $A$ be a $\pi$-ample $\mathbb{Q}$-divisor on $X$ such that $K_{X}+\Delta+A$ is globally $\mathbb{Q}$-Cartier and $\pi$-pseudo-effective and ${\rm NNef}(K_{X}+\Delta+A/Z) \cap {\rm Nlc}(X,\Delta) = \emptyset.$
 Suppose in addition that $(K_{X}+\Delta+A)|_{{\rm Nlc}(X,\Delta)}$, which we think of a $\mathbb{Q}$-line bundle on ${\rm Nlc}(X,\Delta)$, is semi-ample over $Z$. 
Then the sheaf of graded $\pi_{*}\mathcal{O}_{X}$-algebra
$$\underset{m \in \mathbb{Z}_{\geq 0}}{\bigoplus}\pi_{*}\mathcal{O}_{X}(\lfloor m(K_{X}+\Delta+A)\rfloor)$$
is locally finitely generated. 
\end{cor}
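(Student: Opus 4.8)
The plan is to deduce the corollary from Theorem~\ref{thm--termination-lcmmp-intro}, combined with two standard facts: that each step of the MMP preserves the relative section ring together with its round-down, and that the relative section ring of a semi-ample, globally $\mathbb{Q}$-Cartier divisor over a Stein base is locally finitely generated.

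First I would reduce to a local statement on $Z$. Local finite generation of a sheaf of graded $\pi_{*}\mathcal{O}_{X}$-algebras can be checked on a neighborhood of each point, so I fix $z_{0}\in Z$ and choose, using that $Z$ is Stein, a Stein compact subset $W\subset Z$ containing $z_{0}$ such that $\pi$ and $W$ satisfy (P). The hypotheses ${\rm NNef}(K_{X}+\Delta+A/Z)\cap {\rm Nlc}(X,\Delta)=\emptyset$ and the semi-ampleness of $(K_{X}+\Delta+A)|_{{\rm Nlc}(X,\Delta)}$ over $Z$ imply, a fortiori, that ${\rm NNef}(K_{X}+\Delta+A/Z)\cap {\rm Nlc}(X,\Delta)\cap\pi^{-1}(W)=\emptyset$ and that $(K_{X}+\Delta+A)|_{{\rm Nlc}(X,\Delta)}$ is semi-ample over a neighborhood of $W$, so Theorem~\ref{thm--termination-lcmmp-intro} applies with $(X_{1},B_{1})=(X,\Delta+A)$. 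After shrinking $Z$ around $W$ we obtain a sequence of standard steps of a $(K_{X_{1}}+B_{1})$-MMP over $Z$ around $W$, say $(X_{1},B_{1})\dashrightarrow\cdots\dashrightarrow(X_{t},B_{t})$, with $D_{t}:=K_{X_{t}}+B_{t}$ semi-ample over $Z$; since $B_{1}=\Delta+A$ is a $\mathbb{Q}$-divisor and $D_{1}:=K_{X_{1}}+B_{1}$ is globally $\mathbb{Q}$-Cartier, all the $D_{i}:=K_{X_{i}}+B_{i}$ are globally $\mathbb{Q}$-Cartier $\mathbb{Q}$-divisors. It then suffices to prove that $\bigoplus_{m}\pi_{*}\mathcal{O}_{X}(\lfloor m(K_{X}+\Delta+A)\rfloor)$ is finitely generated over this shrunk neighborhood of $W$.

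Next I would prove that each step $(X_{i},B_{i})\overset{\varphi_{i}}{\to}V_{i}\overset{\varphi'_{i}}{\leftarrow}(X_{i+1},B_{i+1})$ induces an isomorphism of graded $\pi_{*}\mathcal{O}_{X}$-algebras
$$\bigoplus_{m\geq 0}\pi_{*}\mathcal{O}_{X_{i}}(\lfloor m D_{i}\rfloor)\;\cong\;\bigoplus_{m\geq 0}\pi_{*}\mathcal{O}_{X_{i+1}}(\lfloor m D_{i+1}\rfloor),$$
namely the one coming from the identification of the sheaves of meromorphic functions of $X_{i}$ and $X_{i+1}$. Both sides are pushed down from $V_{i}$: since $\varphi'_{i}$ is small, $(\varphi'_{i})_{*}\mathcal{O}_{X_{i+1}}(\lfloor m D_{i+1}\rfloor)=\mathcal{O}_{V_{i}}(\lfloor m D_{V_{i}}\rfloor)$, where $D_{V_{i}}$ is the pushforward of $D_{i+1}$ (equivalently of $D_{i}$), and one checks $(\varphi_{i})_{*}\mathcal{O}_{X_{i}}(\lfloor m D_{i}\rfloor)=\mathcal{O}_{V_{i}}(\lfloor m D_{V_{i}}\rfloor)$ as well. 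In this last equality, if $\varphi_{i}$ is small it is again immediate (reflexive sheaves on normal spaces agreeing in codimension one); if $\varphi_{i}$ is divisorial, the inclusion $\subseteq$ follows by comparing orders along prime divisors of $V_{i}$, whose generic points avoid the image of the $\varphi_{i}$-exceptional divisor (of codimension $\geq2$ in $V_{i}$), while for $\supseteq$ one uses that $-D_{i}$ is $\varphi_{i}$-ample: writing $\varphi_{i}^{*}D_{V_{i}}=D_{i}+\nu E$ along the $\varphi_{i}$-exceptional prime divisor $E$, one has $\nu<0$ because $\nu E\equiv_{V_{i}}-D_{i}$ is $\varphi_{i}$-ample whereas $E\cdot C<0$ for the curves $C$ contracted by $\varphi_{i}$, and then a short estimate using $0\leq{\rm ord}_{E}(\varphi_{i}^{*}F)$ for prime divisors $F$ of $V_{i}$ together with the integrality of ${\rm ord}_{E}$ shows that any $g\in\mathcal{O}_{V_{i}}(\lfloor m D_{V_{i}}\rfloor)$ pulls back into $\mathcal{O}_{X_{i}}(\lfloor m D_{i}\rfloor)$. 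Composing over $i=1,\dots,t-1$ and pushing forward to $Z$ yields an isomorphism of graded $\pi_{*}\mathcal{O}_{X}$-algebras
$$\bigoplus_{m}\pi_{*}\mathcal{O}_{X}(\lfloor m(K_{X}+\Delta+A)\rfloor)\;\cong\;\bigoplus_{m}\pi_{*}\mathcal{O}_{X_{t}}(\lfloor m D_{t}\rfloor)$$
over the shrunk neighborhood of $W$.

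Finally, since $D_{t}$ is a globally $\mathbb{Q}$-Cartier divisor that is semi-ample over $Z$, a standard argument shows that $\bigoplus_{m}\pi_{*}\mathcal{O}_{X_{t}}(\lfloor m D_{t}\rfloor)$ is locally finitely generated over $Z$: after further shrinking $Z$, a suitable multiple $\ell D_{t}$ is a $\pi$-free Cartier divisor defining a morphism over $Z$ from $X_{t}$ onto a complex analytic space $Y$ equipped with a projective morphism $\rho\colon Y\to Z$ and a $\rho$-ample line bundle $H$ with $\ell D_{t}$ the pullback of $H$; the Veronese subalgebra $\bigoplus_{k}\pi_{*}\mathcal{O}_{X_{t}}(k\ell D_{t})\cong\bigoplus_{k}\rho_{*}\mathcal{O}_{Y}(kH)$ is then a locally finitely generated $\mathcal{O}_{Z}$-algebra by the analytic analogs of Serre's theorems (Grauert's coherence theorem and Cartan's theorems over the Stein space $Z$), and $\bigoplus_{m}\pi_{*}\mathcal{O}_{X_{t}}(\lfloor m D_{t}\rfloor)$ is a finitely generated module over it, which follows from relative Serre vanishing together with coherence. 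Combining with the isomorphism of the previous step, $\bigoplus_{m}\pi_{*}\mathcal{O}_{X}(\lfloor m(K_{X}+\Delta+A)\rfloor)$ is finitely generated over the shrunk neighborhood of $W$, hence near $z_{0}$; as $z_{0}\in Z$ was arbitrary, it is locally finitely generated, as required. The only mildly delicate points are the round-down bookkeeping in the MMP-invariance of the section ring---handled by the $\varphi_{i}$-ampleness of $-D_{i}$---and the passage through coherence over the Stein base; the substance of the statement is carried by Theorem~\ref{thm--termination-lcmmp-intro}.
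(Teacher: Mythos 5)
Your proposal is correct and follows the same route as the paper: reduce to a local statement near a point of $Z$, run the MMP of Theorem \ref{thm--termination-lcmmp-main} to reach a model with semi-ample log canonical divisor, and conclude finite generation there. The paper's own proof is just this two-line reduction, so your additional verifications (invariance of the relative section ring under the MMP steps via the $\varphi_{i}$-ampleness of $-D_{i}$, and finite generation for a semi-ample globally $\mathbb{Q}$-Cartier divisor over a Stein base) are exactly the standard details it leaves implicit.
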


The proofs of the main result and the corollaries are almost the same as in the algebraic case (\cite{has-mmp-normal-pair}). 
We recommend reading \cite{has-mmp-normal-pair} to see the circle of ideas and the detailed arguments.
In this paper, we mainly write the definitions used in the proofs and the detailed proofs of some results (Theorem \ref{thm--mmpstep-quasi-log} and Lemma \ref{lem--mmp-fibration-kltlocus}) that the argument in \cite{has-mmp-normal-pair} does not work well because of lack of technical results.

The contents of this paper are as follows: In Section \ref{sec--preliminaries}, we collect some definitions. 
In Section \ref{sec--running-mmp}, we prove results to run an MMP for normal pairs. 
In Section \ref{sec--mmp-lc}, we prove the main result and corollaries. 
In Section \ref{sec--glue}, we discuss the MMP for normal pairs for algebraic stacks and analytic stacks. 

\begin{ack}
The author was partially supported by JSPS KAKENHI Grant Number JP22K13887. 
The author is grateful to Makoto Enokizono for discussions.  
\end{ack}

\section{Preliminaries}\label{sec--preliminaries}

Throughout this paper, complex analytic spaces are always assumed to be Hausdorff and second countable. 
{\em Analytic varieties} are reduced and irreducible complex analytic spaces. 
For notations and definitions used in this paper, see \cite{fujino-analytic-bchm} and  \cite{eh-analytic-mmp}. 

For the algebraic analog of the results in this section, see \cite[Section 2]{has-mmp-normal-pair}. 

\begin{divisor}
Let $\pi \colon X \to Z$ be a projective morphism from a normal analytic variety to a complex analytic space. 
We will freely use the definitions of $\pi$-nef $\mathbb{R}$-divisor, $\pi$-ample $\mathbb{R}$-divisor, $\pi$-semi-ample $\mathbb{R}$-divisor, and $\pi$-pseudo-effective $\mathbb{R}$-Cartier divisor as in \cite{fujino-analytic-bchm}. 
The real vector space spanned by the prime divisors on $X$ is denoted by ${\rm WDiv}_{\mathbb{R}}(X)$ (\cite[Definition 2.35]{fujino-analytic-bchm}). 
For a prime divisor $P$ over $X$, the image of $P$ on $X$ is denoted by $c_{X}(P)$. 

A {\em contraction} $f\colon X\to Y$ is a projective morphism of analytic varieties such that $f_{*}\mathcal{O}_{X}\cong \mathcal{O}_{Y}$. 
For an analytic variety $X$ and an $\mathbb{R}$-divisor $D$ on $X$, a {\em log resolution of} $(X,D)$ is a projective bimeromorphic morphism $g \colon V\to X$ from a non-singular analytic variety $V$ such that the exceptional locus ${\rm Ex}(g)$ of $g$ is pure codimension one and ${\rm Ex}(g)\cup {\rm Supp}\,g_{*}^{-1}D$ is a simple normal crossing divisor. 

We refer to \cite[Definition 2.30]{fujino-analytic-bchm} for {\em meromorphic map} and {\em bimeromorphic map}. 
A bimeromorphic map $\phi\colon X \dashrightarrow X'$ of analytic varieties is called a {\em bimeromorphic contraction} if $\phi^{-1}$ does not contract any divisor. 
We say that $\phi$ is {\em small} if $\phi$ and $\phi^{-1}$ are bimeromorphic contractions.

\begin{defn}[Property (P), see {\cite{fujino-analytic-bchm}}]\label{defn--property(P)}
Let $\pi \colon X \to Y$ be a projective morphism of complex analytic spaces, and let $W \subset Y$ be a compact subset. 
In this paper, we will use the following conditions:
\begin{itemize}
\item[(P1)]
$X$ is a normal complex variety,
\item[(P2)]
$Y$ is a Stein space,
\item[(P3)]
$W$ is a Stein compact subset of $Y$, and
\item[(P4)]
$W \cap Z$ has only finitely many connected components for any analytic subset $Z$
which is defined over an open neighborhood of $W$. 
\end{itemize}
We say that {\em $\pi \colon X \to Y$ and $W \subset Y$ satisfy (P)} if the conditions (P1)--(P4) hold. 
\end{defn}

\begin{defn}[$\mathbb{R}$-line bundle, relative ampleness, relative semi-ampleness]\label{defn--R-line-bundle-an-sp}
Let $X$ be a complex analytic space and let ${\rm Pic}(X)$ be the Picard group of $X$. 
A {\em $\mathbb{Q}$-line bundle on $X$} is an element of ${\rm Pic}(X)\otimes_{\mathbb{Z}}\mathbb{Q}$. 
Let $\pi \colon X \to Z$ be a projective morphism to a complex analytic space $Z$. 
A $\mathbb{Q}$-line bundle $\mathcal{L}$ on $X$ is {\em $\pi$-ample} or {\em ample over $Z$} if $\mathcal{L}$ is a finite $\mathbb{Q}_{>0}$-linear combination of $\pi$-ample invertible sheaves on $X$. 
A $\mathbb{Q}$-line bundle $\mathcal{L}$ on $X$ is {\em $\pi$-semi-ample} or {\em semi-ample over $Z$} if we can write $\mathcal{L}=\sum_{i=1}^{k}q_{i} \mathcal{L}_{i}$ as an element of ${\rm Pic}(X)\otimes_{\mathbb{Z}}\mathbb{Q}$ such that $q_{1},\,\cdots,\,q_{k}$ are positive rational numbers and $\mathcal{L}_{1},\,\cdots,\,\mathcal{L}_{k}$ are globally generated over $Z$, in other words, the natural morphism
$\pi^{*}\pi_{*}\mathcal{L}_{i} \to \mathcal{L}_{i}$ 
is surjective for all $1 \leq i \leq k$. 

Similarly, we define an {\em $\mathbb{R}$-line bundle on $X$} to be an element of ${\rm Pic}(X)\otimes_{\mathbb{Z}}\mathbb{R}$, and we say that an $\mathbb{R}$-line bundle $\mathcal{L}$ on $X$ is {\em $\pi$-ample} or {\em ample over $Z$} (resp.~{\em $\pi$-semi-ample} or {\em semi-ample over $Z$}) if $\mathcal{L}$ is a finite $\mathbb{R}_{>0}$-linear combination of invertible sheaves on $X$ that are $\pi$-ample (resp.~globally generated over $Z$). 
\end{defn}
\end{divisor}

\begin{Rlinearsystem}\label{subsec--linear-system}

In this subsection, we define the relative $\mathbb{R}$-linear system and the relative stable base locus.

\begin{defn}[Relative $\mathbb{R}$-linear system, relative stable base locus]\label{defn--R-linear-system}
Let $\pi \colon X \to Z$ be a projective morphism from a normal analytic variety $X$ to a Stein space $Z$. 
Let $D$ be a (not necessarily $\mathbb{Q}$-Cartier) $\mathbb{Q}$-divisor on $X$. 
Then the {\em $\mathbb{Q}$-linear system of $D$}, denoted by $|D|_{\mathbb{Q}}$, is defined by
$$|D|_{\mathbb{Q}}:=\{E \geq 0\,|\, E \sim_{\mathbb{Q}}D\}.$$ 
The {\em $\mathbb{Q}$-linear system of $D$ over $Z$}, denoted by $|D/Z|_{\mathbb{Q}}$, is defined by
$$|D/Z|_{\mathbb{Q}}:=\{E \geq 0\,|\, E \sim_{\mathbb{Q},\, Z}D\}.$$ 
The {\em stable base locus of $D$ over $Z$}, denoted by ${\rm Bs}|D/Z|_{\mathbb{Q}}$, is defined by
$${\rm Bs}|D/Z|_{\mathbb{Q}}:=\bigcap_{E \in |D/Z|_{\mathbb{Q}}}{\rm Supp}\,E.$$ 
If $|D/Z|_{\mathbb{Q}}$ is empty, then we set ${\rm Bs}|D/Z|_{\mathbb{Q}}=X$ by convention. 

For an $\mathbb{R}$-divisor $D'$ on $X$, the {\em $\mathbb{R}$-linear system of $D'$}, denoted by $|D'|_{\mathbb{R}}$, is defined by
$$|D'|_{\mathbb{R}}:=\{E' \geq 0\,|\, E' \sim_{\mathbb{R}}D'\}.$$ 
The {\em $\mathbb{R}$-linear system of $D'$ over $Z$}, denoted by $|D'/Z|_{\mathbb{R}}$, is defined by
$$|D'/Z|_{\mathbb{R}}:=\{E' \geq 0\,|\, E' \sim_{\mathbb{R},\, Z}D'\},$$ 
 and the {\em stable base locus of $D'$ over $Z$}, denoted by ${\rm Bs}|D'/Z|_{\mathbb{R}}$, is defined by
$${\rm Bs}|D'/Z|_{\mathbb{R}}:=\bigcap_{E' \in |D'/Z|_{\mathbb{R}}}{\rm Supp}\,E'.$$ 
If $|D'/Z|_{\mathbb{R}}$ is empty, then we set ${\rm Bs}|D'/Z|_{\mathbb{R}}=X$ by convention. 

We say that an $\mathbb{R}$-divisor $D''$ on $X$ is {\em movable over $Z$} if the codimension of ${\rm Bs}|D''/Z|_{\mathbb{R}}$ in $X$ is greater than or equal to two.
\end{defn}

\begin{rem}[{\cite[Proposition 2.9]{eh-analytic-mmp}}]\label{rem--stable-base-locus-restriction}
Let $\pi \colon X \to Z$ and $D$ be as in Definition \ref{defn--nonneflocus} and $U \subset Z$ a Stein open subset. 
If $D$ is a globally $\mathbb{R}$-Cartier divisor, then we have
$$({\rm Bs}|D/Z|_{\mathbb{R}})|_{\pi^{-1}(U)}={\rm Bs}|D|_{\pi^{-1}(U)}/U|_{\mathbb{R}}$$
as subsets of $\pi^{-1}(U)$. 
Furthermore, if $D$ is a globally $\mathbb{Q}$-Cartier divisor then
$$({\rm Bs}|D/Z|_{\mathbb{Q}})|_{\pi^{-1}(U)}={\rm Bs}|D|_{\pi^{-1}(U)}/U|_{\mathbb{Q}}$$
as subsets of $\pi^{-1}(U)$. 
\end{rem}

\begin{rem}
The complex analytic analog of \cite[Lemmas 2.4, 2,5, Theorem 2.6]{has-mmp-normal-pair} holds because the arguments in the algebraic case work in our situation.  
\end{rem}

\end{Rlinearsystem}

\begin{nonneflocus}\label{subsec--non-nef-locus}

The goal of this subsection is to define the relative non-nef locus.

\begin{defn}[Asymptotic vanishing order, {\cite[III, \S 4.a]{nakayama}}, {\cite[Subsectoin 4.1]{eh-analytic-mmp}}]\label{defn--asymp-van-order}
Let $\pi \colon X \to Z$ be a projective morphism from a normal analytic
variety $X$ to a Stein space $Z$.
Let $D$ be a $\pi$-pseudo-effective $\mathbb{R}$-Cartier divisor on $X$. 
For a prime divisor $P$ over $X$, we define the {\em asymptotic vanishing order of $P$ over $Z$}, denoted by $\sigma_{P}(D; X/Z)$, as follows: 
We take a resolution of $f \colon X' \to X$ of $X$ on which $P$ appears as a prime divisor. 
We put 
$$m_{f^{*}D}:={\rm max}\left\{m \in \mathbb{Z}_{\geq 0}\,\middle|\begin{array}{l}(\pi\circ f)_{*}\mathcal{O}_{Y}(\lfloor f^{*}D \rfloor-mP)\hookrightarrow(\pi\circ f)_{*}\mathcal{O}_{Y}(\lfloor f^{*}D \rfloor)\\ 
\text{is an isomorphism}\end{array}\right\}$$
if $(\pi\circ f)_{*}\mathcal{O}_{Y}(\lfloor f^{*}D \rfloor)$ is not the zero sheaf. 
When $D$ is big over $Z$, then
$$\sigma_{P}(f^{*}D; X'/Z)_{\mathbb{Z}}:=\left\{\begin{array}{ll}
+\infty& ((\pi\circ f)_{*}\mathcal{O}_{Y}(\lfloor f^{*}D \rfloor)=0)\\
m_{f^{*}D}+{\rm coeff}_{P}(\{f^{*}D\})& ((\pi\circ f)_{*}\mathcal{O}_{Y}(\lfloor f^{*}D \rfloor)\neq0)
\end{array}\right.$$
and $\sigma_{P}(D; X/Z)$ is defined to be
$$\sigma_{P}(D; X/Z):=\underset{m \to \infty}{\rm lim}\frac{1}{m}\sigma_{P}(mf^{*}D; X'/Z)_{\mathbb{Z}}.$$
Note that $\sigma_{P}(D; X/Z)$ is independent of $f$. 
If $D$ is not necessarily $\pi$-big, then we take a resolution $f \colon X' \to X$ on which $P$ appears as a prime divisor, and 
$$\sigma_{P}(D; X/Z)=\underset{\epsilon\to 0+}{\rm lim}\sigma_{P}(f^{*}D+\epsilon A'; X'/Z)$$
for a $(\pi \circ f)$-ample $\mathbb{R}$-divisor $A'$ on $X'$. 
Note that $\sigma_{P}(D; X/Z)$ does not depend on $f$ and $A'$, and we may have $\sigma_{P}(D; X/Z)=+\infty$ for some $P$. 
In fact, if $D$ is $\pi$-big then we have 
$$\sigma_{P}(D; X/Z)={\rm inf}\set{{\rm coeff}_{P}(E)| E \in |D/Y|_{\mathbb{R}}}={\rm inf}\set{{\rm coeff}_{P}(E)| E \in |D|_{\mathbb{R}}}$$
by \cite[Lemma 4.3]{eh-analytic-mmp}. 
\end{defn}

\begin{defn}[Relative non-nef locus, cf.~{\cite[III, 2.6 Definition]{nakayama}}]\label{defn--nonneflocus}
Let $\pi \colon X \to Z$ be a projective morphism from a normal analytic
variety $X$ to a Stein space $Z$. 
Let $D$ be a $\pi$-pseudo-effective $\mathbb{R}$-Cartier divisor on $X$. 
We define the {\em non-nef locus of $D$ over $Z$}, denoted by ${\rm NNef}(D/Z)$, to be
$${\rm NNef}(D/Z):=\underset{\sigma_{P}(D; X/Z)>0}{\bigcup}c_{X}(P)$$
where $P$ runs over prime divisors over $X$ and $\sigma_{P}(D; X/Z)$ is the asymptotic vanishing order of $P$ over $Z$ in Definition \ref{defn--asymp-van-order}.   
\end{defn}

\begin{rem}\label{rem--nonnef-restriction}
Let $\pi \colon X \to Z$ and $D$ be as in Definition \ref{defn--nonneflocus} and $U \subset Z$ a Stein open subset. 
By \cite[Theorem 4.7 (6)]{eh-analytic-mmp} and \cite[Proposition 2.9]{eh-analytic-mmp}, we have
$${\rm NNef}(D/Z)|_{\pi^{-1}(U)}={\rm NNef}(D|_{\pi^{-1}(U)}/U)$$
as subsets of $\pi^{-1}(U)$. 
\end{rem}

\begin{rem}[{cf.~\cite{bbp}, \cite{tsakanikas-xie}, \cite[Remark 2.13]{has-mmp-normal-pair}, \cite[Lemma 2.15]{has-mmp-normal-pair}}]\label{rem--nonnef-relation}
We have
$${\rm NNef}(D/Z) \subset {\rm Bs}|D/Z|_{\mathbb{R}}$$
for every $\pi \colon X \to Z$ and $D$ as in Definition \ref{defn--nonneflocus}. 
Furthermore, we have 
$${\rm NNef}(D/Z) \supset f({\rm NNef}(f^{*}D/Z))$$ 
for any projective morphism $f \colon Y \to X$ from a normal analytic variety $Y$, and the both hand sides coincide when $f$ is bimeromorphic.  
\end{rem}

\begin{thm}[cf.~{\cite[Theorem 2.14]{has-mmp-normal-pair}}]\label{thm--nonnef-negativecurve}
Let $\pi \colon X \to Z$ be a projective morphism from a normal analytic
variety $X$ to a Stein space $Z$. 
Let $D$ be a $\pi$-pseudo-effective $\mathbb{R}$-Cartier divisor on $X$. 
Let $C$ be a curve on $X$ such that $\pi(C)$ is a point and $C \not\subset {\rm NNef}(D/Z)$. 
Then $(D \,\cdot\, C) \geq 0$. 
\end{thm}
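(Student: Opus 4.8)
The plan is to reduce the statement to a local, big situation where the asymptotic vanishing order has a clean interpretation, and then argue by a standard limiting trick.

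First I would reduce to the case where $D$ is $\pi$-big. Choose a $\pi$-ample $\mathbb{R}$-divisor $A$ on $X$. For each $\epsilon > 0$ the divisor $D + \epsilon A$ is $\pi$-big, and by Remark \ref{rem--nonnef-relation} (more precisely, by the description of $\sigma_P$ in Definition \ref{defn--asymp-van-order} via the limit over $\epsilon \to 0+$, and the fact that the non-nef locus of $D$ is the union of $c_X(P)$ over $P$ with $\sigma_P(D;X/Z)>0$), one checks that $\mathrm{NNef}(D/Z) \supset \mathrm{NNef}((D+\epsilon A)/Z)$ does \emph{not} hold in general, but what \emph{is} true and what I need is: since $C \not\subset \mathrm{NNef}(D/Z)$, every prime divisor $P$ over $X$ with $c_X(P) \supset C$ satisfies $\sigma_P(D;X/Z) = 0$. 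It then suffices to show $(D \cdot C) \geq 0$; and since $(D\cdot C) = \lim_{\epsilon\to 0+}((D+\epsilon A)\cdot C) - \lim_{\epsilon\to 0+}\epsilon(A\cdot C)$, once I know $((D+\epsilon A)\cdot C) \geq 0$ for all small $\epsilon$ modulo an error controlled by $\sigma$, I can pass to the limit. So I will work with $D$ assumed $\pi$-big from now on, keeping track of the error term coming from the perturbation at the very end.

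Next, with $D$ being $\pi$-big, I use the key formula from Definition \ref{defn--asymp-van-order} (citing \cite[Lemma 4.3]{eh-analytic-mmp}):
\[
\sigma_P(D;X/Z) = \inf\{\, \mathrm{coeff}_P(E) \mid E \in |D/Z|_{\mathbb{R}} \,\}.
\]
After shrinking $Z$ around a point of $\pi(C)$ (using that $Z$ is Stein and all the constructions are compatible with restriction to Stein open subsets, cf.\ Remark \ref{rem--stable-base-locus-restriction} and Remark \ref{rem--nonnef-restriction}), I may assume $|D/Z|_{\mathbb{R}} \neq \emptyset$. Now fix any prime divisor $P$ on $X$ (not just over $X$) with $C \subset P$; since $C \not\subset \mathrm{NNef}(D/Z)$ we get $\sigma_P(D;X/Z) = 0$, hence for every $\delta > 0$ there is $E_\delta \in |D/Z|_{\mathbb{R}}$, i.e.\ an effective $\mathbb{R}$-divisor with $E_\delta \sim_{\mathbb{R},Z} D$, such that $\mathrm{coeff}_P(E_\delta) < \delta$ for every prime component $P$ of $E_\delta$ containing $C$ — more carefully, I pick finitely many such $P$ (the prime divisors on $X$ passing through the generic point of $C$ that could appear) and choose $E_\delta$ so that the coefficient along each of them is $< \delta$. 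Since $\pi(C)$ is a point, $(E_\delta \cdot C) = (D\cdot C)$. Decompose $E_\delta = E_\delta^C + E_\delta'$ where $E_\delta^C$ collects the components through the generic point of $C$ and $E_\delta'$ collects the rest; then $(E_\delta' \cdot C) \geq 0$ because $E_\delta'$ is effective and does not contain $C$, so $C$ is not contained in its support and the intersection number is a nonnegative count of proper intersection points. Therefore $(D\cdot C) = (E_\delta^C\cdot C) + (E_\delta'\cdot C) \geq (E_\delta^C\cdot C)$, and $(E_\delta^C \cdot C) \geq -\delta \cdot c$ where $c$ is a constant (the sum of $(P_i \cdot C)$ in absolute value over the finitely many possible components $P_i$, which is bounded independently of $\delta$ once we fix a projective compactification / polarization to make sense of these intersection numbers — here I use that $\pi$ is projective so $C$ lies in a fiber which is projective). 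Letting $\delta \to 0+$ gives $(D\cdot C) \geq 0$.

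The main obstacle I anticipate is the perturbation step in the non-big case: the naive identity $\mathrm{NNef}(D/Z) = \bigcap_{\epsilon>0}\mathrm{NNef}((D+\epsilon A)/Z)$ is not literally how the non-nef locus is defined, so I must be careful. The clean route is to avoid reducing $\mathrm{NNef}$ and instead work divisor-by-divisor: the hypothesis $C\not\subset\mathrm{NNef}(D/Z)$ is exactly the statement that $\sigma_P(D;X/Z)=0$ for all prime divisors $P$ over $X$ with $c_X(P)\supset C$, and by definition $\sigma_P(D;X/Z)=\lim_{\epsilon\to0+}\sigma_P(f^*D+\epsilon A';X'/Z)$ on a suitable resolution $f\colon X'\to X$; so for $P$ lying on $X'$ with $c_X(P) = \overline{C}$-or-bigger we get $\sigma_P((D+\epsilon A)/Z)\to 0$, and running the big-case argument for $D+\epsilon A$ on $X'$ (pulling $C$ back to a curve $C'$ with $f(C')=C$, using $(f^*D\cdot C') = (D\cdot f_*C')$ which is a positive multiple of $(D\cdot C)$) yields $((D+\epsilon A)\cdot C) \geq -\delta(\epsilon)\cdot c$ for an error that can be made small; then send $\epsilon\to 0+$ after first sending $\delta\to 0+$. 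Organizing the two limits in the correct order, and ensuring the constant $c$ stays uniformly bounded, is the technical heart; everything else is the direct effective-divisor intersection computation above.
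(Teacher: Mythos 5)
The core of your big-case argument has a genuine gap: the uniform constant $c$ does not exist as stated, and the divisor $E_\delta$ you need is not produced by your hypotheses. First, there are in general infinitely many prime divisors on $X$ containing $C$, and the components of $E_\delta$ passing through $C$ change as $\delta$ (and later $\epsilon$) varies; since $P\cdot C$ depends only on the class of $P$ and is unbounded below as $P$ ranges over prime divisors containing $C$, there is no $\delta$-independent bound on $\sum_i|P_i\cdot C|$ over the components of $E_\delta$ through $C$, so the inequality $(E_\delta^C\cdot C)\ge-\delta c$ is unjustified. Second, and prior to that, the hypothesis only gives $\sigma_P(D;X/Z)=0$ separately for each prime divisor $P$ with $c_X(P)\supset C$; by the formula of Definition \ref{defn--asymp-van-order} this yields, for each \emph{single} $P$, some member of $|D/Z|_{\mathbb{R}}$ with small coefficient along that $P$, but it does not yield one member $E_\delta$ all of whose own components through $C$ simultaneously have coefficient $<\delta$ (naive averaging of the individual members does not control the cross terms). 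These two points are exactly the technical heart of the statement, so the proposal does not go through as written; the same defects propagate into your $\delta(\epsilon)$ bookkeeping in the non-big case.

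The standard repair — and, in substance, the argument of \cite[Theorem 2.14]{has-mmp-normal-pair} to which the paper reduces after shrinking $Z$ around $\pi(C)$ via Remarks \ref{rem--stable-base-locus-restriction} and \ref{rem--nonnef-restriction} — is to replace the varying family of divisors on $X$ through $C$ by a \emph{single} divisorial valuation over $X$ with center $C$: take a modification $\mu\colon X'\to X$ (e.g.\ a log resolution of the ideal of $C$) with an exceptional prime divisor $P$ such that $c_X(P)=C$, so that the hypothesis gives $\sigma_P(D;X/Z)=0$, and hence $\sigma_P(D+\epsilon A;X/Z)=0$ for the big perturbations by monotonicity under adding an ample divisor. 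On this fixed model one has a uniform estimate $E\cdot C\ge -c_0\,\mathrm{coeff}_P(\mu^*E)$ for every effective $\mathbb{R}$-divisor $E$ on $X$, with $c_0$ depending only on $\mu$ and on a fixed curve $\tilde C\subset P$ mapping onto $C$ (write $\mu^*E$ as strict transform plus exceptional part and intersect with $\tilde C$). Choosing $E\in|D+\epsilon A/Z|_{\mathbb{R}}$ with $\mathrm{coeff}_P(\mu^*E)$ arbitrarily small, which is legitimate because $D+\epsilon A$ is big over $Z$ and one may use \cite[Lemma 4.3]{eh-analytic-mmp} as recalled in Definition \ref{defn--asymp-van-order}, gives $((D+\epsilon A)\cdot C)\ge 0$ outright, and then $\epsilon\to 0^+$ concludes; in particular no double limit or error-term organization is needed, contrary to what you anticipate in your last paragraph.
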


\begin{proof}
By Remark \ref{rem--nonnef-restriction}, we may freely shrink $Z$ around $\pi(C)$ without loss of generality. 
Then the argument of \cite[Proof of Theorem 2.14]{has-mmp-normal-pair} works with no changes. 
\end{proof}

\end{nonneflocus}

\begin{sing}

In this subsection, we collect some definitions and basic results on singularities of pairs.

A {\em sub normal pair} $(X,\Delta)$ consists of a normal analytic variety $X$ and an $\mathbb{R}$-divisor $\Delta$ on $X$ such that $K_{X}+\Delta$ is $\mathbb{R}$-Cartier. 
A {\em normal pair} is a sub normal pair $(X,\Delta)$ such that $\Delta$ is effective. 
We use these terms to explicitly state the normality of $X$. 
For a sub normal pair $(X,\Delta)$ and a prime divisor $P$ over $X$, the {\em discrepancy of $P$ with respect to $(X,\Delta)$} is denoted by $a(P,X,\Delta)$. 
A normal pair $(X,\Delta)$ is {\em Kawamata log terminal} ({\em klt}, for short) if $a(P,X,\Delta)>-1$ for all prime divisors $P$ over $X$. 
A normal pair $(X,\Delta)$ is {\em log canonical} ({\em lc}, for short) if $a(P,X,\Delta) \geq -1$ for all prime divisors $P$ over $X$. 
A normal pair $(X,\Delta)$ is {\em divisorially log terminal} ({\em dlt}, for short) if $(X,\Delta)$ is lc and there exists a log resolution $f \colon Y \to X$ of $(X,\Delta)$ such that every $f$-exceptional prime divisor $E$ on $Y$ satisfies $a(E,X,\Delta) > -1$. 
When $(X,\Delta)$ is an lc pair, an {\em lc center} of $(X,\Delta)$ is $c_{X}(P)$ for some prime divisor $P$ over $X$ such that $a(P,X,\Delta)=-1$. 
The properties of lc centers of dlt pairs in \cite[Proposition 3.9.2]{fujino-what-log-ter} and \cite[Theorem 4.16]{kollar-mmp} hold in the complex analytic setting.  

\begin{defn}[Non-lc locus, non-klt locus]\label{defn--nlc-nklt}
Let $(X,\Delta)$ be a normal pair. 
We define the {\em non-lc locus} and the {\em non-klt locus} of $(X,\Delta)$, denoted by ${\rm Nlc}(X,\Delta)$ and ${\rm Nklt}(X,\Delta)$ respectively, to be complex analytic subspaces of $X$ by the following construction: 
We take a log resolution $f \colon Y \to X$ of $(X,\Delta)$. 
We may write $K_{Y}+\Gamma = f^{*}(K_{X}+\Delta)$ with an $\mathbb{R}$-divisor $\Gamma$ on $Y$. 
Then the natural isomorphism $\mathcal{O}_{X} \to f_{*}\mathcal{O}_{Y}(\lceil -(\Gamma^{<1})\rceil)$
defines ideal sheaves
\begin{equation*}
\begin{split}
&\mathcal{I}_{{\rm Nlc}(X,\Delta)}:=f_{*}\mathcal{O}_{Y}(\lceil -(\Gamma^{<1})\rceil-\lfloor \Gamma^{>1}\rfloor)=f_{*}\mathcal{O}_{Y}(-\lfloor \Gamma\rfloor+\Gamma^{=1}),\quad {\rm and}\\
&\mathcal{I}_{{\rm Nklt}(X,\Delta)}:=f_{*}\mathcal{O}_{Y}(-\lfloor \Gamma\rfloor)=f_{*}\mathcal{O}_{Y}(\lceil -(\Gamma^{<1})\rceil-\lfloor \Gamma^{>1}\rfloor-\Gamma^{=1})
\end{split}
\end{equation*}
of $\mathcal{O}_{X}$. 
These ideal sheaves are independent of the log resolution $f \colon Y \to X$ (\cite{fst-suppli-nonlc-ideal}). 
Then ${\rm Nlc}(X,\Delta)$ and ${\rm Nklt}(X,\Delta)$ are the complex analytic subspaces of $X$ defined by $\mathcal{I}_{{\rm Nlc}(X,\Delta)}$ and $\mathcal{I}_{{\rm Nklt}(X,\Delta)}$, respectively. 
\end{defn}

${\rm Nlc}(X,\Delta)$ and ${\rm Nklt}(X,\Delta)$ sometimes mean the support of the non-lc locus and the non-klt locus of $(X,\Delta)$ respectively if there is no risk of confusion.

\begin{defn}[Lc center]\label{defn--lccenter}
Let $(X,\Delta)$ be a normal pair. 
A subset $S \subset X$ is called an {\em lc center} of $(X,\Delta)$ if $S \not\subset {\rm Nlc}(X,\Delta)$ and there exists a prime divisor $P$ over $X$ such that $S=c_{X}(P)$ and $a(P,X,\Delta)=-1$. 
Unless otherwise stated, we assume lc centers to be reduced complex analytic subspace of $X$. 
\end{defn}

The following result by Fujino \cite{fujino-analytic-bchm} is implicitly used in the proof of the main result of this paper.

\begin{thm}[Dlt blow-up, {\cite[Theorem 1.27]{fujino-analytic-bchm}}]\label{thm--dlt-blowup}
Let $\pi \colon X \to Z$ be a projective morphism from a normal analytic variety $X$ to a complex analytic space $Z$, and let $W \subset Z$ be a compact subset such that $\pi$ and $W$ satisfy (P). 
Let $(X,\Delta)$ be a normal pair. 
Then, after shrinking $Z$ around $W$, we can construct a projective bimeromorphic morphism $f \colon Y \to X$ from a normal complex variety $Y$ with the following properties. 
\begin{itemize}
\item
$Y$ is $\mathbb{Q}$-factorial over $W$,
\item
$a(E,X,\Delta)\leq -1$ for every $f$-exceptional prime divisor $E$ on $Y$, and
\item
$(Y,\Delta^{<1}_{Y}+{\rm Supp}\,\Delta^{\geq 1}_{Y})$ is dlt, where $K_{Y}+\Delta_{Y}=f^{*}(K_{X}+\Delta)$.
\end{itemize}
We call the morphism $f\colon (Y,\Delta_{Y}) \to (X,\Delta)$ a dlt blow-up. 
\end{thm}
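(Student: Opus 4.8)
The plan is to carry out Hacon's construction of dlt blow-ups via the minimal model program, the only new point compared with the algebraic case being that one feeds in the complex analytic versions of the relevant results, namely the relative MMP and the BCHM-type theorems for klt and dlt pairs of \cite{fujino-analytic-bchm} and \cite{eh-analytic-mmp}; along the way one freely shrinks $Z$ around $W$ so that property (P) persists for the morphisms that occur. First I would take a log resolution $g\colon Y_{0}\to X$ of $(X,\Delta)$, arranged so that $\pi\circ g\colon Y_{0}\to Z$ and $W$ still satisfy (P). Writing $K_{Y_{0}}+\Delta_{Y_{0}}=g^{*}(K_{X}+\Delta)$ and letting $E_{1},\dots,E_{r}$ be the $g$-exceptional prime divisors with $a(E_{j},X,\Delta)>-1$, set
$$\Theta:=g_{*}^{-1}\Delta+\sum_{j=1}^{r}E_{j}+\sum_{\substack{E\ g\text{-exceptional}\\ a(E,X,\Delta)\leq -1}}\bigl(-a(E,X,\Delta)\bigr)\,E .$$
Then $\Theta\geq 0$, it agrees with $\Delta_{Y_{0}}$ away from $E_{1}+\dots+E_{r}$, the pair $(Y_{0},\Theta^{<1}+{\rm Supp}\,\Theta^{\geq 1})$ is log smooth and hence dlt, and
$$K_{Y_{0}}+\Theta=g^{*}(K_{X}+\Delta)+G,\qquad G:=\sum_{j=1}^{r}\bigl(1+a(E_{j},X,\Delta)\bigr)E_{j},$$
with $G$ an effective $g$-exceptional $\mathbb{R}$-divisor whose support is exactly $E_{1}\cup\dots\cup E_{r}$.

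Next I would run a $(K_{Y_{0}}+\Theta)$-MMP over $X$ with scaling of a $(\pi\circ g)$-ample $\mathbb{R}$-divisor. After adding a small such ample divisor the boundary becomes big and of log general type over $X$, so the analytic BCHM-type results guarantee, after shrinking $Z$ around $W$, that this MMP consists of finitely many steps and terminates; let $f\colon Y\to X$ be the structure morphism of the resulting model, a projective bimeromorphic morphism. Because $K_{Y_{0}}+\Theta\sim_{\mathbb{R},X}G\geq 0$ with $G$ $g$-exceptional, the negativity lemma (compare Theorem \ref{thm--nonnef-negativecurve}) forces every step of this MMP to take place inside ${\rm Supp}\,G$, so the MMP contracts precisely $E_{1}\cup\dots\cup E_{r}$ and no other divisor. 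Consequently the $f$-exceptional prime divisors are exactly the $g$-exceptional prime divisors with discrepancy $\leq -1$; moreover, the exceptional part $G$ having been entirely contracted, the push-forward of $\Theta$ to $Y$ equals the divisor $\Delta_{Y}$ determined by $K_{Y}+\Delta_{Y}=f^{*}(K_{X}+\Delta)$.

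It remains to record the three bulleted properties. The variety $Y$ is $\mathbb{Q}$-factorial over $W$ because $Y_{0}$ is (it is non-singular) and each step of an MMP over $Z$ around $W$ preserves $\mathbb{Q}$-factoriality over $W$; the inequality $a(E,X,\Delta)\leq -1$ for $f$-exceptional $E$ is exactly what was shown above; and $(Y,\Delta_{Y}^{<1}+{\rm Supp}\,\Delta_{Y}^{\geq 1})$ is dlt because it is the output of the above MMP run from the log smooth pair $(Y_{0},\Theta^{<1}+{\rm Supp}\,\Theta^{\geq 1})$, dlt-ness being preserved along the steps. The genuine obstacle here is not the divisor-theoretic bookkeeping, which is identical to the algebraic case, but the analytic input: that the relative MMP with scaling for dlt (or klt) pairs exists and terminates over a neighborhood of a compact set satisfying (P), and that ``$\mathbb{Q}$-factorial over $W$'' and dlt-ness are preserved by its steps. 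All of this is available in \cite{fujino-analytic-bchm} and \cite{eh-analytic-mmp}; in fact the statement is \cite[Theorem 1.27]{fujino-analytic-bchm}, so in the text one simply cites that theorem.
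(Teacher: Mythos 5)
The paper offers no argument here: the statement is quoted verbatim from Fujino and the ``proof'' is the citation \cite[Theorem 1.27]{fujino-analytic-bchm}, so simply citing that theorem (as you do in your last sentence) is exactly what the paper does. The problem is with the sketch you give before that, and the gap occurs precisely in the case the theorem is really about, namely when $(X,\Delta)$ is \emph{not} lc. Your auxiliary divisor $\Theta$ on the log resolution carries the full coefficients of $g_{*}^{-1}\Delta$ (possibly $>1$) and coefficient $-a(E,X,\Delta)\geq 1$ (possibly $>1$) on the exceptional divisors with discrepancy $<-1$. Hence $(Y_{0},\Theta)$ is in general not lc, let alone klt or dlt, and the step ``after adding a small ample divisor the boundary becomes big and of log general type over $X$, so the analytic BCHM-type results guarantee termination'' is not available: the klt/dlt MMP machinery of \cite{fujino-analytic-bchm} and \cite{eh-analytic-mmp} does not apply to a pair whose non-lc locus is divisorial. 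You cannot substitute the normal-pair MMP of the present paper either: that would be circular (Theorem \ref{thm--dlt-blowup} is an input to those results), and in any case its hypothesis ${\rm NNef}(K_{Y_{0}}+\Theta/X)\cap{\rm Nlc}(Y_{0},\Theta)=\emptyset$ typically fails, since the exceptional divisors with $a>-1$ (which support $G$ and hence the non-nef locus over $X$) meet the coefficient-$>1$ components. A secondary consequence of the same choice: your last bullet argues that $(Y,\Delta_{Y}^{<1}+{\rm Supp}\,\Delta_{Y}^{\geq1})$ is dlt because ``dlt-ness is preserved along the steps'', but the MMP you run is $(K_{Y_{0}}+\Theta)$-negative, not negative for the truncated pair $(Y_{0},\Theta^{<1}+{\rm Supp}\,\Theta^{\geq1})$, so preservation of dlt-ness for that pair does not follow.

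In the lc case your $\Theta$ coincides with Hacon's classical choice (strict transform plus all exceptional divisors with coefficient one), the coefficients lie in $[0,1]$, and the argument you outline—$K_{Y_{0}}+\Theta\equiv_{X}G\geq 0$ exceptional, steps confined to ${\rm Supp}\,G$, negativity forcing $G$ to be contracted—is the standard and correct one. To cover arbitrary normal pairs one must instead run the MMP for a genuinely dlt pair on $Y_{0}$ (coefficients truncated at $1$), which changes the identity $K_{Y_{0}}+\Theta\equiv_{X}G$ into $\equiv_{X}G-F$ with $F\geq 0$ not exceptional, and the bookkeeping of which divisors are contracted and why the truncated pair stays dlt requires a more careful negativity/discrepancy analysis; this is exactly what Fujino's proof of \cite[Theorem 1.27]{fujino-analytic-bchm} supplies. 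So either delete the sketch and cite Fujino, as the paper does, or rework the construction along those lines.
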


\begin{rem}
We note that \cite[Lemma 2.19, Corollary 2.20, Theorem 2.21]{has-mmp-normal-pair} hold in the complex analytic setting. 
The proofs in the algebraic case work with no changes, thus we omit the proofs. 
\end{rem}

\end{sing}

\begin{quasi-log}\label{subsec--quasi-log}

In this subsection, we define a {\em quasi-log complex analytic space induced by a normal pair} (Definition \ref{defn--lc-trivial-fib-quasi-log}). 

\begin{defn}[{Quasi-log complex analytic space, \cite[Definition 1.1]{fujino-quasi-log-analytic}}]\label{defn--quasi-log}
A {\em{quasi-log complex analytic space}} is a complex analytic space $X$ endowed with a globally $\mathbb R$-Cartier divisor (or an $\mathbb R$-line bundle) $\omega$ on $X$, a closed complex analytic subspace ${\rm Nqlc}(X, \omega)\subsetneq X$, and a finite collection $\{C\}$ of reduced and irreducible closed complex analytic subspaces of $X$ such that there is a projective morphism $f\colon (Y, B_Y)\to X$ from a analytically globally embedded simple normal crossing pair satisfying the following properties: 
\begin{itemize}
\item $f^*\omega\sim_{\mathbb R}K_Y+B_Y$, 

\item the natural map $\mathcal O_X \to f_*\mathcal O_Y(\lceil -(B_Y^{<1})\rceil)$ induces an isomorphism 
$$
\mathcal I_{{\rm Nqlc}(X, \omega)}\overset{\cong}{\longrightarrow} f_*\mathcal O_Y(\lceil -(B_Y^{<1})\rceil-\lfloor B_Y^{>1}\rfloor),  
$$ 
where $\mathcal I_{{\rm Nqlc}(X, \omega)}$ is the defining ideal sheaf of ${\rm Nqlc}(X, \omega)$, and

\item the collection of closed analytic subvarieties
$\{C\}$ coincides with the images 
of the strata of $(Y, B_Y)$ that are not included in ${\rm Nqlc}(X, \omega)$. 
\end{itemize}

We simply write $[X, \omega]$ to denote the above data 
$$
\left(X, \omega, f\colon (Y, B_Y)\to X\right)
$$ 
if there is no risk of confusion. 
An element of $\{C\}$ is called a {\em qlc center of $[X,\omega]$}. 

The {\rm non-qklt locus of $X$}, denoted by ${\rm Nqklt}(X, \omega)$, is the union of ${\rm Nqlc}(X, \omega)$ and all qlc centers of $[X,\omega]$. 
We note that ${\rm Nqklt}(X, \omega)$ has the complex analytic subspace structure naturally induced by ${\rm Nqlc}(X, \omega)$ and all qlc centers of $[X,\omega]$.  
\end{defn}

\begin{thm}[cf.~{\cite[Theorem 2.23]{has-mmp-normal-pair}}]\label{thm--abundance-quasi-log}
Let $\pi \colon X \to Z$ be a projective morphism to a Stein space $Z$, and let $W$ be a compact subset such that $\pi$ and $W$ satisfy (P). 
Let $[X,\omega]$ be a quasi-log complex analytic space. 
Let $A$ be a $\pi$-ample $\mathbb{R}$-divisor on $X$. 
Suppose that $\omega+A$ is nef over $W$ and $(\omega+A)|_{{\rm Nqlc}(X, \omega)}$, which we think of an $\mathbb{R}$-line bundle on ${\rm Nqlc}(X, \omega)$, is semi-ample over a neighborhood of $W$. 
Then $\omega+A$ is semi-ample over a neighborhood of $W$. 
\end{thm}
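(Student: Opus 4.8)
The plan is to reduce the semi-ampleness of $\omega+A$ over a neighborhood of $W$ to the base-point-free theorem for quasi-log complex analytic spaces, which is available in \cite{fujino-quasi-log-analytic}. The strategy mirrors the algebraic argument of \cite[Theorem 2.23]{has-mmp-normal-pair}: we want to show that the base locus of $|m(\omega+A)/Z|$ is disjoint from $\pi^{-1}(W)$ for $m$ sufficiently divisible, and then conclude semi-ampleness after shrinking $Z$ around $W$. Since $\omega+A$ is nef over $W$ and $A$ is $\pi$-ample, $\omega+A$ is nef and log big over $W$ with respect to $[X,\omega]$ after perturbing (any qlc center $C$ meets the ample part $A$, so $(\omega+A)|_C$ is big over $W$); this puts us in position to apply the analytic base-point-free theorem of Fujino for quasi-log spaces.

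First I would treat the restriction to ${\rm Nqlc}(X,\omega)$. By hypothesis $(\omega+A)|_{{\rm Nqlc}(X,\omega)}$ is semi-ample over a neighborhood of $W$, so after shrinking $Z$ we may assume the morphism defined by a sufficiently divisible multiple contracts ${\rm Nqlc}(X,\omega)$ to a complex analytic space and that this multiple is very semi-ample there. Next I would invoke the relative base-point-free theorem for quasi-log complex analytic spaces (the analytic analog of the base-point-free theorem for quasi-log schemes), applied to $\omega+A$ which is nef and log big over $W$: this yields, after shrinking $Z$ around $W$, that some $\mathcal{O}_X(m(\omega+A))$ is $\pi$-generated on an open neighborhood of $\pi^{-1}(W)$, provided the restriction to ${\rm Nqlc}(X,\omega)$ is already $\pi$-generated there — which is exactly what the previous step arranged. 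One subtlety is matching the $\mathbb{R}$-divisor (or $\mathbb{R}$-line bundle) formulation with the $\mathbb{Q}$-statements of the base-point-free theorem; this is handled by writing $\omega+A = \sum r_i D_i$ with $D_i$ $\mathbb{R}$-Cartier and perturbing $A$ slightly so that the relevant combination becomes $\mathbb{Q}$-Cartier and still nef and log big over $W$, then taking a common multiple — the standard rationality/perturbation trick, valid here because the argument is local around the compact set $W$ and the relevant cones are rational polyhedral near $W$ by the cone theorem in the analytic setting.

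Finally I would assemble the conclusion: having produced a $\pi$-semi-ample $\mathbb{Q}$-line bundle realizing a rational multiple of $\omega+A$, together with $\pi$-ampleness of $A$, one upgrades to semi-ampleness of the full $\mathbb{R}$-line bundle $\omega+A$ over a neighborhood of $W$ by expressing it as a positive combination of globally generated pieces (Definition \ref{defn--R-line-bundle-an-sp}), again using the perturbation into finitely many $\mathbb{Q}$-divisorial directions. The main obstacle is the base-point-free theorem itself in the quasi-log analytic category: one must check that Fujino's analytic foundations \cite{fujino-quasi-log-analytic} indeed provide a relative base-point-free theorem for nef and log big divisors on quasi-log complex analytic spaces over a Stein base around a Stein compact subset, including the compatibility with the prescribed behavior on ${\rm Nqlc}(X,\omega)$; granting that, the remaining steps are the routine perturbation and gluing arguments that already work verbatim in the analytic setting thanks to property (P).
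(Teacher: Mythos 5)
Your proposal follows essentially the same route as the paper, whose proof simply notes that the algebraic argument of \cite[Proof of Theorem 2.23]{has-mmp-normal-pair} works verbatim once one has the base point free theorem (\cite[Theorem 6.1]{fujino-quasi-log-analytic}) and the cone and contraction theorem (\cite[Theorem 9.2]{fujino-quasi-log-analytic}) for quasi-log complex analytic spaces --- precisely the two ingredients you invoke (BPF with the prescribed behaviour on ${\rm Nqlc}(X,\omega)$, plus cone-theorem-based rationality/perturbation to handle the $\mathbb{R}$-coefficients). One correction: the base point free theorem does not require $\omega+A$ to be nef and log big, and your parenthetical claim that $(\omega+A)|_{C}$ is big over $W$ for every qlc center $C$ is false in general (e.g.\ $(\omega+A)|_{C}$ may be numerically trivial); what the theorem needs, and what holds trivially here, is that $(\omega+A)-\omega=A$ is $\pi$-ample together with the assumed semi-ampleness of $(\omega+A)|_{{\rm Nqlc}(X,\omega)}$ near $W$.
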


\begin{proof} The argument of \cite[Proof of Theorem 2.23]{has-mmp-normal-pair} works with no changes because we may use the base point free theorem (\cite[Theorem 6.1]{fujino-quasi-log-analytic}) and the cone and contraction theorem for quasi-log complex analytic spaces (\cite[Theorem 9.2]{fujino-quasi-log-analytic}). \end{proof}

\begin{lem}\label{lem--str-quasi-log}
Let $f \colon Y \to X$ be a contraction of normal analytic varieties and let $(Y,\Delta)$ be a normal pair such that $f({\rm Nlc}(Y,\Delta))\subsetneq X$ and $K_{Y}+\Delta\sim_{\mathbb{R}}f^{*}\omega$ for some globally $\mathbb{R}$-Cartier divisor $\omega$ on $X$. 
Let $g \colon W \to Y$ be a log resolution of $(Y,\Delta)$. 
We define an $\mathbb{R}$-divisor $\Gamma$ on $W$ by 
$K_{W}+\Gamma=g^{*}(K_{Y}+\Delta).$ 
Let $(X,\omega, f\circ g \colon (W,\Gamma) \to X)$ be the structure of a quasi-log complex analytic space. 
Then the structure does not depend on the choice of $g \colon W \to Y$, that is, ${\rm Nqlc}(X, \omega)$ and the set of qlc centers do not depend on $g$.  
\end{lem}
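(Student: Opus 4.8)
The plan is to identify the non-qlc locus and the collection of qlc centers of $[X,\omega]$ with data attached to $(Y,\Delta)$ and $f$ alone, and then to invoke the independence of the non-lc ideal sheaf and of the lc centers of a normal pair from the choice of log resolution. Fix $g\colon W\to Y$ and $\Gamma$ as in the statement, so $K_{W}+\Gamma=g^{*}(K_{Y}+\Delta)$. First note that $\lceil-(\Gamma^{<1})\rceil$ is effective and $g$-exceptional: a non-$g$-exceptional prime divisor in $\Gamma$ is the strict transform of a component of $\Delta$ and hence has coefficient in $[0,\infty)$, so it either lies in $\Gamma^{\geq 1}$ or contributes $0$ to $\lceil-(\Gamma^{<1})\rceil$. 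Therefore $g_{*}\mathcal{O}_{W}(\lceil-(\Gamma^{<1})\rceil)=\mathcal{O}_{Y}$, and since $f$ is a contraction, $(f\circ g)_{*}\mathcal{O}_{W}(\lceil-(\Gamma^{<1})\rceil)=\mathcal{O}_{X}$. Thus the isomorphism in Definition \ref{defn--quasi-log} identifies $\mathcal{I}_{{\rm Nqlc}(X,\omega)}$, as a subsheaf of $\mathcal{O}_{X}$, with
$$
(f\circ g)_{*}\mathcal{O}_{W}(\lceil-(\Gamma^{<1})\rceil-\lfloor\Gamma^{>1}\rfloor)=f_{*}\bigl(g_{*}\mathcal{O}_{W}(\lceil-(\Gamma^{<1})\rceil-\lfloor\Gamma^{>1}\rfloor)\bigr)=f_{*}\mathcal{I}_{{\rm Nlc}(Y,\Delta)},
$$
where the last equality is Definition \ref{defn--nlc-nklt}. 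Since $\mathcal{I}_{{\rm Nlc}(Y,\Delta)}$ does not depend on $g$ (Definition \ref{defn--nlc-nklt}, \cite{fst-suppli-nonlc-ideal}), neither does $\mathcal{I}_{{\rm Nqlc}(X,\omega)}=f_{*}\mathcal{I}_{{\rm Nlc}(Y,\Delta)}$; and because $f$ is proper, the support of ${\rm Nqlc}(X,\omega)$ is $f({\rm Nlc}(Y,\Delta))$, again independent of $g$.

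For the qlc centers, recall that a qlc center of $[X,\omega]$ is the image $(f\circ g)(S)$ of a stratum $S$ of $(W,\Gamma)$ — that is, of $W$ itself or of an irreducible component of an intersection of irreducible components of $\Gamma^{=1}$ — with $(f\circ g)(S)\not\subset{\rm Nqlc}(X,\omega)$. The crux is the standard correspondence between strata and lc centers for a log resolution of a normal pair: since $K_{W}+\Gamma=g^{*}(K_{Y}+\Delta)$ preserves all discrepancies, since the lc centers of the log smooth pair $(W,\Gamma)$ are exactly its strata not contained in ${\rm Nlc}(W,\Gamma)$, and since ${\rm Nlc}(W,\Gamma)\subset g^{-1}({\rm Nlc}(Y,\Delta))$, the set $\{\,g(S)\mid S\text{ a stratum of }(W,\Gamma),\ g(S)\not\subset{\rm Nlc}(Y,\Delta)\,\}$ coincides with $\{Y\}\cup\{\,T\mid T\text{ an lc center of }(Y,\Delta)\,\}$ (the complex analytic analog of the classical statement; see \cite{kollar-mmp}, \cite{fujino-book}, and Definition \ref{defn--lccenter}). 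Now, for a stratum $S$ of $(W,\Gamma)$ we have $(f\circ g)(S)\not\subset{\rm Nqlc}(X,\omega)$, i.e.\ $f(g(S))\not\subset f({\rm Nlc}(Y,\Delta))$, if and only if $g(S)$ lies in the intrinsic set above and its image $f(g(S))$ is not contained in $f({\rm Nlc}(Y,\Delta))$; conversely, for every $V$ in that set with $f(V)\not\subset f({\rm Nlc}(Y,\Delta))$ there is, for the fixed $g$, a stratum $S$ with $g(S)=V$, and then $(f\circ g)(S)=f(V)\not\subset{\rm Nqlc}(X,\omega)$. Hence the collection of qlc centers of $[X,\omega]$ equals
$$
\{\,f(V)\mid V=Y\text{ or }V\text{ an lc center of }(Y,\Delta),\ f(V)\not\subset f({\rm Nlc}(Y,\Delta))\,\},
$$
which does not involve $g$. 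Together with the previous paragraph, this proves the lemma.

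The step I expect to be the main obstacle is the correspondence used above — namely that for the \emph{given} $g$ every lc center of $(Y,\Delta)$ already occurs as $g(S)$ for a stratum $S$ of $(W,\Gamma)$ (not merely after a further blow-up of $W$), and that the non-lc/non-qlc parts match on both sides; this is classical but deserves some care in the complex analytic setting. If one prefers to bypass it, one may instead, given two log resolutions $g_{1},g_{2}$, choose a log resolution $g_{3}\colon W_{3}\to Y$ of $(Y,\Delta)$ dominating both via projective bimeromorphic morphisms $h_{i}\colon W_{3}\to W_{i}$ with $g_{i}\circ h_{i}=g_{3}$ (take a log resolution of the normalization of $W_{1}\times_{Y}W_{2}$), and verify directly that each $h_{i}$ sends the strata of $(W_{3},\Gamma_{3})$ with image outside ${\rm Nqlc}(X,\omega)$ onto those of $(W_{i},\Gamma_{i})$ with the same images in $X$; this reduces everything to a single bimeromorphic morphism of log smooth pairs, handled by a negativity-lemma computation of the coefficients of $\Gamma_{3}$ versus $\Gamma_{i}$.
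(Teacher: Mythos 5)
Your argument is correct, but it takes a different route from the paper. The paper disposes of the lemma by citing the proof of \cite[Proposition 6.3.1]{fujino-book}, which is exactly the fallback you sketch at the end: given two log resolutions $g_{1},g_{2}$, dominate both by a common log resolution and compare the pushforwards and strata through the two projections, so that everything reduces to a single bimeromorphic morphism of log smooth pairs. Your primary argument instead characterizes the quasi-log data intrinsically in terms of $(Y,\Delta)$ and $f$: the identification $\mathcal{I}_{{\rm Nqlc}(X,\omega)}=f_{*}\mathcal{I}_{{\rm Nlc}(Y,\Delta)}$ (using that $\lceil-(\Gamma^{<1})\rceil$ is effective and $g$-exceptional, so $(f\circ g)_{*}\mathcal{O}_{W}(\lceil-(\Gamma^{<1})\rceil)=\mathcal{O}_{X}$, together with the resolution-independence of the non-lc ideal from \cite{fst-suppli-nonlc-ideal}), and the identification of the qlc centers with the images of $Y$ and of the lc centers of $(Y,\Delta)$ not mapped into $f({\rm Nlc}(Y,\Delta))$. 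This buys more than the lemma itself: it also proves, en route, the set-theoretic equalities $f({\rm Nlc}(Y,\Delta))={\rm Nqlc}(X,\omega)$ and the description of qlc centers that the paper records separately in the remark following Definition \ref{defn--lc-trivial-fib-quasi-log}. The one step that carries real content, as you note, is the correspondence between strata of the log smooth pair $(W,\Gamma)$ and lc centers of $(Y,\Delta)$ outside the non-lc locus (crepant pullback preserves discrepancies; on the locus where $(W,\Gamma)$ is log smooth with coefficients at most one, valuations of discrepancy $-1$ have centers exactly the strata); this is the complex analytic version of a classical fact, covered by the properties of lc centers of dlt pairs that the paper asserts to hold analytically (\cite{fujino-what-log-ter}, \cite{kollar-mmp}), so your proof is complete, and the common-resolution alternative is available if one prefers to avoid even that.
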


\begin{proof}
The argument of \cite[Proof of Proposition 6.3.1]{fujino-book} works with no changes. 
\end{proof}

\begin{defn}[Quasi-log complex analytic space induced by normal pair]\label{defn--lc-trivial-fib-quasi-log}
A {\em quasi-log complex analytic space induced by a normal pair}, denoted by $f\colon (Y,\Delta) \to [X,\omega]$ in this paper, consists of a normal pair $(Y,\Delta)$, a contraction $f \colon Y \to X$ of normal analytic varieties, and the structure of a quasi-log complex analytic space $[X,\omega]$ on $X$ which is defined with a log resolution of $(Y,\Delta)$. 
By definition, it follows that
\begin{itemize}
\item
$f({\rm Nlc}(Y,\Delta))\subsetneq X$, and 
\item
$K_{Y}+\Delta\sim_{\mathbb{R}}f^{*}\omega$. 
\end{itemize}
By Lemma \ref{lem--str-quasi-log}, the structure of $[X,\omega]$ does not depend on the choice of a log resolution of $(Y,\Delta)$.

In the case of $X=Y$ and $\omega=K_{Y}+\Delta$, we may identify $[Y,K_{Y}+\Delta]$ with the normal pair $(Y,\Delta)$. 
In this situation, it follows that ${\rm Nqlc}(Y,K_{Y}+\Delta)={\rm Nlc}(Y,\Delta)$ and ${\rm Nqklt}(Y,K_{Y}+\Delta)={\rm Nklt}(Y,\Delta)$ as closed complex analytic spaces of $Y$. 
\end{defn}

\begin{rem}[cf.~{\cite[Remark 2.26]{has-mmp-normal-pair}}]
Let $f\colon (Y,\Delta) \to [X,\omega]$ be a quasi-log complex analytic space induced by a normal pair $(Y,\Delta)$. 
Then $f({\rm Nklt}(Y,\Delta))={\rm Nqklt}(X,\omega)$ and $f({\rm Nlc}(Y,\Delta))={\rm Nqlc}(X,\omega)$ set-theoretically.  
\end{rem}

\begin{lem}[cf.~{\cite[Lemma 2.28]{has-mmp-normal-pair}}]\label{lem--can-bundle-formula}
Let $f\colon (Y,\Delta) \to [X,\omega]$ be a quasi-log complex analytic space induced by a normal pair such that $\Delta$ is a $\mathbb{Q}$-divisor and $\omega$ is $\mathbb{Q}$-Cartier. 
Let $\pi \colon X \to Z$ be a projective morphism from $X$ to a Stein space $Z$, and let $W \subset Z$ be a Stein compact subset.  
Let $A$ be a $\pi$-ample $\mathbb{R}$-divisor on $X$. 
Then, after shrinking $Z$ around $W$, there exists a normal pair $(X,G)$ such that the relation $K_{X}+G \sim_{\mathbb{R},\,Z}\omega+A$ holds, ${\rm Nklt}(X,G)$ is a closed complex analytic subspace of ${\rm Nqklt}(X,\omega)$, and the support of ${\rm Nklt}(X,G)$ is equal to the support of ${\rm Nqklt}(X,\omega)$. 
\end{lem}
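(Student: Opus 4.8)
The plan is to reduce to the canonical bundle formula for lc-trivial fibrations and the theory of generalized pairs, following \cite[Lemma 2.28]{has-mmp-normal-pair}, and verify that each ingredient is available in the complex analytic setting. First I would choose a log resolution $g\colon W\to Y$ of $(Y,\Delta)$ and write $K_W+\Gamma=g^*(K_Y+\Delta)$, so that $[X,\omega]$ is the quasi-log structure associated to $(f\circ g)\colon(W,\Gamma)\to X$. Since $f({\rm Nlc}(Y,\Delta))\subsetneq X$, the generic fiber of $f\circ g$ meets only components of $\Gamma$ with coefficient $<1$; passing to the lc-trivial fibration obtained by restricting to the Stein preimage, one extracts a discriminant $\mathbb{Q}$-divisor $\Delta_X$ and a moduli $\mathbb{Q}$-$b$-divisor $M$ on $X$ with $K_X+\Delta_X+M_X\sim_{\mathbb{Q}}\omega$ (after the usual adjustment, $\sim_{\mathbb{Q},Z}$ after shrinking $Z$ around $W$). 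In the analytic category the relevant canonical bundle formula and the nef/semi-ampleness properties of the moduli part over a Stein base are by now available (one cites the analytic versions of the results of Fujino--Mori and Ambro; compare the algebraic references invoked in \cite{has-mmp-normal-pair}), and $\Delta_X$ is effective with ${\rm Supp}\,\Delta_X$ governed by the non-klt locus, so that ${\rm Nklt}(X,\Delta_X)$ has support equal to ${\rm Nqklt}(X,\omega)$ — this is the point of Definition \ref{defn--lc-trivial-fib-quasi-log} combined with the remark identifying $f({\rm Nklt}(Y,\Delta))={\rm Nqklt}(X,\omega)$.

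Next I would absorb the moduli part and the ample perturbation. After shrinking $Z$ around $W$ so that $M_X$ is nef over $W$ and (by the analytic base point free theorem for the moduli part, or directly by semi-ampleness over a Stein germ) $M_X+\tfrac12 A$ is $\mathbb{R}$-linearly equivalent over $Z$ to an effective $\mathbb{R}$-divisor whose support contains no component of $\Delta_X$ and whose non-klt contribution is empty, we may write $M_X+\tfrac12A\sim_{\mathbb{R},Z}M'\geq 0$ with $(X,\Delta_X+M')$ still having ${\rm Nklt}(X,\Delta_X+M')$ equal, as a set, to ${\rm Nqklt}(X,\omega)$. Similarly the remaining $\tfrac12A$, being $\pi$-ample, is $\mathbb{R}$-linearly equivalent over $Z$ to a general effective $\mathbb{R}$-divisor $A'\geq0$ avoiding all the relevant centers and adding nothing to the non-klt locus; here one uses the analytic version of Bertini-type genericity over a Stein base (the analog of \cite[Lemmas 2.4, 2.5, Theorem 2.6]{has-mmp-normal-pair}, noted to hold in the excerpt). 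Setting $G:=\Delta_X+M'+A'$ gives a normal pair with $K_X+G\sim_{\mathbb{R},Z}K_X+\Delta_X+M_X+A\sim_{\mathbb{R},Z}\omega+A$, and with ${\rm Nklt}(X,G)$ a closed analytic subspace of ${\rm Nqklt}(X,\omega)$ of the same support, as required. One must also check $G$ is genuinely a boundary, i.e. $K_X+G$ is $\mathbb{R}$-Cartier: this follows since $\omega$ is $\mathbb{Q}$-Cartier by hypothesis and $A,M',A'$ are $\mathbb{R}$-Cartier.

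The main obstacle I anticipate is the canonical bundle formula itself in the analytic setting: one needs the discriminant divisor to be a well-defined $\mathbb{Q}$-divisor on $X$ (finiteness of the relevant log canonical thresholds, which uses (P4) and shrinking $Z$ so that only finitely many components of $\Gamma$ are $f$-horizontal over $W$), and one needs the moduli $b$-divisor to be $\mathbb{Q}$-Cartier and nef over $W$ after a bimeromorphic modification, together with the fact that over a Stein germ a nef moduli divisor can be moved to an effective one avoiding prescribed subvarieties. In the algebraic case this is packaged in \cite[Section 2]{has-mmp-normal-pair} and ultimately rests on results of Ambro, Fujino--Gongyo and Fujino--Mori; I would cite the corresponding analytic statements (from \cite{fujino-analytic-bchm}, \cite{fujino-quasi-log-analytic}, \cite{eh-analytic-mmp}) and, where a direct reference is unavailable, note that the algebraic proof localizes and applies verbatim over the Stein base after the standard shrinking. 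Apart from this, the argument is the same bookkeeping with $\mathbb{R}$-divisors as in \cite[Proof of Lemma 2.28]{has-mmp-normal-pair}.
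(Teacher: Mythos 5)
Your plan is essentially the paper's own argument: the paper proves this lemma by invoking the analytic canonical bundle formula (citing the analytic analog of Fujino--Gongyo together with \cite[Theorem 21.4]{fujino-analytic-bchm}, made available precisely because $\Delta$ and $\omega$ have $\mathbb{Q}$-coefficients) and then running the proof of \cite[Lemma 2.28]{has-mmp-normal-pair} verbatim, which is exactly the discriminant-plus-moduli decomposition and absorption of the moduli part and $A$ into a general effective divisor that you describe. Only cosmetic caveats: your resolution is named $W$, clashing with the Stein compact $W\subset Z$ of the statement, and the precise references are as above rather than Fujino--Mori/Ambro, but mathematically the route is the same.
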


\begin{proof}
Since $\Delta$ and $\omega$ are $\mathbb{Q}$-divisors, we can use \cite[Theorem 21.4]{fujino-analytic-bchm} and the analytic analog of \cite{fg-bundle} instead of \cite[Corollary 5.2]{fujino-hashizume-adjunction}. 
Thus, we have the analytic analog of the canonical bundle formula in our situation. 
Then \cite[Proof of Lemma 2.28]{has-mmp-normal-pair} works without any change. 
Hence, Lemma \ref{lem--can-bundle-formula} holds. 
\end{proof}

\begin{thm}[cf.~{\cite[Theorem 2.29]{has-mmp-normal-pair}}]\label{thm--vanishing-quasi-log}
Let $f\colon (Y,\Delta) \to [X,\omega]$ be a quasi-log complex analytic space induced by a normal pair. 
Fix $X'$ a union of ${\rm Nqlc}(X, \omega)$ and (possibly empty) some qlc centers of $[X,\omega]$. 
Let $\pi \colon X \to Z$ be a projective morphism to a complex analytic space $Z$, and let $L$ be a globally $\mathbb{Q}$-Cartier Weil divisor on $X$ such that 
\begin{itemize}
\item
$L$ is Cartier on a neighborhood of $X'$, 
\item
$f^{*}L$ is a Weil divisor on $Y$, and 
\item
$L-\omega$ is $\pi$-nef and $\pi$-log big. 
\end{itemize}
Then $R^{i}\pi_{*}(\mathcal{I}_{X'}\otimes_{\mathcal{O}_{X}} \mathcal{O}_{X}(L))=0$ for every $i>0$, where $\mathcal{I}_{X'}$ is the defining ideal sheaf of $X'$.
\end{thm}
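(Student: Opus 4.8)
The plan is to reduce the statement to the vanishing theorem for quasi-log complex analytic spaces, proved by Fujino, together with Lemma~\ref{lem--str-quasi-log} which guarantees that the quasi-log structure induced by $(Y,\Delta)$ is independent of the chosen log resolution. First I would set up notation: take a log resolution $g\colon W \to Y$ of $(Y,\Delta)$, write $K_{W}+\Gamma = g^{*}(K_{Y}+\Delta)$, so that $(X,\omega,\,f\circ g\colon(W,\Gamma)\to X)$ is the quasi-log structure defining $[X,\omega]$, and by Lemma~\ref{lem--str-quasi-log} the set ${\rm Nqlc}(X,\omega)$ and the qlc centers do not depend on $g$. Since $X'$ is the union of ${\rm Nqlc}(X,\omega)$ with a choice of qlc centers, it is a closed complex analytic subspace of $X$ with a natural structure inherited from the quasi-log structure, and $\mathcal{I}_{X'}$ makes sense. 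The hypotheses on $L$ — that it is Cartier near $X'$, that $f^{*}L$ (equivalently $(f\circ g)^{*}L$) is a Weil divisor on $W$, and that $L-\omega$ is $\pi$-nef and $\pi$-log big — are precisely the hypotheses needed to invoke the analytic vanishing theorem for quasi-log spaces.

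The key step is to apply the complex analytic version of the vanishing theorem for quasi-log structures (the analytic analog of the Ambro–Fujino-type vanishing used in \cite{has-mmp-normal-pair}, available now via \cite{fujino-quasi-log-analytic}). Concretely, on the simple normal crossing pair $(W,\Gamma)$ one writes $(f\circ g)^{*}(L-\omega) \sim_{\mathbb{R}} (f\circ g)^{*}L - (K_{W}+\Gamma)$, uses that $(f\circ g)^{*}(L-\omega)$ is $\pi\circ f\circ g$-nef and log big, and then the standard Ambro-type injectivity/vanishing package on the embedded simple normal crossing pair produces $R^{i}(\pi\circ f\circ g)_{*}$-vanishing for the appropriate twisted sheaf; pushing down through $f\circ g$ and using the isomorphism $\mathcal{I}_{X'} \cong (f\circ g)_{*}\mathcal{O}_{W}(\lceil -(\Gamma^{<1})\rceil - \lfloor \Gamma^{>1}\rfloor - (\text{components of }\Gamma^{=1}\text{ mapping into }X'))$ yields $R^{i}\pi_{*}(\mathcal{I}_{X'}\otimes_{\mathcal{O}_{X}}\mathcal{O}_{X}(L)) = 0$ for $i>0$. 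This is exactly how \cite[Proof of Theorem 2.29]{has-mmp-normal-pair} proceeds in the algebraic case, and since all the required inputs — existence of log resolutions, the structure theory of embedded simple normal crossing pairs, the base point free theorem, and the cone and contraction theorem for quasi-log complex analytic spaces — are available in the analytic setting, the argument transfers verbatim.

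The main obstacle I anticipate is purely bookkeeping rather than conceptual: one must carefully track which strata of $(W,\Gamma)$ map into $X'$ versus which map to qlc centers not contained in $X'$, so that the correct ideal sheaf $\mathcal{I}_{X'}$ appears on the quasi-log side, and one must verify that the Cartier-near-$X'$ hypothesis on $L$ together with the Weil-divisor hypothesis on $f^{*}L$ is exactly what is needed for the pullback computation to be valid (this is where the hypotheses are used in an essential way). A secondary technical point is confirming that ``$\pi$-log big'' on $X$ pulls back to log bigness on each stratum of $W$ in the analytic category; this follows from the analytic theory of stratified log bigness developed alongside the quasi-log machinery. Given these, the proof is a direct citation: \emph{the argument of \cite[Proof of Theorem 2.29]{has-mmp-normal-pair} works with no changes, using \cite[Theorem~6.1]{fujino-quasi-log-analytic} and the vanishing theorem for quasi-log complex analytic spaces in \cite{fujino-quasi-log-analytic}.}
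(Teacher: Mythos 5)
Your proposal takes essentially the same route as the paper: one observes that the algebraic argument of \cite[Proof of Theorem 2.29]{has-mmp-normal-pair} transfers with no changes once the relevant vanishing machinery for embedded simple normal crossing pairs is available in the analytic category, which is exactly how the paper proceeds. One correction to your citations: the inputs actually needed are the torsion-free theorem (\cite[Theorem 3.5 (i)]{fujino-quasi-log-analytic}) and the vanishing theorem for analytic simple normal crossing pairs (\cite[Theorem 3.8]{fujino-quasi-log-analytic}), not the base point free theorem \cite[Theorem 6.1]{fujino-quasi-log-analytic} or the cone and contraction theorem (those are the tools for Theorem \ref{thm--abundance-quasi-log}); your second paragraph's ``Ambro-type injectivity/vanishing package'' is the right idea, so those are the results you should cite.
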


\begin{proof}
The argument in \cite[Proof of Theorem 2.29]{has-mmp-normal-pair} works with no changes because we can use the torsion-free theorem (\cite[Theorem 3.5 (i)]{fujino-quasi-log-analytic}) and the vanishing theorem for analytic simple normal crossing pairs \cite[Theorem 3.8]{fujino-quasi-log-analytic}. 
\end{proof}

\end{quasi-log}

\section{Running minimal model program}\label{sec--running-mmp}

In this section, we define a minimal model and a good minimal model for normal pairs, and we study how to construct a sequence of an MMP for normal pairs whose non-nef locus is disjoint from the non-lc locus.

\subsection{Minimal model and minimal model program}

In this subsection, we define minimal models for normal pairs. 
For the definition of some models for lc pairs, see \cite[Definition 3.1]{eh-analytic-mmp}. 

\begin{defn}[Minimal model, cf.~{\cite[Definition 3.1]{has-mmp-normal-pair}}]\label{defn--minmodel}
Let $X \to Z$ be a projective morphism from a normal analytic variety $X$ to a complex analytic space $Z$, and let $(X,\Delta)$ be a normal pair. 
Let $W \subset Z$ be a subset. 
Let $(X',\Delta')$ be a normal pair with a projective morphism $X' \to Z$, and let $\phi \colon X \dashrightarrow X'$ be a bimeromorphic map over $Z$. 
We say that $(X',\Delta')$ is a {\em minimal model of $(X,\Delta)$ over $Z$ around $W$} if 
\begin{itemize}
\item
for any prime divisor $P$ on $X$, we have 
$$a(P,X,\Delta) \leq a(P,X',\Delta'),$$
and the strict inequality holds if $P$ is $\phi$-exceptional,   
\item
for any prime divisor $P'$ on $X'$, we have 
$${\rm coeff}_{P'}(\Delta')=-a(P',X,\Delta),$$  
and the inequality $a(P',X,\Delta)\leq -1$ holds if $P'$ is $\phi^{-1}$-exceptional, and
\item
$K_{X'}+\Delta'$ is nef over $W$. 
\end{itemize}
We say that a minimal model $(X',\Delta')$ of $(X,\Delta)$ over $Z$ around $W$ is a {\em good minimal model} if $K_{X'}+\Delta'$ is semi-ample over a neighborhood of $W$. 
A {\em $\mathbb{Q}$-factorial minimal model} (resp.~{\em a $\mathbb{Q}$-factorial good minimal model}) {\em of $(X,\Delta)$ over $Z$ around $W$} is a minimal model (resp.~a good minimal model) $(X',\Delta')$ of $(X,\Delta)$ over $Z$ around $W$ such that $X'$ is $\mathbb{Q}$-factorial over $W$. 
\end{defn}

\begin{rem} The complex analytic analog of \cite[Lemma 3.2, Proposition 3,4]{has-mmp-normal-pair} holds since the proofs in the algebraic case work well in our situation. \end{rem}

Next, we define a step of a minimal model program. 
For the log minimal model program for lc pairs in the complex analytic setting, see \cite[Subsection 3.2]{eh-analytic-mmp}.

\begin{defn}[Minimal model program]\label{defn--mmp-fullgeneral} 
Let $\pi \colon X \to Z$ be a projective morphism from a normal analytic variety $X$ to a complex analytic space $Z$, and let $W \subset Z$ be a subset. 
Let $D$ be an $\mathbb{R}$-Cartier divisor on $X$. 

A {\em step of a $D$-MMP over $Z$ around $W$} is the diagram
 $$
\xymatrix@R=16pt{
X\ar@{-->}[rr]^-{\phi}\ar[dr]\ar[ddr]_-{\pi}&&X'\ar[dl]\ar[ddl]^-{\pi'}\\
&V\ar[d]\\
&Z
}
$$
consisting of normal analytic varieties $X$, $X'$, and $V$, which are projective over $Z$, such that
\begin{itemize}
\item
$X \to V$ is a bimeromorphic morphism and $X' \to V$ is a small bimeromorphic morphism, 
\item
$-D$ is ample over $V$, and 
\item
$\phi_{*}D$ is $\mathbb{R}$-Cartier and ample over $V$. 
\end{itemize}
Sometimes we call $X \to V$ a {\em $D$-negative extremal contraction}. 

A {\em sequence of steps of a $D$-MMP over $Z$ around $W$} is a pair of sequences $\{Z_{i}\}_{i \geq 1}$ and $\{\phi_{i}\colon X_{i} \dashrightarrow X'_{i}\}_{i\geq 1}$, where each $Z_{i} \subset Z$ is an open subset and $\phi_{i}$ is a bimeomorphic contraction of normal analytic varieties over $Z_{i}$, such that
\begin{itemize}
\item
$Z_{i}\supset W$ and $Z_{i}\supset Z_{i+1}$ for every $i\geq 1$, 
\item
$X_{1}=\pi^{-1}(Z_{1})$ and $X_{i+1}=X'_{i}\times_{Z_{i}}Z_{i+1}$ for every $i\geq 1$, and
\item
if we put $D_{1}:=D|_{X_{1}}$ and $D_{i+1}:=(\phi_{i*}D_{i})|_{X_{i+1}}$ for every $i\geq 1$, then 
$$X_{i} \dashrightarrow X'_{i}$$ 
is a step of a $D_{i}$-MMP over $Z_{i}$ around $W$.  
\end{itemize}
For the simplicity of notation, a sequence of steps of a $D$-MMP over $Z$ around $W$ is denoted by
$$X_{1} \dashrightarrow X_{2} \dashrightarrow \cdots \dashrightarrow X_{i}\dashrightarrow \cdots.$$
When $D$ is of the form $K_{X}+B$ for some normal pair $(X,B)$, then a sequence of steps of a $(K_{X}+B)$-MMP over $Z$ around $W$ is denoted by
$$(X_{1},B_{1})\dashrightarrow (X_{2},B_{2}) \dashrightarrow \cdots \dashrightarrow (X_{i},B_{i}) \dashrightarrow \cdots.$$

Let $H$ be an $\mathbb{R}$-Cartier divisor on $X$ such that $D+\lambda H$ is nef over $Z$ for some $\lambda \in \mathbb{R}_{\geq 0}$. 
We put 
$$H_{1}:=H|_{X_{1}} \qquad {\rm and} \qquad H_{i+1}:=(\phi_{i*}H_{i})|_{X_{i+1}}$$
for each $i \geq 1$. 
Then a {\em sequence of steps of a $D$-MMP over $Z$ around $W$ with scaling of $H$} is a sequence of steps of a $D$-MMP over $Z$ around $W$
$$X=:X_{1}\dashrightarrow X_{2} \dashrightarrow \cdots \dashrightarrow X_{i} \dashrightarrow \cdots$$
with the data $\{Z_{i}\}_{i \geq 1}$ and $\{\phi_{i}\colon X_{i} \dashrightarrow X'_{i}\}_{i\geq 1}$ such that

\begin{itemize}
\item
$H_{i}$ is $\mathbb{R}$-Cartier, 
\item
the nonnegative real number
$$\lambda_{i}:={\rm inf}\{\mu \in \mathbb{R}_{\geq 0}\,|\, \text{$D_{i}+\mu H_{i}$ is nef over $W$}\}$$
is well defined, and 
\item
$(D_{i}+\lambda_{i}H_{i})\cdot C_{i}=0$ for any curve $C_{i} \subset X_{i}$ over $W$ that is contracted by the $D_{i}$-negative extremal contraction of the MMP. 
\end{itemize}
\end{defn}

\begin{rem}\label{rem--mmp-fullgeneral}
With notation as in Definition \ref{defn--mmp-fullgeneral}, the morphism $X \to V$ in a step of a $D$-MMP over $Z$ does not necessarily satisfy $\rho(X/Z;W)-\rho(X/Z;W)=1$. 
In particular, $X \to V$ and $X' \to V$ in Definition \ref{defn--mmp-fullgeneral} can be biholomorphisms. 
\end{rem}

\begin{defn}[{\cite[Definition 3.6]{eh-analytic-mmp}}]\label{def--mmp-represent-bimerocont}
With notation as in Definition \ref{defn--mmp-fullgeneral}, let
$$X=:X_{1}\dashrightarrow X_{2} \dashrightarrow \cdots \dashrightarrow X_{i} \dashrightarrow \cdots$$
be a sequence of steps of a $D$-MMP over $Z$ around $W$ defined with the data $\{Z_{i}\}_{i \geq 1}$ and $\{\phi_{i}\colon X_{i} \dashrightarrow X'_{i}\}_{i\geq 1}$. 
We say that the $D$-MMP is {\em represented by bimeromorphic contractions} if $Z_{i}=Z$ for all $i \geq 1$. 
\end{defn}

Note that the complex analytic setting analog of \cite[Remark 3.7]{has-mmp-normal-pair} holds.

\begin{lem}[cf.~{\cite[Lemma 3.8]{has-mmp-normal-pair}}]\label{lem--extremal-ray}
Let $[X,\omega]$ be a quasi-log complex analytic space such that $X$ is a normal analytic variety. 
Let $\pi \colon X \to Z$ be a projective morphism to a Stein space $Z$, and let $W \subset Z$ be a compact subset such that $\rho (X/Z; W) < \infty$. 
Suppose that ${\rm NNef}(\omega/Z) \cap {\rm Nqlc}(X, \omega) \cap \pi^{-1}(W)=\emptyset$ (in particular, $\omega$ is $\pi$-pseudo-effective since ${\rm NNef}(\omega/Z)$ is well defined). 
Let $A$ be an effective globally $\mathbb{R}$-Cartier divisor on $X$ such that $\omega+A$ is nef over $W$ and ${\rm Nqlc}(X,\omega+A) = {\rm Nqlc}(X, \omega)$ set-theoretically. 
We put 
$$\lambda:={\rm inf}\{\mu \in \mathbb{R}_{\geq0}\,|\, \omega+\mu A \text{ is nef over $W$}\,\}.$$
Then $\lambda=0$ or there exists an $\omega$-negative extremal ray $R$ of $\overline{\rm NE}(X/Z;W)$ such that $R$ is rational and relatively ample at infinity (\cite[Definition 6.7.2]{fujino-book}) and $(\omega+\lambda A)\cdot R=0$. 
\end{lem}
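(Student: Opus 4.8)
The plan is to reduce the statement to the relative cone theorem for quasi-log complex analytic spaces (\cite[Theorem 9.2]{fujino-quasi-log-analytic}) applied to $[X,\omega]$ over $Z$ around $W$. First I would dispose of the trivial case: if $\lambda=0$ there is nothing to prove, so assume $\lambda>0$. Then $\omega+\lambda A$ is nef over $W$ by continuity of nefness (the nef cone is closed), while $\omega+\mu A$ fails to be nef over $W$ for every $\mu<\lambda$; in particular $\omega$ itself is not nef over $W$, so there is a curve $C$ contracted over a point of $W$ with $(\omega\cdot C)<0$. Since $\rho(X/Z;W)<\infty$, the cone $\overline{\rm NE}(X/Z;W)$ lives in a finite-dimensional space, and $A$ being effective and globally $\mathbb R$-Cartier lets us speak of the half-space $\{z\,|\,(\omega+\lambda A)\cdot z\ge 0\}$ cutting out a proper face.

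The key step is to produce the extremal ray. I would look at the face $F:=(\omega+\lambda A)^{\perp}\cap\overline{\rm NE}(X/Z;W)$, which is nonzero by the choice of $\lambda$ (otherwise $\omega+(\lambda-\epsilon)A$ would still be nef over $W$ for small $\epsilon>0$, contradicting infimality), and on which $\omega$ is negative (since $A$ is effective and $\omega+\lambda A$ is trivial on $F$, any class in $F$ meeting $A$ nonnegatively has $(\omega\cdot z)\le 0$, and the curves with $(\omega\cdot z)=0$ and $(A\cdot z)=0$ can be split off because $A$ is big-type enough after perturbation — here one uses that $\mathrm{NNef}(\omega/Z)\cap\mathrm{Nqlc}(X,\omega)\cap\pi^{-1}(W)=\emptyset$ together with Theorem \ref{thm--nonnef-negativecurve} to guarantee $\omega$-negative curves avoid the non-qlc locus). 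Then I invoke the cone theorem for quasi-log complex analytic spaces to decompose the $\omega$-negative part of $\overline{\rm NE}(X/Z;W)$ into countably many rational extremal rays that are relatively ample at infinity in the sense of \cite[Definition 6.7.2]{fujino-book}; since $F$ is a face on which $\omega$ is negative and which meets this $\omega$-negative part, $F$ contains at least one such extremal ray $R$, and then automatically $(\omega+\lambda A)\cdot R=0$ because $R\subset F$.

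The main obstacle I expect is the book-keeping needed to ensure that the relevant $\omega$-negative curves — and hence the extremal ray $R$ — are disjoint from ${\rm Nqlc}(X,\omega)$, so that the cone theorem's description of $\omega$-negative rays genuinely applies and gives rationality and relative ampleness at infinity. This is exactly where the hypotheses ${\rm NNef}(\omega/Z)\cap{\rm Nqlc}(X,\omega)\cap\pi^{-1}(W)=\emptyset$ and ${\rm Nqlc}(X,\omega+A)={\rm Nqlc}(X,\omega)$ are used: the first forces every $\omega$-negative curve over $W$ to escape ${\rm Nqlc}(X,\omega)$ (via Theorem \ref{thm--nonnef-negativecurve}, since a curve inside ${\rm NNef}(\omega/Z)^{c}$ pairs nonnegatively with $\omega$), and the second guarantees the quasi-log structure is stable under adding $A$ so that the cone theorem applies uniformly along the perturbation $\omega+\mu A$. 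Granting this, the remaining steps are the standard convexity arguments in a finite-dimensional cone, which I would only sketch; the detailed version follows \cite[Proof of Lemma 3.8]{has-mmp-normal-pair} verbatim once the cone and contraction theorem of \cite{fujino-quasi-log-analytic} is substituted for its algebraic counterpart.
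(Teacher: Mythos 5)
Your overall strategy coincides with the paper's: the proof there is precisely "run the argument of \cite[Proof of Lemma 3.8]{has-mmp-normal-pair}, replacing the algebraic cone theorem by the cone and contraction theorem for quasi-log complex analytic spaces," and your use of Theorem \ref{thm--nonnef-negativecurve} to see that every $\omega$-negative curve over $W$ lies in ${\rm NNef}(\omega/Z)\cap\pi^{-1}(W)$, hence misses ${\rm Nqlc}(X,\omega)$, is exactly the right way to ensure that the $\omega$-negative part of $\overline{\rm NE}(X/Z;W)$ is accounted for by the rays $R_{j}$ of the cone theorem rather than by the non-qlc part.

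However, your central convexity step has a genuine gap. From the face $F:=(\omega+\lambda A)^{\perp}\cap\overline{\rm NE}(X/Z;W)$ being nonzero you conclude that $F$ "contains at least one such extremal ray $R$", but this is not a formal consequence of the cone decomposition: the countably many rays $R_{j}$ may only accumulate against the hyperplane $(\omega+\lambda A)^{\perp}$, and a supporting face can consist entirely of limit classes that are not among the $R_{j}$ (nor need they be rational or relatively ample at infinity). The missing ingredient is the length estimate for extremal rays: each $\omega$-negative ray is spanned by a curve $C_{j}$ with $0<-\,\omega\cdot C_{j}$ bounded above, and it is this bound (together with the nefness of $\omega+\lambda A$ over $W$) that forces the supremum of the nef thresholds $\mu_{j}$ computed on the rays $R_{j}$ to be attained, i.e.\ $\lambda=\mu_{j_{0}}$ for some $j_{0}$, which is the ray you want. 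This is exactly why the paper cites not only the cone and contraction theorem but also the length of extremal curves from \cite[Section 9]{fujino-quasi-log-analytic}; calling the remaining steps "standard convexity arguments in a finite-dimensional cone" understates what is needed. A smaller point: your claim that $\omega$ is negative on all of $F$, justified by "splitting off" classes with $A\cdot z=0$ because $A$ is "big-type enough after perturbation", is unjustified ($A$ is only assumed effective, and classes $z\in F$ with $A\cdot z=0$ satisfy $\omega\cdot z=0$); fortunately it is also unnecessary, since one only needs a single $\omega$-negative ray with $(\omega+\lambda A)\cdot R=0$, which the length-bound argument provides.
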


\begin{proof}
The argument of \cite[Proof of Lemma 3.8]{has-mmp-normal-pair} works with no changes because we can use the cone and contraction theorem for quasi-log complex analytic spaces and the length of extremal curves (\cite[Section 9]{fujino-quasi-log-analytic}). 
\end{proof}

\subsection{Construction of minimal model program}
The goal of this subsection is to prove the compex analytic analog of \cite[Corollaries 3.12, 3.13]{has-mmp-normal-pair}. 
To prove them, we need the complex analytic analog of \cite[Theorem 3.9]{has-mmp-normal-pair}. 
However, the proof is not verbatim because the proof of \cite[Theorem 3.9]{has-mmp-normal-pair} needs the gluing of MMP, which is not easy in the complex analytic setting. 
Thus, we need to prepare some lemmas for the proof. 

\begin{lem}[cf.~{\cite[Lemma 3.2]{has-flop}}]\label{lem--rho-decrease}
Let $X \to Z$ be a projective morphism from a normal analytic variety $X$ to a complex analytic space $Z$, and let $W \subset Z$ be a subset. 
Suppose that $X$ is $\mathbb{Q}$-factorial over $W$. 
Let $X \dashrightarrow X'$ be a bimeromorphic contraction over $Z$ to a normal complex variety $X'$. 
Then $\rho(X'/Z;W) \leq \rho(X/Z;W)$. 
\end{lem}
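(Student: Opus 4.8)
The strategy is to produce, from any finite collection of curves on $X'$ whose classes span a given finite-dimensional subspace of $N_1(X'/Z;W)$, a collection of curves on $X$ that dominates it under the induced pushforward map on $1$-cycles, and to dually compare the spaces of $\mathbb{R}$-Cartier divisor classes. Concretely, I would first fix notation: write $\phi\colon X \dashrightarrow X'$ for the bimeromorphic contraction over $Z$, let $p\colon \Gamma \to X$ and $q\colon \Gamma \to X'$ be a common resolution (a smooth analytic variety resolving the graph of $\phi$, projective over $Z$), and note that since $\phi^{-1}$ contracts no divisor, the $p$-exceptional locus contains all $q$-exceptional divisors. The key point is that $\phi$ induces a well-defined pushforward $\phi_*\colon N^1(X/Z;W) \to N^1(X'/Z;W)$ on classes of $\mathbb{R}$-Cartier divisors which is \emph{surjective}: given an $\mathbb{R}$-Cartier divisor $D'$ on $X'$, its proper transform $D := \phi^{-1}_* D'$ is $\mathbb{R}$-Cartier because $X$ is $\mathbb{Q}$-factorial over $W$ (here one works over a neighborhood of $W$, using that $\rho(X/Z;W)$ is computed with divisors/curves meeting $\pi^{-1}(W)$), and $\phi_* D = D'$. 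Hence $\rho(X'/Z;W) = \dim_{\mathbb{R}} N^1(X'/Z;W) \leq \dim_{\mathbb{R}} N^1(X/Z;W) = \rho(X/Z;W)$, provided one checks that $\phi_*$ is \emph{well-defined}, i.e.\ sends numerically trivial classes to numerically trivial classes.

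For well-definedness, suppose $D$ is an $\mathbb{R}$-Cartier divisor on $X$ with $D \equiv 0$ over $W$ (meaning $D \cdot C = 0$ for every curve $C$ on $X$ contracted over $W$ by $\pi$), and set $D' = \phi_* D$. I would show $D' \cdot C' = 0$ for every such curve $C'$ on $X'$. Lift $C'$ to a curve $\tilde C \subset \Gamma$ with $q(\tilde C) = C'$, and let $C := p(\tilde C) \subset X$. If $C$ is a point there is a bit more care needed (then $C'$ is contracted by $q$, hence lies in the $q$-exceptional locus, and one must argue via a general member of a covering family of curves on $X'$ to reduce to the case $C$ is a curve); otherwise, using the projection formula on $p$ and $q$ and the fact that $p^*D$ and $q^*D'$ differ by a $p$-exceptional (equivalently $q$-exceptional, up to the proper transforms of common divisors) $\mathbb{R}$-divisor $E$ supported on the $p$-exceptional locus, one gets $D' \cdot C' = q^*D' \cdot \tilde C = (p^*D + E)\cdot \tilde C = D \cdot C + E\cdot \tilde C$. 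The first term vanishes by hypothesis; for the second, I would invoke the negativity lemma / the structure of $\phi$ as a bimeromorphic contraction to see that $E\cdot\tilde C$ contributes $0$ — more precisely, I would choose the covering family so that a general $\tilde C$ is not contained in $\mathrm{Supp}\,E$, and handle the components of $E$ whose image in $X'$ is lower-dimensional using that $\phi^{-1}$ is again a bimeromorphic contraction. This is essentially the argument of \cite[Lemma 3.2]{has-flop} adapted verbatim; the complex analytic subtleties (finiteness of $N_1(X/Z;W)$, existence of enough curves over $W$) are already packaged into the hypothesis $\rho(X/Z;W) < \infty$ being meaningful and into the standard set-up of \cite{fujino-analytic-bchm}, \cite{eh-analytic-mmp}.

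The main obstacle I anticipate is the curve-lifting step when a curve $C'$ on $X'$ is contracted by the common resolution to a point on $X$: in the algebraic setting one freely uses that through a general point of any subvariety there pass curves in a covering family and one intersects with general such curves, and one must make sure the analytic analog (over a Stein base, around the compact $W$, after shrinking $Z$) is available — but this kind of bend-and-break/covering-family input over $W$ is exactly what the cited analytic MMP papers establish, so I would cite those rather than reprove them. Everything else — the projection formula, $\mathbb{Q}$-factoriality over $W$ giving $\mathbb{R}$-Cartierness of proper transforms, surjectivity of $\phi_*$ on divisor classes, and the resulting inequality of dimensions — is routine. I would therefore structure the written proof as: (1) set up the common resolution and recall $\mathbb{Q}$-factoriality over $W$; (2) define $\phi_*$ on $\mathbb{R}$-Cartier classes and prove surjectivity; (3) prove $\phi_*$ preserves numerical triviality over $W$ via the projection formula and negativity of the exceptional part, citing \cite[Lemma 3.2]{has-flop} for the analogous computation; (4) conclude $\rho(X'/Z;W)\le\rho(X/Z;W)$ by comparing dimensions of the (finite-dimensional) spaces $N^1(-/Z;W)$.
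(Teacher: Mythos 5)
Your overall strategy (compare $N^{1}$-spaces via strict transforms, using $\mathbb{Q}$-factoriality of $X$ over $W$) is the right one and is essentially what the cited argument of \cite[Lemma 3.2]{has-flop} does, but as written the proposal has two genuine problems. First, your central map $\phi_{*}\colon N^{1}(X/Z;W)\to N^{1}(X'/Z;W)$ is not well defined for a reason that precedes numerical equivalence: for an arbitrary $\mathbb{R}$-Cartier divisor $D$ on $X$, the pushforward $\phi_{*}D$ need not be $\mathbb{R}$-Cartier on $X'$, since $X'$ is only normal (think of a small contraction of a $\mathbb{Q}$-factorial variety, which is exactly the situation where $\rho$ drops). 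The correct arrangement is the transposed one: consider the linear map sending an $\mathbb{R}$-Cartier divisor $D'$ on $X'$ to the class of $\phi^{-1}_{*}D'$ in $N^{1}(X/Z;W)$, and prove that $\phi^{-1}_{*}D'\equiv 0$ over $W$ forces $D'\equiv 0$ over $W$; this shows strict transforms of independent classes stay independent and yields $\rho(X'/Z;W)\leq\rho(X/Z;W)$ (note also that no finiteness of $\rho(X/Z;W)$ is needed for this formulation, as the paper remarks). Second, your bookkeeping of exceptional divisors is reversed: since $\phi^{-1}$ contracts no divisor, every $p$-exceptional divisor on the common resolution is $q$-exceptional, not the other way around, and $\phi^{-1}$ is \emph{not} ``again a bimeromorphic contraction'' unless $\phi$ is small. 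Consequently the divisor $E$ with $q^{*}D'=p^{*}D+E$ (for $D=\phi^{-1}_{*}D'$) is $q$-exceptional but in general \emph{not} supported on the $p$-exceptional locus: it contains the strict transforms of the divisors contracted by $\phi$.

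The more serious gap is the key vanishing itself. Your plan to choose covering families with general $\tilde{C}\not\subset\mathrm{Supp}\,E$ cannot close the argument: numerical triviality of $D'$ over $W$ must be tested against \emph{every} curve over $W$, including rigid curves inside the (codimension $\geq 2$) image of the exceptional locus, and even when $\tilde{C}\not\subset\mathrm{Supp}\,E$ there is no reason for $E\cdot\tilde{C}$ to vanish. The missing idea, which is the actual content of \cite[Lemma 3.2]{has-flop} and is exactly how this paper argues in the proof of Lemma \ref{lem--bimerocont-small}, is the negativity lemma: if $D=\phi^{-1}_{*}D'\equiv 0$ over $W$, then $E=q^{*}D'-p^{*}D$ is $q$-exceptional (here one uses the \emph{correct} direction of the containment above, which gives $q_{*}p^{*}D=\phi_{*}D=D'$) and numerically trivial over $W$ relative to $q$, because for a $q$-contracted curve $c$ over $W$ one has $E\cdot c=-D\cdot p_{*}c=0$. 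Applying \cite[Corollary 2.16]{eh-analytic-mmp} to $q$ and $E$, after shrinking $Z$ around $W$ one gets $E=0$, hence $q^{*}D'=p^{*}D$ over a neighborhood of $\pi^{-1}(W)$; then for any curve $C'$ over $W$ and any curve $\tilde{C}$ on the resolution mapping finitely onto it, the projection formula gives $D'\cdot C'=0$ with no case analysis (the troublesome case where $p(\tilde{C})$ is a point disappears). Without this step your ``well-definedness'' check, which is the entire content of the lemma, is not established.
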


\begin{proof}
The argument of \cite[Proof of Lemma 3.2]{has-flop} works with no changes. 
Note that we do not need to assume $\rho(X/Z;W)<\infty$. 
\end{proof}

\begin{lem}\label{lem--bimerocont-small}
Let $X \to Z$ be a projective morphism from a normal analytic variety to a complex analytic space, and let $W \subset Z$ be a compact subset such that $\rho(X/Z;W)<\infty$.  
Suppose that we are given the data $\{Z_{i}\}_{i \geq 1}$ and $\{\phi_{i}\colon X_{i} \dashrightarrow X'_{i}\}_{i\geq 1}$, where $Z_{i} \subset Z$ are open subsets and $\phi_{i}$ are bimeomorphic contractions of normal analytic varieties over $Z_{i}$, such that
\begin{itemize}
\item
$Z_{i}\supset W$ and $Z_{i}\supset Z_{i+1}$ for every $i\geq 1$, and
\item
$X_{1}=\pi^{-1}(Z_{1})$ and $X_{i+1}=X'_{i}\times_{Z_{i}}Z_{i+1}$ for every $i\geq 1$. 
\end{itemize}
 Then there exists an index $i_{0}$ such that $\phi_{i}$ does not have any exceptional divisor whose image on $Z_{i}$ intersects $W$ for any $i \geq i_{0}$.  
In particular, $X_{i} \dashrightarrow X'_{i}$ is small over an open neighborhood $U_{i}$ of $W$ for any $i \geq i_{0}$. 
\end{lem}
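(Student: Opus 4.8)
The plan is to exploit the fact that a bimeromorphic contraction can only contract divisors, and that each such contraction strictly drops the relative Picard number over $W$, which is finite by hypothesis. First I would introduce, for each $i$, the open subset $Z_i' \subset Z_i$ consisting of those points near which $\phi_i$ is small; more precisely, let $E_i \subset X_i$ be the union of the $\phi_i$-exceptional prime divisors and set $Z_i' := Z_i \setminus \overline{(X_i \to Z_i)(E_i)}$ (using that the image of a divisor is an analytic subset, shrinking if necessary so that it is closed). The claim ``$\phi_i$ has no exceptional divisor whose image meets $W$'' is exactly the statement $W \subset Z_i'$, equivalently that $(X_i \to Z_i)(E_i) \cap W = \emptyset$. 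So it suffices to show that for all but finitely many $i$ this holds.

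Next I would argue by contradiction: suppose there are infinitely many indices $i_1 < i_2 < \cdots$ at which $\phi_{i_k}$ contracts a divisor meeting $W$ over $Z_{i_k}$. Passing to a common open neighborhood of $W$ and using Lemma \ref{lem--rho-decrease}, at each such step the relative Picard number strictly decreases: $\rho(X'_{i_k}/Z; W) < \rho(X_{i_k}/Z; W)$, because contracting a divisor over $W$ removes at least one numerical class. (Here one needs to know $X_{i_k}$ is $\mathbb{Q}$-factorial over $W$, or otherwise argue directly that $\phi_{i_k}$ being a genuine divisorial contraction over $W$ forces the drop; in the MMP setting $\mathbb{Q}$-factoriality over $W$ is preserved, so Lemma \ref{lem--rho-decrease} applies verbatim after the base changes $X_{i+1} = X'_i \times_{Z_i} Z_{i+1}$, which do not change $\rho$ over $W$.) Between consecutive steps $\rho$ is non-increasing by Lemma \ref{lem--rho-decrease} applied to the remaining bimeromorphic contractions and unchanged under the fiber-product restrictions. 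Hence $\rho(X_i/Z; W)$ is a non-increasing sequence of non-negative integers that strictly decreases infinitely often, contradicting finiteness.

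Therefore only finitely many $\phi_i$ contract a divisor over $W$, so there is $i_0$ with the desired property for all $i \geq i_0$. For the ``in particular'' statement: for $i \geq i_0$ the set $(X_i \to Z_i)(E_i)$ is an analytic subset of $Z_i$ disjoint from the compact set $W$, so $U_i := Z_i \setminus (X_i \to Z_i)(E_i)$ is an open neighborhood of $W$, and over $U_i$ the map $\phi_i$ has no exceptional divisor; since $\phi_i$ is already a bimeromorphic contraction, $\phi_i^{-1}$ contracts no divisor either, so $\phi_i$ restricted over $U_i$ is small.

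The main obstacle I anticipate is making the bookkeeping of relative Picard numbers under the base changes $X_{i+1} = X'_i \times_{Z_i} Z_{i+1}$ and under repeated shrinking of $Z$ completely rigorous — in particular ensuring that $\rho(\,\cdot\,/Z;W)$ is genuinely well-defined and finite throughout (which is where the hypothesis $\rho(X/Z;W) < \infty$ enters), that it is invariant under restriction to open neighborhoods of $W$, and that the image of an exceptional divisor is a well-behaved closed analytic subset after shrinking. The geometric heart — a divisorial contraction over $W$ drops $\rho$ — is standard via Lemma \ref{lem--rho-decrease}, so the real work is the analytic-topology care needed to run the finiteness argument uniformly.
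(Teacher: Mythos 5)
Your argument hinges on the claim that each step that contracts a divisor over $W$ strictly decreases $\rho(\,\cdot\,/Z;W)$, while all other steps leave it non-increasing. That monotonicity is not available under the hypotheses of Lemma \ref{lem--bimerocont-small}: the $\phi_{i}$ are merely bimeromorphic contractions between \emph{normal} varieties, with no $\mathbb{Q}$-factoriality over $W$ and no MMP structure (no extremal contractions, no statement that $X'_{i}$ is obtained by contracting a ray). Lemma \ref{lem--rho-decrease} explicitly requires the source to be $\mathbb{Q}$-factorial over $W$, and without that the inequality can genuinely fail --- for instance, a bimeromorphic contraction from a non-$\mathbb{Q}$-factorial variety to a small modification of it can \emph{increase} the relative Picard number --- and even in the $\mathbb{Q}$-factorial case the strict drop at a divisorial step needs the extremal-contraction structure you do not have. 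Your parenthetical fix (``in the MMP setting $\mathbb{Q}$-factoriality over $W$ is preserved'') imports hypotheses the lemma does not contain; indeed the lemma is applied in the proof of Lemma \ref{lem--mmp-normal-pair-run} to MMPs on pairs that are not assumed $\mathbb{Q}$-factorial, so one cannot reduce to that setting. This is a genuine gap, not a bookkeeping issue.

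The paper's proof avoids Picard-number monotonicity altogether. One first replaces the initial $X$ by a resolution (only the top of the tower is changed), sets $d:=\rho(X/Z;W)$, and argues that the composite maps $X\dashrightarrow X_{j}$ cannot accumulate $d+1$ distinct exceptional prime divisors $E_{1},\dots,E_{d+1}$ whose images meet $W$: such divisors would satisfy a nontrivial relation $\sum_{k}r_{k}E_{k}\equiv 0$ over $W$ in the $d$-dimensional space $N^{1}(X/Z;W)$ (they are Cartier since $X$ is smooth), and pulling back to a resolution of indeterminacy $\tilde{X}\to X$ of $X\dashrightarrow X_{j}$, the divisor $\sum_{k}r_{k}f^{*}E_{k}$ is exceptional over $X_{j}$ and numerically trivial there, so the negativity lemma forces $\sum_{k}r_{k}f^{*}E_{k}=0$, contradicting that the $E_{k}$ are distinct primes. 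So the correct finiteness mechanism is ``at most $\rho(X/Z;W)$ divisors can ever be contracted over $W$,'' proved via linear dependence in $N^{1}$ plus negativity on a smooth model, rather than a descending chain of Picard numbers. If you want to keep the spirit of your approach, you would have to reformulate it in exactly this way; as written, the central step does not hold in the stated generality.
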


\begin{proof}
By replacing $X$ with a resolution, we may assume that $X$ is non-singular. 
We set $d :=\rho(X/Z;W)$. 
Suppose by contradiction that there exist infinitely many indices $i$ such that $X_{i} \dashrightarrow X'_{i}$ has an exceptional divisor whose image on $Z_{i}$ intersects $W$. 
Then we can find an index $j$ such that after replacing $Z$ with $Z_{j}$, the bimeromorphic contraction $X \dashrightarrow X_{j}$ has distinct exceptional prime divisors $E_{1},\,\cdots,\,E_{d+1}$ whose images on $Z$ intersect $W$. 
Since $d =\rho(X/Z;W)$, we can find real numbers $r_{1},\,\cdots,\,r_{d+1}$ such that at least one of them is not zero and $\sum_{k=1}^{d+1}r_{k}E_{k}$ is numerically trivial over $W$. 
We set $W_{j}$ as the inverse image of $W$ to $X_{j}$, and let $f \colon \tilde{X} \to X$ be a resolution of the indeterminacy of $X \dashrightarrow X_{j}$. 
Then all $f^{*}E_{k}$ are exceptional over $X_{j}$ and $\sum_{k=1}^{d+1}r_{k}f^{*}E_{k} $ is numerically trivial over $W_{j}$.  
By applying the negativity lemma \cite[Corollary 2.16]{eh-analytic-mmp} to $\tilde{X} \to X_{j}$ and $\sum_{k=1}^{d+1}r_{k}f^{*}E_{k}$, after shrinking $Z$ around $W$ we have $\sum_{k=1}^{d+1}r_{k}f^{*}E_{k} = 0$ as an $\mathbb{R}$-divisor. 
Then $\sum_{k=1}^{d+1}r_{k}E_{k} = 0$ as an $\mathbb{R}$-divisor, from which we get a contradiction because $E_{1},\,\cdots,\,E_{d+1}$ are distinct prime divisors. 
Thus, Lemma \ref{lem--bimerocont-small} holds. 
\end{proof}

The following result is Lemma \ref{lem--bimerocont-small} in the algebraic setting. 
The proof is verbatim. 

\begin{lem}
Let $X \to Z$ be a projective morphism of normal quasi-projective algebraic varieties. 
Let $X \dashrightarrow X_{1}\dashrightarrow \cdots \dashrightarrow X_{i}\dashrightarrow \cdots$
be a sequence of birational contractions of algebraic varieties, where $X_{i}$ are normal and projective over $Z$. 
Then there exists an index $i_{0}$ such that $X_{i} \dashrightarrow X_{i+1}$ is small for every $i \geq i_{0}$.
\end{lem}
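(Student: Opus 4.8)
The statement to prove is the algebraic analog of Lemma \ref{lem--bimerocont-small}: given a projective morphism $X \to Z$ of normal quasi-projective varieties and a sequence of birational contractions $X \dashrightarrow X_1 \dashrightarrow \cdots \dashrightarrow X_i \dashrightarrow \cdots$ with all $X_i$ normal and projective over $Z$, there exists $i_0$ such that $X_i \dashrightarrow X_{i+1}$ is small for every $i \geq i_0$.

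The plan is to mimic the proof of Lemma \ref{lem--bimerocont-small}, but in the algebraic category the argument is slightly cleaner because the Picard number $\rho(X/Z)$ is automatically finite and there is no need to shrink the base. First I would replace $X$ by a resolution of singularities, so $X$ is smooth; since each composite map $X \dashrightarrow X_i$ is a birational contraction (its inverse contracts no divisor), any prime divisor exceptional over $X_i$ comes from a prime divisor on $X$, and the images of these divisors under successive maps are the divisors contracted along the sequence. Set $d := \rho(X/Z) < \infty$. Suppose for contradiction that infinitely many of the maps $X_i \dashrightarrow X_{i+1}$ contract a divisor; then, composing, there is an index $j$ for which $X \dashrightarrow X_j$ contracts at least $d+1$ distinct prime divisors $E_1, \dots, E_{d+1}$ on $X$.

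The key step is then the same linear-algebra-plus-negativity argument: because $d = \rho(X/Z)$, the classes $[E_1], \dots, [E_{d+1}]$ in $N^1(X/Z)$ are linearly dependent, so there are real numbers $r_1, \dots, r_{d+1}$, not all zero, with $\sum_k r_k E_k \equiv 0$ over $Z$. Taking a common resolution $f \colon \tilde X \to X$ of the indeterminacy of $X \dashrightarrow X_j$, the divisors $f^* E_k$ are all exceptional over $X_j$, and $\sum_k r_k f^* E_k$ is numerically trivial over $X_j$ (since it is numerically trivial over $Z$ and $X_j$ lies over $Z$). Applying the negativity lemma to $\tilde X \to X_j$ forces $\sum_k r_k f^* E_k = 0$, hence $\sum_k r_k E_k = 0$ as a divisor on $X$; but the $E_k$ are distinct prime divisors, so all $r_k = 0$, a contradiction. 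Therefore only finitely many maps contract a divisor, and beyond that index every $X_i \dashrightarrow X_{i+1}$ is a small birational contraction whose inverse also contracts no divisor, hence small. The main (and only real) obstacle is bookkeeping the claim that the divisors contracted by $X \dashrightarrow X_j$ are exactly the union of those contracted along the individual steps — this is a standard fact about birational contractions — and making sure the count reaches $d+1$; everything else is a direct transcription of the analytic proof with the shrinking-of-$Z$ step deleted.

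\begin{proof}
By replacing $X$ with a resolution of singularities, we may assume that $X$ is non-singular. Set $d := \rho(X/Z)$, which is finite. Since each map in the sequence is a birational contraction, for each $i$ the map $X \dashrightarrow X_i$ is a birational contraction, and the prime divisors on $X$ that are exceptional over $X_i$ are precisely the strict transforms of the prime divisors contracted in the first $i$ steps. Suppose by contradiction that there exist infinitely many indices $i$ for which $X_i \dashrightarrow X_{i+1}$ contracts a divisor. Then there is an index $j$ such that $X \dashrightarrow X_j$ contracts at least $d+1$ distinct prime divisors $E_1, \dots, E_{d+1}$ on $X$. Because $d = \rho(X/Z)$, the classes of $E_1, \dots, E_{d+1}$ in $N^1(X/Z)$ are linearly dependent, so there are real numbers $r_1, \dots, r_{d+1}$, not all zero, with $\sum_{k=1}^{d+1} r_k E_k \equiv 0$ over $Z$. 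Let $f \colon \tilde X \to X$ be a resolution of the indeterminacy of $X \dashrightarrow X_j$. Then every $f^* E_k$ is exceptional over $X_j$, and $\sum_{k=1}^{d+1} r_k f^* E_k$ is numerically trivial over $X_j$. Applying the negativity lemma \cite[Corollary 2.16]{eh-analytic-mmp} to $\tilde X \to X_j$ and $\sum_{k=1}^{d+1} r_k f^* E_k$, we obtain $\sum_{k=1}^{d+1} r_k f^* E_k = 0$ as an $\mathbb{R}$-divisor, hence $\sum_{k=1}^{d+1} r_k E_k = 0$ as an $\mathbb{R}$-divisor on $X$. Since $E_1, \dots, E_{d+1}$ are distinct prime divisors, this forces $r_1 = \cdots = r_{d+1} = 0$, a contradiction. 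Therefore only finitely many steps contract a divisor, so there is an index $i_0$ such that $X_i \dashrightarrow X_{i+1}$ contracts no divisor for every $i \geq i_0$; being a birational contraction with no contracted divisor and whose inverse also contracts no divisor, each such map is small.
\end{proof}
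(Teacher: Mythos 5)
Your proof is correct and follows essentially the same route as the paper, which explicitly states that the algebraic lemma is proved verbatim as the analytic Lemma \ref{lem--bimerocont-small}: resolve $X$, count at least $\rho(X/Z)+1$ distinct contracted prime divisors, produce a nontrivial relation numerically trivial over $Z$, and kill it with the negativity lemma applied on a resolution of indeterminacy over $X_{j}$. The only differences are the expected simplifications in the algebraic category (no compact set $W$, no shrinking of $Z$, and $\rho(X/Z)<\infty$ automatically), which is exactly what the paper intends.
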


The following lemma is very similar to \cite[Theorem 3.15]{eh-analytic-mmp}. 
As in Remark \ref{rem--mmp-fullgeneral}, the only difference between Lemma \ref{lem--termination-mmp-generalized} below and \cite[Theorem 3.15]{eh-analytic-mmp} is the relative Picard number of the extremal contraction in the each steps of the MMP. 
However, the difference is harmless, and thus we may use \cite[Proof of Theorem 3.15]{eh-analytic-mmp} directly. 
Hence, we skip the proof.

\begin{lem}[cf.~{\cite[Theorem 3.15]{eh-analytic-mmp}}]\label{lem--termination-mmp-generalized}
Let $\pi \colon X \to Z$ be a projective morphism from a normal analytic variety $X$ to a Stein space $Z$, and let $W \subset Z$ be a compact subset such that $\pi$ and $W$ satisfy (P). 
Let $(X,\Delta)$ be an lc pair. 
Let $H$ be an effective $\mathbb{R}$-Cartier divisor on $X$ such that $(X,\Delta+H)$ is an lc pair, and let
$$(X_{1},\Delta_{1})\dashrightarrow (X_{2},\Delta_{2})\dashrightarrow \cdots \dashrightarrow (X_{i},\Delta_{i})\dashrightarrow \cdots$$
be a sequence of steps of a $(K_{X}+\Delta)$-MMP over $Z$ around $W$ with scaling of $H$ in the sense of Definition \ref{defn--mmp-fullgeneral}. 
For each $i \geq 1$, put
$$\lambda_{i}:={\rm inf}\{\mu \in \mathbb{R}_{\geq 0}\,|\, \text{$K_{X_{i}}+\Delta_{i}+\mu H_{i}$ {\rm is nef over} $W$}\}$$
If ${\rm lim}_{i \to \infty}\lambda_{i}=0$ and $(X,\Delta)$ has a weak lc model over $Z$ around $W$, then we have $\lambda_{n}=0$ for some $n \in \mathbb{Z}_{\geq 0}$. 
\end{lem}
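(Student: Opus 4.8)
The plan is to follow the structure of the proof of \cite[Theorem 3.15]{eh-analytic-mmp}, which itself reduces the termination-of-scaling statement to a finiteness-of-models argument. First I would record the standing setup: by Lemma \ref{lem--bimerocont-small}, after discarding finitely many initial steps we may assume every $\phi_i\colon X_i\dashrightarrow X'_i$ is small over an open neighborhood $U_i$ of $W$, and (after shrinking $Z$) that the MMP is represented by bimeromorphic contractions. The sequence $\{\lambda_i\}$ is non-increasing, so $\lambda_i\to\lambda_\infty\geq 0$; the hypothesis is $\lambda_\infty=0$, and we want $\lambda_n=0$ for some $n$. Suppose for contradiction that $\lambda_i>0$ for all $i$.

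The key step is the following: for a fixed positive real number $t$ slightly smaller than some $\lambda_{i_0}$, the pairs $(X_i,\Delta_i+\lambda_i H_i)$ (equivalently, $(X_i,\Delta_i+tH_i)$ once $\lambda_i<t$) all share the same weak lc model. Concretely, because $(X,\Delta)$ has a weak lc model over $Z$ around $W$, and $H$ is an effective $\mathbb{R}$-Cartier divisor with $(X,\Delta+H)$ lc, a continuity/convexity argument (as in \cite{eh-analytic-mmp} and ultimately the analytic BCHM package \cite{fujino-analytic-bchm}) shows that for all sufficiently small $t>0$ the pair $(X,\Delta+tH)$ also has a weak lc model over $Z$ around $W$, and the steps of our MMP are all steps of a $(K_X+\Delta+tH)$-MMP over $Z$ around $W$ once $\lambda_i\leq t$. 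Then each step strictly drops $K_{X_i}+\Delta_i+tH_i$ negatively against a contracted curve, so the MMP with scaling for $(K_X+\Delta+tH)$ cannot be infinite: by the termination results available in the analytic setting for pairs admitting a weak lc (minimal) model, it terminates with $K_{X_n}+\Delta_n+tH_n$ nef over $W$, forcing $\lambda_n\leq t$ to be replaced by a genuine stabilization and ultimately $\lambda_n=0$, contradicting $\lambda_n>0$.

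In more detail, the argument I would carry out is: (i) reduce to the small case and to an MMP represented by bimeromorphic contractions via Lemmas \ref{lem--bimerocont-small} and \ref{lem--rho-decrease}, so the relative Picard number $\rho(X_i/Z;W)$ is non-increasing and bounded; (ii) observe that the difference from \cite[Theorem 3.15]{eh-analytic-mmp} is only that the extremal contractions here are not required to drop the relative Picard number by exactly one (Remark \ref{rem--mmp-fullgeneral}), and check that this does not affect the proof, since one only uses that each step is $(K_{X_i}+\Delta_i+tH_i)$-negative for the relevant $t$ and that the total space of models is controlled; (iii) invoke the existence of a weak lc model for $(X,\Delta+tH)$ over $Z$ around $W$ for small $t>0$ — this is where the hypothesis "$(X,\Delta)$ has a weak lc model over $Z$ around $W$" and the analytic analogs of the length-of-extremal-rays and special termination results are used — and conclude termination, hence $\lambda_n=0$.

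The main obstacle I expect is step (iii): propagating the existence of a weak lc model from $(X,\Delta)$ to the nearby pairs $(X,\Delta+tH)$ and then running the termination argument purely over a neighborhood of the compact set $W$, since in the analytic setting one must constantly shrink $Z$ around $W$ and keep track that finitely many shrinkings suffice. This is precisely the point where \cite[Proof of Theorem 3.15]{eh-analytic-mmp} already does the work, so I would cite it directly rather than reprove it; the only genuinely new verification is that the weaker hypothesis on the extremal contractions (Remark \ref{rem--mmp-fullgeneral}) is harmless, which is a routine check.
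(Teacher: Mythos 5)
Your proposal matches the paper's treatment: the paper simply observes that the only difference from \cite[Theorem 3.15]{eh-analytic-mmp} is the relative Picard number condition on the extremal contractions (Remark \ref{rem--mmp-fullgeneral}), declares this harmless, and invokes \cite[Proof of Theorem 3.15]{eh-analytic-mmp} directly without further argument, which is exactly your conclusion. Your intermediate sketch of how that cited proof runs is extra detail the paper does not supply, but your final reduction is essentially the same as the paper's.
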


The following technical lemma is crucial to prove \cite[Theorem 3.9]{has-mmp-normal-pair} in the complex analytic setting.

\begin{lem}\label{lem--mmp-normal-pair-run}
Let $f \colon X \to \widetilde{V}$ be a projective morphism of normal analytic varieties, and let $\pi \colon \widetilde{V} \to Z$ be a projective morphism to a Stein space $Z$. 
Let $(X,\Delta)$ be a normal pair. 
Let $W \subset Z$ be a compact subset such that $\pi \circ f \colon X \to Z$ and $W$ satisfy (P). 
Set $\widetilde{W}:=\pi^{-1}(W)\subset \widetilde{V}$. 
Suppose that the following conditions hold:
\begin{itemize}
\item
There exists an open subset $\widetilde{U} \subset \widetilde{V}$, which may not contain $\widetilde{W}$, such that ${\rm Nlc}(X,\Delta) \subset f^{-1}(\widetilde{U})$ and $(K_{X}+\Delta)|_{f^{-1}(\widetilde{U})}$ is semi-ample over $\widetilde{U}$, and
\item 
for each point $w \in \widetilde{W}\setminus \widetilde{U}$, there exists an open subset $\widetilde{U}_{w} \ni w$ of $\widetilde{V}$, which may not contain $\widetilde{W}$, such that $(f^{-1}(\widetilde{U}_{w}), \Delta|_{f^{-1}(\widetilde{U}_{w})})$ has a good minimal model over $\widetilde{U}_{w}$ around $w$. 
\end{itemize}
We set $(X_{1},\Delta_{1})=(X,\Delta)$. 
Then, after shrinking $Z$ around $W$, there exists a sequence of  steps of a $(K_{X_{1}}+\Delta_{1})$-MMP over $\widetilde{V}$ around $\widetilde{W}$
$$
\xymatrix{
(X_{1},\Delta_{1})\ar@{-->}[r]&\cdots \ar@{-->}[r]& (X_{i},\Delta_{i})\ar@{-->}[rr]\ar[dr]&& (X_{i+1},\Delta_{i+1})\ar[dl] \ar@{-->}[r]&\cdots\ar@{-->}[r]&(X_{m},\Delta_{m}), \\
&&&V_{i}
}
$$
which is represented by bimeromorphic contractions, to a good minimal model $(X_{m},\Delta_{m})$ over $\widetilde{V}$ around $\widetilde{W}$ such that 
\begin{itemize}
\item
the non-biholomorphic locus of the MMP is disjoint from ${\rm Nlc}(X,\Delta)$, and
\item
$\rho(X_{i}/\widetilde{V};\widetilde{W})-\rho(V_{i}/\widetilde{V};\widetilde{W})=1$ for every $i \geq 1$.
\end{itemize}
\end{lem}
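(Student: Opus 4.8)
The plan is to run the MMP locally over each point of $\widetilde{W}$ and then patch the local pieces together into a global MMP over $\widetilde{V}$ around $\widetilde{W}$. First I would separate $\widetilde{W}$ into the part lying over $\widetilde{U}$ and the finitely many "bad" points; by the compactness of $\widetilde{W}$ and property (P) we may cover $\widetilde{W}\setminus\widetilde{U}$ by finitely many of the Stein open sets $\widetilde{U}_{w}$ provided by the second hypothesis, on each of which $(f^{-1}(\widetilde{U}_{w}),\Delta|_{f^{-1}(\widetilde{U}_{w})})$ has a good minimal model over $\widetilde{U}_{w}$ around $w$. Over $\widetilde{U}$ itself, $K_{X}+\Delta$ is already semi-ample, so a good minimal model exists trivially there. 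Thus over every point of $\widetilde{W}$ a good minimal model exists for the restricted pair; the content of the lemma is to realize one global sequence of steps that simultaneously terminates near all these points while keeping the non-biholomorphic locus away from ${\rm Nlc}(X,\Delta)$ and keeping the relative Picard number drop equal to $1$.

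The key steps, in order, would be as follows. After shrinking $Z$ around $W$ I would use Theorem \ref{thm--dlt-blowup} (or the analogous reductions described in the excerpt) to reduce to the $\mathbb{Q}$-factorial case, so that Lemma \ref{lem--rho-decrease} applies and $\rho(X/\widetilde{V};\widetilde{W})<\infty$. Then I would run a $(K_{X_1}+\Delta_1)$-MMP over $\widetilde{V}$ around $\widetilde{W}$ with scaling of a suitable $\pi\circ f$-ample divisor $H$, using the cone and contraction theorem for quasi-log complex analytic spaces (via Lemma \ref{lem--extremal-ray} applied to the quasi-log structure $[X,K_X+\Delta]$ induced by the normal pair, recalling ${\rm Nqlc}(X,K_X+\Delta)={\rm Nlc}(X,\Delta)$) to pick each extremal ray. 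Each contracted ray can be chosen to be $(K_{X_i}+\Delta_i)$-negative; since the semi-ampleness of $(K_X+\Delta)|_{f^{-1}(\widetilde U)}$ (equivalently, the fact that $K_X+\Delta$ is nef over $\widetilde U$ after the scaling bookkeeping) forces the negative rays to avoid $f^{-1}(\widetilde U)\supset{\rm Nlc}(X,\Delta)$, the non-biholomorphic locus stays disjoint from ${\rm Nlc}(X,\Delta)$, and away from ${\rm Nlc}$ the pair is lc, so each step is an ordinary lc MMP step and Lemma \ref{lem--bimerocont-small} shows the steps become small over a neighborhood of $\widetilde{W}$ — hence the MMP is represented by bimeromorphic contractions after shrinking $Z$. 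Finally, to see termination and that the output is a good minimal model, I would invoke the local good minimal models: over each $\widetilde{U}_w$ the pair has a good (in particular weak) lc model, so Lemma \ref{lem--termination-mmp-generalized} gives $\lambda_n=0$ locally over $w$; combining the finitely many local terminations (and the trivial one over $\widetilde U$) yields a single index $m$ after which $K_{X_m}+\Delta_m$ is nef over $\widetilde{W}$, and Theorem \ref{thm--abundance-quasi-log} applied to the quasi-log space upgrades nefness to semi-ampleness over a neighborhood of $\widetilde{W}$, using that $(K_{X_m}+\Delta_m)|_{{\rm Nlc}}$ stays semi-ample because the MMP is biholomorphic near ${\rm Nlc}$.

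The main obstacle I expect is the \emph{globalization}: making one MMP sequence over $\widetilde{V}$ around all of $\widetilde{W}$, rather than separate MMPs over the various $\widetilde{U}_w$, which in the algebraic case is handled by gluing MMPs but "is not easy in the complex analytic setting" (as the authors note). The difficulty is that the choices of extremal contractions over different neighborhoods of points of $\widetilde{W}$ need not be compatible, and one cannot simply take a common refinement of the open cover and run the MMP there because that would change the ambient space. The way around this is to run the MMP with scaling over the \emph{whole} $\widetilde{W}$ — where each step contracts a single extremal ray of $\overline{\rm NE}(X_i/\widetilde V;\widetilde W)$ (forcing the Picard number drop $1$, unlike the general steps of Definition \ref{defn--mmp-fullgeneral}) — and then argue termination by restricting to each $\widetilde{U}_w$: termination over every point of the compact set $\widetilde W$, together with the fact that only finitely many of the $\widetilde{U}_w$ are needed, yields termination over $\widetilde W$ itself. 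Carefully checking that the scaling numbers $\lambda_i$ decrease to $0$ (so that Lemma \ref{lem--termination-mmp-generalized} applies over each $\widetilde U_w$) and that the shrinking of $Z$ can be done once and for all, uniformly in $i$, is where the real work lies.
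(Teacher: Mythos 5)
Your overall shape for the termination part (restrict the MMP to finitely many charts $\widetilde{U}_{w}$ covering the compact set $\widetilde{W}$, apply Lemma \ref{lem--termination-mmp-generalized} on each, and conclude by compactness) agrees with the paper, but two essential points are left unresolved. First, you never construct the individual steps of the MMP. The cone and contraction theorem (via Lemma \ref{lem--extremal-ray}) gives the extremal contraction $\varphi_i\colon X_i\to V_i$, and you correctly observe that it is biholomorphic near ${\rm Nlc}(X,\Delta)$ because $K_X+\Delta$ is semi-ample over $\widetilde{U}$; but the existence of the small modification $X_{i+1}\to V_i$ with $K_{X_{i+1}}+\Delta_{i+1}$ ample over $V_i$ (the flip) does not follow from saying ``away from ${\rm Nlc}$ the pair is lc, so each step is an ordinary lc MMP step'': lc flip existence only gives the flipped space over $V''_i=V_i\setminus\varphi_i({\rm Nlc}(X,\Delta))$, and one must produce $X_{i+1}$ over all of $V_i$. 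The paper does this by gluing the lc flip over $V''_i$ (from \cite[Theorem 1.7]{fujino-analytic-lcabundance}) with the identity over the open set where $\varphi_i$ is a biholomorphism, using Remmert--Stein extension; this gluing is the point where the complex analytic setting requires a new argument, and it is absent from your plan. Relatedly, ``represented by bimeromorphic contractions'' means $Z_i=Z$ for all $i$ (Definition \ref{def--mmp-represent-bimerocont}); Lemma \ref{lem--bimerocont-small} says nothing about this, so your use of it there is a non sequitur (in the paper it serves a different purpose, see below).

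Second, you flag but do not solve the problem that the scaling coefficients need not tend to $0$, and this is where the real content of the proof lies. The paper first normalizes the pair by finitely many replacements (using Lemma \ref{lem--bimerocont-small} to kill divisorial contractions and Lemma \ref{lem--rho-decrease} applied to a resolution to bound $\rho$) so that every MMP sequence induces bijections $N^1(X_i/\widetilde{V};\widetilde{W})\to N^1(X_{i+1}/\widetilde{V};\widetilde{W})$; only then can one run the argument of \cite[Lemma 2.7]{eh-analytic-mmp-2} to choose a special scaling divisor $H_1$ with $\lambda\neq\lambda_i$ for all $i$, where $\lambda=\lim_i\lambda_i$. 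Moreover, since a priori $\lambda$ may be positive, Lemma \ref{lem--termination-mmp-generalized} is not applied to the $(K+\Delta)$-MMP directly but to the restricted sequence reinterpreted as a $(K+\Delta+\lambda H)$-MMP with scaling of $H$ (whose thresholds are $\lambda_i-\lambda\to 0$), using \cite[Theorem 1.1]{eh-analytic-mmp} and \cite[Theorem 1.5]{fujino-analytic-lcabundance} to get good minimal models of $(f^{-1}(\widetilde{U}_w),(\Delta+\lambda H)|)$; the compactness argument then yields $\lambda=\lambda_m=0$. Without this normalization, the special choice of $H_1$, and the $\lambda$-shift device, your plan has no mechanism to force termination. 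Finally, your opening reduction to the $\mathbb{Q}$-factorial case via dlt blow-up is neither in the statement nor needed, and it is not harmless: the dlt blow-up changes the pair and you do not explain how to descend the resulting MMP back to $X$.
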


\begin{proof}
We divide the proof into several steps. 

\begin{step5}\label{step1--lem--mmp-normal-pair-run}
Let $A_{1}$ be an arbitrary $(\pi \circ f)$-ample $\mathbb{R}$-Cartier divisor on $X_{1}$. 
In this step, we prove that we can construct a sequence of steps of a $(K_{X_{1}}+\Delta_{1})$-MMP over $\widetilde{V}$ around $\widetilde{W}$ with scaling of $A_{1}$ 
$$
\xymatrix{
(X_{1},\Delta_{1})\ar@{-->}[r]&\cdots \ar@{-->}[r]& (X_{i},\Delta_{i})\ar@{-->}[rr]\ar[dr]&& (X_{i+1},\Delta_{i+1})\ar[dl] \ar@{-->}[r]&\cdots\ar@{-->}[r]&\cdots \\
&&&V_{i}
}
$$
such that 
\begin{itemize}
\item
the non-biholomorphic locus of the MMP is disjoint from ${\rm Nlc}(X,\Delta)$, and
\item
$\rho(X_{i}/\widetilde{V};\widetilde{W})-\rho(V_{i}/\widetilde{V};\widetilde{W})=1$ for every $i \geq 1$.
\end{itemize}

Shrinking $Z$ around $W$, we may assume that $A_{1}$ is globally $\mathbb{R}$-Cartier. 
Rescaling $A_{1}$ and choosing $A_{1}$ as a general member of $|A_{1}/Z|_{\mathbb{R}}$, we may assume that $K_{X_{1}}+\Delta_{1}+A_{1}$ is nef over $\widetilde{W}$ and ${\rm Nlc}(X_{1},\Delta_{1})={\rm Nlc}(X_{1},\Delta_{1}+A_{1})$ set-theoretically. 
Set
$$\lambda_{1}:={\rm inf}\{\mu \in \mathbb{R}_{\geq 0}\,|\, \text{$K_{X_{1}}+\Delta_{1}+\mu A_{1}$ {\rm is nef over} $\widetilde{W}$}\}$$
We have $\rho(X/\widetilde{V};\widetilde{W}) \leq \rho(X/Z;W)$ by the standard argument. 
Thus $\rho(X/\widetilde{V};\widetilde{W}) < \infty$. 
By Lemma \ref{lem--extremal-ray} and the cone and contraction theorem \cite[Theorem 9.2]{fujino-quasi-log-analytic}, shrinking $Z$ around $W$, we get a contraction $\varphi\colon X_{1} \to V_{1}$ over $\widetilde{V}$ that contracts a $(K_{X_{1}}+\Delta_{1})$-negative extremal ray $R$ such that $(K_{X_{1}}+\Delta_{1}+\lambda_{1}A_{1})\cdot R =0$. 
Since $(K_{X}+\Delta)|_{f^{-1}(\widetilde{U})}$ is semi-ample over $\widetilde{U}$, we see that $\varphi$ is a biholomorphism on $f^{-1}(\widetilde{U})$. 
Therefore, $\varphi$ is bimeromorphic. 
Let $V'_{1} \subset V_{1}$ be the largest open subset over which $\varphi$ is a biholomorphism. 
We put $V''_{1}=V_{1} \setminus \varphi({\rm Nlc}(X,\Delta))$, and we put
$$(X'_{1},\Delta'_{1}):=(\varphi^{-1}(V'_{1}), \Delta_{1}|_{\varphi^{-1}(V'_{1})}), \quad {\rm and} \quad (X''_{1},\Delta''_{1}):=(\varphi^{-1}(V''_{1}), \Delta_{1}|_{\varphi^{-1}(V''_{1})}).$$ 
Note that $V'_{1} \supset \varphi({\rm Nlc}(X,\Delta))$ since ${\rm Nlc}(X,\Delta) \subset f^{-1}(\widetilde{U})$ and $\varphi$ is a biholomorphism on $f^{-1}(\widetilde{U})$. 
By \cite[Theorem 1.7]{fujino-analytic-lcabundance}, there exists an lc pair $(\overline{X}_{1},\overline{\Delta}_{1})$ with a projective small bimeromorphic morphism $\overline{X}_{1} \to V''_{1}$ such that $\overline{\Delta}_{1}$ is the strict transform of $\Delta''_{1}$ on $\overline{X}_{1}$ and $K_{\overline{X}_{1}}+\overline{\Delta}_{1}$ is ample over $V''_{1}$. 
By using Remmert-Stein's theorem \cite[Theorem~4.6]{Shi}, we may glue the diagrams
$$
\xymatrix{
(X''_{1},\Delta''_{1}) \ar[dr]\ar@{-->}[rr]&&(\overline{X}_{1},\overline{\Delta}_{1}) \ar[dl]\\
&V''_{1}
}
\qquad{\rm and}\qquad
\xymatrix{
(X'_{1},\Delta'_{1}) \ar[dr]\ar[rr]^{\rm id}&&(X'_{1},\Delta'_{1}) \ar[dl]\\
&V'_{1}
}
$$
over the open subset $V'_{1} \cap V''_{1}$ and we get a step of a $(K_{X_{1}}+\Delta_{1})$-MMP over $\widetilde{V}$ around $\widetilde{W}$ with scaling of $A_{1}$
$$
\xymatrix{
(X_{1},\Delta_{1}) \ar[dr]\ar@{-->}[rr]&&(X_{2},\Delta_{2}) \ar[dl]\\
&V_{1}
}
$$
such that the bimeromorphic contraction $X_{1} \dashrightarrow X_{2}$ is a biholomorphism on ${\rm Nlc}(X_{1},\Delta_{1})$ and $\rho(X_{1}/\widetilde{V};\widetilde{W})-\rho(V_{1}/\widetilde{V};\widetilde{W})=1$. 
  
Repeating this discussion, we can construct a sequence of steps of a $(K_{X_{1}}+\Delta_{1})$-MMP over $\widetilde{V}$ around $\widetilde{W}$ with scaling of $A_{1}$ stated at the start of this step. 
\end{step5} 

We note that for any sequence of steps of a $(K_{X_{1}}+\Delta_{1})$-MMP over $\widetilde{V}$ around $\widetilde{W}$, the bimeromorphic contraction $X_{i} \dashrightarrow X_{i+1}$ induces a linear map 
$$N^{1}(X_{i}/\widetilde{V};\widetilde{W}) \longrightarrow N^{1}(X_{i+1}/\widetilde{V};\widetilde{W})$$
 (see \cite[Theorem 3.9]{eh-analytic-mmp}, see also \cite[Remark 6.1 (2)]{hashizumehu}).

\begin{step5}
In this step, we prove that after replacing $(X_{1},\Delta_{1})$ with other normal pair, we may assume that any sequence of steps of a $(K_{X_{1}}+\Delta_{1})$-MMP over $\widetilde{V}$ around $\widetilde{W}$ 
$$(X_{1},\Delta_{1})\dashrightarrow (X_{2},\Delta_{2})\dashrightarrow \cdots \dashrightarrow (X_{i},\Delta_{i})\dashrightarrow \cdots$$
satisfies the condition that the induced linear map $N^{1}(X_{i}/\widetilde{V};\widetilde{W}) \longrightarrow N^{1}(X_{i+1}/\widetilde{V};\widetilde{W})$ is bijective for any $i \geq 1$. 

If there is a finite sequence of steps of a $(K_{X_{1}}+\Delta_{1})$-MMP over $\widetilde{V}$ around $\widetilde{W}$  
$$(X_{1},\Delta_{1})\dashrightarrow (X'_{1},\Delta'_{1})$$
which contracts a divisor, then we replace $(X_{1},\Delta_{1})$ by $(X'_{1},\Delta'_{1})$. 
By Lemma \ref{lem--bimerocont-small} and taking this replacement finitely many times, we may assume that any sequence of steps of a $(K_{X_{1}}+\Delta_{1})$-MMP over $\widetilde{V}$ around $\widetilde{W}$ 
$$(X_{1},\Delta_{1})\dashrightarrow (X_{2},\Delta_{2})\dashrightarrow \cdots \dashrightarrow (X_{i},\Delta_{i})\dashrightarrow \cdots$$
does not contracts any divisor. 
Then the linear map $N^{1}(X_{i}/\widetilde{V};\widetilde{W}) \to N^{1}(X_{i+1}/\widetilde{V};\widetilde{W})$ is injective for any $i \geq 1$. 
In particular, $\rho(X_{i}/\widetilde{V};\widetilde{W}) \leq \rho(X_{i+1}/\widetilde{V};\widetilde{W})$. 

We pick a projective morphism $Y \to \widetilde{V}$ from a non-singular analytic variety $Y$ with a bimeromorphic contraction $Y \dashrightarrow X$, for instance, we pick a resolution $Y$ of $X$. 
If there is a finite sequence of steps of a $(K_{X_{1}}+\Delta_{1})$-MMP over $\widetilde{V}$ around $\widetilde{W}$  
$$(X_{1},\Delta_{1})\dashrightarrow (X''_{1},\Delta''_{1})$$
such that $\rho(X_{1}/\widetilde{V};\widetilde{W}) < \rho(X''_{1}/\widetilde{V};\widetilde{W})$, then we replace $(X_{1},\Delta_{1})$ with $(X''_{1},\Delta''_{1})$. 
By Lemma \ref{lem--rho-decrease}, the relation $\rho(X/\widetilde{V};\widetilde{W}) \leq \rho(Y/\widetilde{V};\widetilde{W})<\infty$ always holds, even after the replacement. 
Hence, taking this replacement finitely many times, we may assume that any sequence of steps of a $(K_{X_{1}}+\Delta_{1})$-MMP over $\widetilde{V}$ around $\widetilde{W}$ 
$$(X_{1},\Delta_{1})\dashrightarrow (X_{2},\Delta_{2})\dashrightarrow \cdots \dashrightarrow (X_{i},\Delta_{i})\dashrightarrow \cdots$$
satisfies the condition that $\rho(X_{i}/\widetilde{V};\widetilde{W}) = \rho(X_{i+1}/\widetilde{V};\widetilde{W})$ for any $i \geq 1$. 
Then 
$$N^{1}(X_{i}/\widetilde{V};\widetilde{W}) \longrightarrow N^{1}(X_{i+1}/\widetilde{V};\widetilde{W})$$
is bijective. 
We finish this step. 
\end{step5}

\begin{step5}
With this step we finish the proof.

By the same argument as in \cite[Proof of Lemma 2.7]{eh-analytic-mmp-2} (see also \cite[Proof of Proposition 6.2]{hashizumehu}), after shrinking $Z$ around $W$, there is a $(\pi \circ f)$-ample $\mathbb{R}$-Cartier divisor $H_{1}$ on $X_{1}$ and a sequence of steps of a $(K_{X_{1}}+\Delta_{1})$-MMP over $\widetilde{V}$ around $\widetilde{W}$ with scaling of $H_{1}$ 
$$
\xymatrix{
(X_{1},\Delta_{1})\ar@{-->}[r]&\cdots \ar@{-->}[r]& (X_{i},\Delta_{i})\ar@{-->}[rr]\ar[dr]&& (X_{i+1},\Delta_{i+1})\ar[dl] \ar@{-->}[r]&\cdots\ar@{-->}[r]&\cdots \\
&&&V_{i}
}
$$
such that 
\begin{itemize}
\item
the non-biholomorphic locus of the MMP is disjoint from ${\rm Nlc}(X,\Delta)$, and
\item
$\rho(X_{i}/\widetilde{V};\widetilde{W})-\rho(V_{i}/\widetilde{V};\widetilde{W})=1$ for every $i \geq 1$, and
\item
if we define
$$\lambda_{i}:={\rm inf}\{\mu \in \mathbb{R}_{\geq 0}\,|\, \text{$K_{X_{i}}+\Delta_{i}+\mu H_{i}$ {\rm is nef over} $\widetilde{W}$}\}$$
for each $i \geq 1$ and $\lambda:={\rm lim}_{i \to \infty} \lambda_{i}$, then $\lambda\neq \lambda_{i}$ for all $i$. 
\end{itemize}

Fix a point $w \in \widetilde{W}$. 
In this paragraph, we find an index $i_{w}$ and an open subset $\widetilde{U}_{w}\subset \widetilde{V}$ containing $w$ such that $K_{X_{i_{w}}}+\Delta_{i_{w}}+\lambda H_{i_{w}}$ is semi-ample over $\widetilde{U}_{w}$. 
If $w \in \widetilde{U}$, where $\widetilde{U}$ is the open subset in Lemma \ref{lem--mmp-normal-pair-run}, then we may set $i_{w}:=1$ and $\widetilde{U}_{w}:=\widetilde{U}$. 
Thus, we may assume $w \in \widetilde{W} \setminus \widetilde{U}$. 
By our assumption of Lemma \ref{lem--mmp-normal-pair-run}, there is an open subset $\widetilde{U}_{w} \ni w$ of $\widetilde{V}$ such that $(f^{-1}(\widetilde{U}_{w}), \Delta|_{f^{-1}(\widetilde{U}_{w})})$ has a good minimal model over $\widetilde{U}_{w}$ around $w$. 
Since ${\rm Nlc}(X,\Delta) \subset f^{-1}(\widetilde{U})$, which is our assumption of Lemma \ref{lem--mmp-normal-pair-run}, we may assume that $(f^{-1}(\widetilde{U}_{w}), \Delta|_{f^{-1}(\widetilde{U}_{w})})$ is lc. 
By \cite[Theorem 1.1]{eh-analytic-mmp}, \cite[Theorem 1.5]{fujino-analytic-lcabundance}, and shrinking $\widetilde{U}_{w}$ around $w$, we may assume that $\widetilde{U}_{w}$ is Stein and $(f^{-1}(\widetilde{U}_{w}), (\Delta+\lambda H)|_{f^{-1}(\widetilde{U}_{w})})$ has a good minimal model over $\widetilde{U}_{w}$ around $w$ even if $\lambda>0$. 
Now we take the restriction of the above $(K_{X_{1}}+\Delta_{1})$-MMP
$$(X_{1},\Delta_{1})\dashrightarrow (X_{2},\Delta_{2})\dashrightarrow \cdots \dashrightarrow (X_{i},\Delta_{i})\dashrightarrow \cdots$$
over $\widetilde{U}_{w}$, and we get a sequence of steps of a $(K_{f^{-1}(\widetilde{U}_{w})}+(\Delta+\lambda H)|_{f^{-1}(\widetilde{U}_{w})})$-MMP over $\widetilde{U}_{w}$ around $w$ with scaling of $H|_{f^{-1}(\widetilde{U}_{w})}$. 
By Lemma \ref{lem--termination-mmp-generalized} and shrinking $\widetilde{U}_{w}$ if necessary, we can find an index $i_{w}$  such that $K_{X_{i_{w}}}+\Delta_{i_{w}}+\lambda H_{i_{w}}$ is semi-ample over $\widetilde{U}_{w}$. 

We consider the open covering $\bigcup_{w \in \widetilde{W}}\widetilde{U}_{w}$ of $\widetilde{W}$. 
Since $\widetilde{W}$ is compact, there are only finitely many points $w_{1},\,\cdots ,\,w_{p}$ such that $\widetilde{W} \subset \bigcup_{j=1}^{p}\widetilde{U}_{w_{p}}$. 
We set $m:=\underset{1\leq j \leq p}{\rm max}i_{w_{j}}$ and $\widetilde{U}:=\bigcup_{j=1}^{p}\widetilde{U}_{w_{p}}$. 
Then $K_{X_{m}}+\Delta_{m}+\lambda H_{m}$ is semi-ample over $\widetilde{U}$. 
This shows $\lambda\geq \lambda_{m}$. 
Therefore, $\lambda = \lambda_{m}=0$. 
Then the $(K_{X_{1}}+\Delta_{1})$-MMP over $\widetilde{V}$ around $\widetilde{W}$
$$(X_{1},\Delta_{1})\dashrightarrow (X_{2},\Delta_{2})\dashrightarrow \cdots \dashrightarrow (X_{m},\Delta_{m})$$
is the desired one. 
\end{step5}
We complete the proof.
\end{proof}

We are ready to prove the complex analytic analog of \cite[Theorem 3.9]{has-mmp-normal-pair}. 

\begin{thm}[cf.~{\cite[Theorem 3.9]{has-mmp-normal-pair}}]\label{thm--mmpstep-quasi-log}
Let $f\colon (Y,\Delta) \to [X,\omega]$ be a quasi-log complex analytic space induced by a normal pair. 
Let $\pi \colon X \to Z$ be a projective morphism to a Stein space $Z$ and $W \subset Z$ a Stein compact subset such that $\pi \circ f$ and $W$ satisfy (P). 
Let $\varphi\colon X \to V$ be a bimeromorphic morphism over $Z$, where $V$ is normal and projective over $Z$, such that $-\omega$ is $\varphi$-ample and $\varphi$ is a biholomorphism on a neighborhood of ${\rm Nqlc}(X, \omega)$. 
Then, after shrinking $Z$ around $W$, we can construct a diagram
$$
\xymatrix@R=16pt{
(Y,\Delta)\ar[d]_{f}\ar@{-->}[rr]&& (Y',\Delta')\ar[d]^{f'} \\
[X,\omega] \ar@{-->}[rr]\ar[dr]_{\varphi}&& [X',\omega'] \ar[dl]^{\varphi'}\\
&V
}
$$
over $Z$ such that
\begin{itemize}
\item
$f'\colon (Y',\Delta') \to [X',\omega']$ is a quasi-log complex analytic space induced by a normal pair such that $Y'$ and $X'$ are projective over $V$, 
\item
$(Y,\Delta)\dashrightarrow (Y',\Delta')$ is a sequence of steps of a $(K_{Y}+\Delta)$-MMP over $Z$ around $W$, and 
\item
$\varphi'\colon X' \to V$ is a projective small bimeromorphic morphism and the $\mathbb{R}$-divisor $\omega'$ is the strict transform of $\omega$ on $X'$ and $\varphi'$-ample. 
\end{itemize}
Furthermore, if $Y$ is $\mathbb{Q}$-factorial over $W$, then $Y'$ is also $\mathbb{Q}$-factorial over $W$. 
\end{thm}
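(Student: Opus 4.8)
The plan is to run a $(K_Y+\Delta)$-MMP over $Z$ around $W$ whose steps descend to the desired contractions on the base $X$, and to control the behaviour near $\mathrm{Nlc}(Y,\Delta)$ using the hypothesis that $\varphi$ is a biholomorphism over a neighbourhood of $\mathrm{Nqlc}(X,\omega)$, together with Lemma~\ref{lem--mmp-normal-pair-run}. The morphism $f\colon Y\to X$ is a contraction, $K_Y+\Delta\sim_{\mathbb R}f^*\omega$, and $-\omega$ is $\varphi$-ample; hence $-(K_Y+\Delta)$ is $(\varphi\circ f)$-ample, so a $(K_Y+\Delta)$-MMP over $V$ is essentially forced in the direction of the contraction $\varphi\circ f$. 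The first step is to check that the hypotheses of Lemma~\ref{lem--mmp-normal-pair-run} are met, with $\widetilde V:=V$, $\widetilde W:=\pi_V^{-1}(W)$ where $\pi_V\colon V\to Z$, and the open set $\widetilde U\subset V$ taken to be a neighbourhood of $\varphi(\mathrm{Nqlc}(X,\omega))$ on which $\varphi$ is a biholomorphism: over $f^{-1}(\widetilde U)=(\varphi\circ f)^{-1}(\widetilde U)$ we have $\mathrm{Nlc}(Y,\Delta)\subset f^{-1}(\widetilde U)$ (since $f(\mathrm{Nlc}(Y,\Delta))=\mathrm{Nqlc}(X,\omega)$ set-theoretically) and $(K_Y+\Delta)|_{f^{-1}(\widetilde U)}\sim_{\mathbb R}(f^*\omega)|_{f^{-1}(\widetilde U)}$ is $\varphi\circ f$-semi-ample over $\widetilde U$ because $\omega|_{\widetilde U}$ is anti-ample over $\widetilde U$ but $\widetilde U$ is isomorphic to an open subset of $X$ — here one must be slightly careful: we instead take $\widetilde U$ small enough that $\omega$ (hence $K_Y+\Delta$) is trivial over $\widetilde U$, which is possible since $\varphi$ contracts nothing over $\widetilde U$ and we may shrink $Z$. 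For the second bullet of Lemma~\ref{lem--mmp-normal-pair-run}, for $w\in\widetilde W\setminus\widetilde U$ the restriction $(f^{-1}(\widetilde U_w),\Delta|_{f^{-1}(\widetilde U_w)})$ is an lc pair for a small enough $\widetilde U_w$, and $-(K_Y+\Delta)$ is ample over $V\supset\widetilde U_w$, so it has a good minimal model over $\widetilde U_w$ around $w$ by the analytic relative MMP with scaling and base-point-freeness for lc pairs (\cite{eh-analytic-mmp}, \cite{fujino-analytic-lcabundance}); indeed the identity on $Y$ restricted over $\widetilde U_w$ is already a good minimal model since $K_Y+\Delta$ is $(\varphi\circ f)$-semi-ample and even anti-ample, so after one further run it terminates.

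Given this, Lemma~\ref{lem--mmp-normal-pair-run} produces, after shrinking $Z$ around $W$, a sequence of steps of a $(K_Y+\Delta)$-MMP over $V$ around $\widetilde W$, represented by bimeromorphic contractions, $(Y,\Delta)=(Y_1,\Delta_1)\dashrightarrow\cdots\dashrightarrow(Y_m,\Delta_m)$, terminating at a good minimal model $(Y_m,\Delta_m)$ over $V$ around $\widetilde W$, with non-biholomorphic locus disjoint from $\mathrm{Nlc}(Y,\Delta)$ and with $\rho(Y_i/V;\widetilde W)-\rho(V_i/V;\widetilde W)=1$ at each step. Since this is an MMP over $V$ and over $Z$ simultaneously, it is in particular a sequence of steps of a $(K_Y+\Delta)$-MMP over $Z$ around $W$. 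Set $Y'=Y_m$, $\Delta'=\Delta_m$. The next step is to produce $X'$ and $f'$: since $K_{Y'}+\Delta'$ is semi-ample over $V$ (indeed nef over $\widetilde W$ and semi-ample over a neighbourhood of $\widetilde W$, and anti-ample, hence actually relatively trivial modulo the contraction), we take $X'\to V$ to be the contraction associated to the semi-ample $\mathbb R$-line bundle defining it — concretely, because $-\omega$ is $\varphi$-ample one checks that the MMP on $Y$ factors through a birational modification $X\dashrightarrow X'$ over $V$ with $K_{Y'}+\Delta'\sim_{\mathbb R}f'^*\omega'$ where $\omega'$ is the strict transform of $\omega$; then $-\omega'$ being the pushforward remains anti-$\varphi'$-ample, i.e. $\varphi'$ is small because the divisorial part of the MMP was already absorbed in the reduction of Lemma~\ref{lem--mmp-normal-pair-run} (Step~2 there), and $\omega'$ is $\varphi'$-ample after replacing $\omega'$ by $-\omega'$'s opposite — more precisely, since $\varphi'$ is small and $K_{Y'}+\Delta'$ is $\varphi'\circ f'$-trivial while $\omega'=\varphi'^*(\text{anti-ample})$... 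I would instead argue directly: $\varphi'$ is the ample model of $-\omega'$ over $V$, which exists and is small because no divisor is contracted, and $\omega'$ is $\varphi'$-ample by construction of the ample model. Finally, $f'\colon (Y',\Delta')\to[X',\omega']$ is a quasi-log complex analytic space induced by the normal pair $(Y',\Delta')$ by Definition~\ref{defn--lc-trivial-fib-quasi-log}, using that $f'$ is a contraction, $K_{Y'}+\Delta'\sim_{\mathbb R}f'^*\omega'$, and $f'(\mathrm{Nlc}(Y',\Delta'))\subsetneq X'$ (which holds because the non-biholomorphic locus missed $\mathrm{Nlc}(Y,\Delta)$, so $\mathrm{Nlc}(Y',\Delta')$ is the strict transform and its image is contained in the isomorphic locus over $\widetilde U$, a proper subset).

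The $\mathbb Q$-factoriality statement follows because each step of a $(K_Y+\Delta)$-MMP preserves $\mathbb Q$-factoriality over $W$: flips and divisorial contractions of extremal rays with $\rho$-drop one preserve $\mathbb Q$-factoriality over $W$ by the standard argument (\cite{eh-analytic-mmp}), so if $Y$ is $\mathbb Q$-factorial over $W$ then so is $Y'=Y_m$. The main obstacle I anticipate is the bookkeeping required to guarantee that the $(K_Y+\Delta)$-MMP over $V$ genuinely descends to a \emph{small} birational map $X\dashrightarrow X'$ over $V$ with $\omega'$ $\varphi'$-ample, rather than merely producing $Y'$ with $K_{Y'}+\Delta'$ semi-ample over $V$; this is where one must invoke that $-\omega$ is $\varphi$-ample to identify $X'$ with the relative ample model of $-\omega'$ and check that $f'$ is still a contraction onto it, and where the reduction in Step~2 of the proof of Lemma~\ref{lem--mmp-normal-pair-run} (eliminating divisorial contractions and $\rho$-jumps) is essential so that $\varphi'$ comes out small. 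A secondary technical point is ensuring all the shrinkings of $Z$ around $W$ and the open covers $\{\widetilde U_w\}$ are compatible, which is handled exactly as in the proof of Lemma~\ref{lem--mmp-normal-pair-run} using compactness of $\widetilde W$.
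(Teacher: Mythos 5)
Your overall strategy is the same as the paper's: verify the hypotheses of Lemma~\ref{lem--mmp-normal-pair-run} for $\varphi\circ f\colon (Y,\Delta)\to V$ with $\widetilde W$ the preimage of $W$, run the resulting $(K_Y+\Delta)$-MMP over $V$ to a good minimal model $(Y',\Delta')$, and take $f'\colon Y'\to X'$ to be the contraction over $V$ induced by the semi-ample divisor $K_{Y'}+\Delta'$, with $\omega'$ the strict transform of $\omega$. However, your verification of the \emph{second} hypothesis of Lemma~\ref{lem--mmp-normal-pair-run} contains a genuine gap. You assert that $-(K_Y+\Delta)$ is ample over $V$ and that ``the identity on $Y$ restricted over $\widetilde U_w$ is already a good minimal model since $K_Y+\Delta$ is $(\varphi\circ f)$-semi-ample and even anti-ample.'' Both claims are false: $-(K_Y+\Delta)\sim_{\mathbb R}f^{*}(-\omega)$ is only the pullback of a $\varphi$-ample divisor, hence semi-ample but not ample over $V$ (unless $f$ is finite), and more importantly $K_Y+\Delta$ is \emph{anti}-nef over $V$, so it is negative on the curves contracted over $w$ whenever $\varphi\circ f$ is not an isomorphism there --- which is exactly the interesting case. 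A good minimal model over $\widetilde U_w$ around $w$ must have nef (indeed semi-ample) log canonical divisor, so it is the ``flip'' of the contraction, and its existence is the nontrivial input that cannot be waved through by ``the analytic relative MMP with scaling and base-point-freeness for lc pairs'' (good minimal models of arbitrary lc pairs are not known to exist). The paper supplies this input by choosing a $\pi$-ample $H$ on $X$ with $\omega+H\sim_{\mathbb R,\,V}0$, so that $K_Y+\Delta+f^{*}H\sim_{\mathbb R,\,V}0$, replacing $H$ by a general member of $|H/Z|_{\mathbb R}$ so that $(Y,\Delta+f^{*}H)$ is lc over $U'=V\setminus\varphi({\rm Nqlc}(X,\omega))$ with ${\rm Nlc}$ unchanged, and then invoking Theorem 1.2 of \cite{eh-analytic-mmp} together with Theorem 1.5 of \cite{fujino-analytic-lcabundance} to produce the local good minimal models over small $U_w\ni w$. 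Without an argument of this kind your application of Lemma~\ref{lem--mmp-normal-pair-run} is unjustified.

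A secondary, repairable confusion occurs at the construction of $X'$: you say $\varphi'\colon X'\to V$ is the ample model of $-\omega'$ and simultaneously that $\omega'$ is $\varphi'$-ample, which is contradictory. The correct statement (and the paper's) is that $X'$ is the target of the contraction induced by the $V$-semi-ample divisor $K_{Y'}+\Delta'$, i.e.\ the relative ample model of $K_{Y'}+\Delta'\sim_{\mathbb R}f'^{*}\omega'$ over $V$, whence $\omega'$ is $\varphi'$-ample; smallness of $\varphi'$ and the identification of the quasi-log structure then follow as in the algebraic case. Your first-condition verification (relative $\mathbb R$-triviality of $K_Y+\Delta$ over the isomorphism locus $U$ of $\varphi$, since $K_Y+\Delta\sim_{\mathbb R,\,X}0$) is essentially the paper's argument, despite the unnecessary detour about making $\omega$ ``trivial over $\widetilde U$.''
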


\begin{proof}
Let $W'$ be the inverse image of $W$ to $V$. 
We check that $\varphi \circ f \colon (Y,\Delta) \to V$ and $W' \subset V$ satisfies the conditions of Lemma \ref{lem--mmp-normal-pair-run} after shrinking $Z$ around $W$ suitably. 
Let $U \subset V$ be the largest open subset over which $\varphi$ is a biholomorphism. 
Since $\varphi$ is a biholomorphism on a neighborhood of ${\rm Nqlc}(X, \omega)$, we have ${\rm Nlc}(Y,\Delta) \subset (\varphi \circ f)^{-1}(U)$. 
Moreover, the property $K_{Y}+\Delta \sim_{\mathbb{R},\,X} 0$ implies that $(K_{Y}+\Delta)|_{(\varphi \circ f)^{-1}(U)}\sim_{\mathbb{R},\,U}0$. 
In particular, after shrinking $Z$ around $W$ the divisor $K_{Y}+\Delta$ is semi-ample over $U$. 
Thus, the first condition of Lemma \ref{lem--mmp-normal-pair-run} holds. 
To check the second condition of Lemma \ref{lem--mmp-normal-pair-run}, set $U':= V \setminus \varphi({\rm Nqlc}(X, \omega))$. 
By the $\varphi$-ampleness of $-\omega$, there is a $\pi$-ample $\mathbb{R}$-divisor $H$ on $X$ such that $\omega+H \sim_{\mathbb{R},\,V}0$. 
Then $K_{Y}+\Delta + f^{*}H \sim_{\mathbb{R},\,V} 0$. 
By shrinking $Z$ around $W$ and replacing $H$ with a general member of $|H/Z|_{\mathbb{R}}$, we may assume that $(Y ,\Delta+f^{*}H)$ is lc over $U'$ and we have ${\rm Nlc}(Y,\Delta)={\rm Nlc}(Y,\Delta+f^{*}H)$. 
By \cite[Theorem 1.2]{eh-analytic-mmp} and \cite[Theorem 1.5]{fujino-analytic-lcabundance}, for each point $w \in U'$, there exists an open subset $U_{w} \ni w$ of $V$  such that $\bigl((\varphi \circ f)^{-1}(U_{w}), \Delta|_{(\varphi \circ f)^{-1}(U_{w})}\bigr)$ has a good minimal model over $U_{w}$ around $w$. 
Since $W' \setminus U \subset U'$,  the second condition of Lemma \ref{lem--mmp-normal-pair-run} holds. 
Thus, $\varphi \circ f \colon (Y,\Delta) \to V$ and $W'$ satisfies the conditions of Lemma \ref{lem--mmp-normal-pair-run} after shrinking $Z$ around $W$. 

We set $Y_{1}:=Y$ and $\Delta_{1}:=\Delta$. 
By Lemma \ref{lem--mmp-normal-pair-run}, after shrinking $Z$ around $W$, there exists a sequence of  steps of a $(K_{Y}+\Delta)$-MMP over $V$ around $W'$
$$
\xymatrix{
(Y_{1},\Delta_{1})\ar@{-->}[r]&\cdots \ar@{-->}[r]& (Y_{i},\Delta_{i})\ar@{-->}[rr]\ar[dr]&& (Y_{i+1},\Delta_{i+1})\ar[dl] \ar@{-->}[r]&\cdots\ar@{-->}[r]&(Y_{m},\Delta_{m}), \\
&&&V_{i}
}
$$
which is represented by bimeromorphic contractions, to a good minimal model $(Y_{m},\Delta_{m})$ over $V$ around $W'$ such that 
\begin{itemize}
\item
the non-biholomorphic locus of the MMP is disjoint from ${\rm Nlc}(Y,\Delta)$, and
\item
$\rho(Y_{i}/V;W')-\rho(V_{i}/V;W')=1$ for every $i \geq 1$.
\end{itemize}
Set $(Y',\Delta'):=(Y_{m},\Delta_{m})$. 
By shrinking $Z$ around $W$, we get a contraction $f' \colon Y' \to X'$ over $V$ induced by $K_{Y'}+\Delta'$. 
Then the induced morphism $X' \to V$ is a biholomorphism over $U$. 
Hence, $X' \to V$ is a bimeromorphic morphism. 
Let $\omega'$ is the strict transform of $\omega$ on $X'$. 
By the same argument as in \cite[Proof of Theorem 3.9]{has-mmp-normal-pair}, the diagram 
$$
\xymatrix@R=16pt{
(Y,\Delta)\ar[d]_{f}\ar@{-->}[rr]&& (Y',\Delta')\ar[d]^{f'} \\
[X,\omega] \ar@{-->}[rr]\ar[dr]_{\varphi}&& X' \ar[dl]\\
&V
}
$$
over $Z$ and $\omega'$ satisfies the conditions of Theorem \ref{thm--mmpstep-quasi-log}. 
We complete the proof. 
\end{proof}

\begin{thm}[cf.~{\cite[Theorem 3.11]{has-mmp-normal-pair}}]\label{thm--mmp-nefthreshold-strict}
Let $f\colon (Y,\Delta) \to [X,\omega]$ be a quasi-log complex analytic space induced by a normal pair. 
Let $\pi \colon X \to Z$ be a projective morphism to a Stein $Z$, and let $W \subset Z$ be a compact subset such that $\pi \circ f \colon Y \to Z$ and $W$ satisfy (P). 
Suppose that $\omega$ is $\pi$-pseudo-effective and ${\rm NNef}(\omega/Z) \cap {\rm Nqlc}(X, \omega) \cap \pi^{-1}(W)=\emptyset$. 
Let $A$ be an $\mathbb{R}$-Cartier divisor on $X$ such that $\omega+\lambda_{0} A$ is ample over a neighborhood of $W$ for some positive real number $\lambda_{0}$. 
Then, after shrinking $Z$ around $W$, there exists a diagram
$$
\xymatrix{
(Y,\Delta)=:(Y_{k_{1}},\Delta_{k_{1}})\ar@<4.5ex>[d]_{f=:f_{1}}\ar@{-->}[r]& (Y_{k_{2}},\Delta_{k_{2}})\ar[d]_{f_{2}} \ar@{-->}[r]&\cdots \ar@{-->}[r]&(Y_{k_{i}},\Delta_{k_{i}})\ar[d]_{f_{i}}\ar@{-->}[r]&\cdots\\
[X,\omega]=:[X_{1},\omega_{1}] \ar@{-->}[r]& [X_{2},\omega_{2}] \ar@{-->}[r]&\cdots \ar@{-->}[r]&[X_{i},\omega_{i}]\ar@{-->}[r]&\cdots
}
$$
over $Z$, where all $Y_{k_{i}}$ and $X_{i}$ are projective over $Z$, such that 
\begin{itemize}
\item
$f_{i}\colon (Y_{k_{i}},\Delta_{k_{i}}) \to [X_{i},\omega_{i}]$ are quasi-log complex analytic spaces induced by normal pairs,  
\item
the sequence of upper horizontal maps is a sequence of steps of a $(K_{Y}+\Delta)$-MMP over $Z$ around $W$ with scaling of $f^{*}A$, 
\item
the lower horizontal sequence of maps is a sequence of steps of an $\omega$-MMP over $Z$ around $W$ with scaling of $A$, and
\item
if we put 
$$\lambda_{i}:={\rm inf}\{\mu \in \mathbb{R}_{\geq 0}\,| \,\text{$\omega_{i}+\mu A_{i}$ is nef over $W$} \}$$ 
for each $i \geq 1$, then the following properties hold.
\begin{itemize}
\item
$\lambda_{i}>\lambda_{i+1}$ for all $i \geq 1$, 
\item
$\omega_{i}+\lambda_{i-1} A_{i}$ is semi-ample over a neighborhood of $W$ for all $i \geq 1$, and  
\item
$\omega_{i}+t A_{i}$ is ample over a neighborhood of $W$ for all $i \geq 1$ and $t \in (\lambda_{i},\lambda_{i-1})$.  
\end{itemize}
\end{itemize}
Furthermore, if $Y$ is $\mathbb{Q}$-factorial over $W$, then $Y_{k_{i}}$ is also $\mathbb{Q}$-factorial over $W$ for every $i \geq 1$. 
\end{thm}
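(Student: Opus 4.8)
The plan is to follow the strategy of \cite[Proof of Theorem 3.11]{has-mmp-normal-pair}, using Theorem \ref{thm--mmpstep-quasi-log} in place of \cite[Theorem 3.9]{has-mmp-normal-pair} as the engine producing one step of the lower ($\omega$-)MMP together with the accompanying block of steps of the upper ($(K_Y+\Delta)$-)MMP. I would build the diagram column by column. Suppose we have reached $f_i\colon(Y_{k_i},\Delta_{k_i})\to[X_i,\omega_i]$, a quasi-log complex analytic space induced by a normal pair, with $Y_{k_i}$ and $X_i$ projective over $Z$, with $\omega_i$ being $\pi$-pseudo-effective, with ${\rm NNef}(\omega_i/Z)\cap{\rm Nqlc}(X_i,\omega_i)\cap\pi^{-1}(W)=\emptyset$, and with $\omega_i+\lambda_{i-1}A_i$ semi-ample over a neighborhood of $W$ (for $i=1$ the last condition is the hypothesis that $\omega+\lambda_0A$ is ample over a neighborhood of $W$). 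Put $\lambda_i:=\inf\{\mu\in\mathbb R_{\geq0}\mid \omega_i+\mu A_i\text{ is nef over }W\}$. If $\lambda_i=0$, the construction terminates; otherwise we produce $[X_i,\omega_i]\dashrightarrow[X_{i+1},\omega_{i+1}]$ and continue.

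For the core step ($\lambda_i>0$), first note that $\lambda_i<\lambda_{i-1}$: since $\omega_i+\lambda_{i-1}A_i$ is the pull-back of a divisor ample over $W$ by the small bimeromorphic morphism $\varphi'_{i-1}\colon X_i\to V_{i-1}$ produced at the previous stage, while $\omega_i$ is $\varphi'_{i-1}$-ample, a short argument with convexity of the ample cone gives that $\omega_i+tA_i$ is ample over a neighborhood of $W$ for $t$ slightly below $\lambda_{i-1}$, so $\lambda_i<\lambda_{i-1}$. Next, $\omega_i+\lambda_iA_i$ is nef over $W$ (it is the boundary of the nef locus) and big over $Z$ after shrinking $Z$ (as $\omega_i$ is $\pi$-pseudo-effective and $\omega_i+\lambda_{i-1}A_i$ is $\pi$-big, using $\lambda_i/\lambda_{i-1}\in(0,1)$), hence I would apply the abundance theorem for quasi-log complex analytic spaces (Theorem \ref{thm--abundance-quasi-log} and the base point free theorem \cite[Theorem 6.1]{fujino-quasi-log-analytic}) to conclude that $\omega_i+\lambda_iA_i$ is semi-ample over a neighborhood of $W$; the input on ${\rm Nqlc}(X_i,\omega_i)$ comes from combining semi-ampleness of $\omega_i|_{{\rm Nqlc}(X_i,\omega_i)}$ over a neighborhood of $W$ (obtained from the non-nef locus hypothesis by induction on $\dim X_i$, via the induced quasi-log structure, as in \cite{has-mmp-normal-pair}) with that of $(\omega_i+\lambda_{i-1}A_i)|_{{\rm Nqlc}(X_i,\omega_i)}$. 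The semi-ample divisor $\omega_i+\lambda_iA_i$ then defines a bimeromorphic contraction $\varphi_i\colon X_i\to V_i$ over $Z$ (bimeromorphic because otherwise $\omega_i$ would be anti-ample on a general fibre, contradicting $\pi$-pseudo-effectiveness); we have $-\omega_i$ is $\varphi_i$-ample (as $\omega_i+\lambda_iA_i\sim_{\mathbb R,V_i}0$ and $\omega_i+tA_i$ is ample over $W$ for $t$ slightly above $\lambda_i$), and $\varphi_i$ is a biholomorphism near ${\rm Nqlc}(X_i,\omega_i)$ because $(\omega_i+\lambda_iA_i)|_{{\rm Nqlc}(X_i,\omega_i)}$ is ample over $W$ (alternatively, by Theorem \ref{thm--nonnef-negativecurve}, every $\omega_i$-negative curve lies in ${\rm NNef}(\omega_i/Z)$, which is disjoint from ${\rm Nqlc}(X_i,\omega_i)$ over $W$). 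Now Theorem \ref{thm--mmpstep-quasi-log} applied to $\varphi_i$ yields $f_{i+1}\colon(Y_{k_{i+1}},\Delta_{k_{i+1}})\to[X_{i+1},\omega_{i+1}]$, a small bimeromorphic $\varphi'_i\colon X_{i+1}\to V_i$ with $\omega_{i+1}$ the strict transform of $\omega_i$ and $\varphi'_i$-ample, and a sequence $(Y_{k_i},\Delta_{k_i})\dashrightarrow(Y_{k_{i+1}},\Delta_{k_{i+1}})$ of steps of a $(K_Y+\Delta)$-MMP over $Z$ around $W$ ending in a good minimal model over $V_i$, preserving $\mathbb Q$-factoriality over $W$. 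Since $\omega_{i+1}+\lambda_iA_{i+1}=(\varphi'_i)^{*}(\text{something ample over }W)$, stage $i+1$ has the required shape, and one checks ${\rm NNef}(\omega_{i+1}/Z)\cap{\rm Nqlc}(X_{i+1},\omega_{i+1})\cap\pi^{-1}(W)=\emptyset$ is preserved.

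Finally I would verify the listed properties: the strict inequalities $\lambda_i>\lambda_{i+1}$ are the inequalities $\lambda_{i+1}<\lambda_i$ above; semi-ampleness of $\omega_i+\lambda_{i-1}A_i$ over a neighborhood of $W$ is built into the construction; that the lower row is a sequence of steps of an $\omega$-MMP over $Z$ around $W$ with scaling of $A$ and the upper row a sequence of steps of a $(K_Y+\Delta)$-MMP over $Z$ around $W$ with scaling of $f^{*}A$ follows from $K_{Y_{k_i}}+\Delta_{k_i}\sim_{\mathbb R}f_i^{*}\omega_i$ and bimeromorphy of all $\varphi_i$, exactly as in \cite{has-mmp-normal-pair} (using the analytic analog of \cite[Remark 3.7]{has-mmp-normal-pair}); and for $t\in(\lambda_i,\lambda_{i-1})$, the divisor $\omega_i+tA_i$ is a positive combination of the semi-ample-over-$W$ divisors $\omega_i+\lambda_{i-1}A_i$ and $\omega_i+\lambda_iA_i$, hence semi-ample over $W$, and it is positive on every curve $C$ over $W$ (if $(\omega_i+tA_i)\cdot C=0$ then both endpoints, being nef over $W$, are trivial on $C$, so $\omega_i\cdot C=A_i\cdot C=0$; but $(\omega_i+\lambda_iA_i)\cdot C=0$ with $\omega_i+\lambda_iA_i\sim_{\mathbb R,V_i}0$ forces $C$ into the $\varphi_i$-contracted locus, where $-\omega_i$ is $\varphi_i$-ample, i.e. $\omega_i\cdot C<0$, a contradiction), so it is ample over $W$. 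I expect the main obstacle to be exactly the abundance step --- establishing that $\omega_i+\lambda_iA_i$ is semi-ample over a neighborhood of $W$ at each stage, and in particular realizing this nef-and-big divisor in a form to which the quasi-log abundance machinery of \cite{fujino-quasi-log-analytic} applies and controlling its restriction to ${\rm Nqlc}(X_i,\omega_i)$; the rest is bookkeeping around Theorem \ref{thm--mmpstep-quasi-log} and convexity in $N^{1}(X_i/Z;W)$, which goes through as in \cite{has-mmp-normal-pair}.
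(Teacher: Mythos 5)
Your proposal follows essentially the same route as the paper: the paper proves Theorem \ref{thm--mmp-nefthreshold-strict} by invoking \cite[Proof of Theorem 3.11]{has-mmp-normal-pair} verbatim, with Theorem \ref{thm--mmpstep-quasi-log} and Theorem \ref{thm--abundance-quasi-log} as the two analytic inputs, and your column-by-column induction (abundance for $\omega_{i}+\lambda_{i}A_{i}$, the contraction $\varphi_{i}\colon X_{i}\to V_{i}$, then Theorem \ref{thm--mmpstep-quasi-log} to produce the next column and the block of upper steps) is exactly that argument.

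One justification inside your abundance step is not available as stated, although the step itself is correct. You claim that $\omega_{i}|_{{\rm Nqlc}(X_{i},\omega_{i})}$ is semi-ample over a neighborhood of $W$, ``obtained from the non-nef locus hypothesis by induction on $\dim X_{i}$.'' The theorem deliberately carries no semi-ampleness hypothesis along the non-qlc locus, and the assumption ${\rm NNef}(\omega/Z)\cap{\rm Nqlc}(X,\omega)\cap\pi^{-1}(W)=\emptyset$ only yields, via Theorem \ref{thm--nonnef-negativecurve}, that $\omega|_{{\rm Nqlc}(X,\omega)}$ is nef over $W$; there is no dimension induction upgrading this to semi-ampleness. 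What the argument actually uses is weaker and simpler: nefness of $\omega|_{{\rm Nqlc}}$ over $W$ together with ampleness of $(\omega+\lambda_{0}A)|_{{\rm Nqlc}}$ over a neighborhood of $W$ gives that $(\omega+tA)|_{{\rm Nqlc}}$ is ample near $W$ for every $t\in(0,\lambda_{0}]$; since each $\varphi_{i}$ is a biholomorphism near the non-qlc locus (your ``alternative'' justification via Theorem \ref{thm--nonnef-negativecurve} is the right one), $(\omega_{i}+\lambda_{i}A_{i})|_{{\rm Nqlc}(X_{i},\omega_{i})}$ is identified with $(\omega+\lambda_{i}A)|_{{\rm Nqlc}(X,\omega)}$ and is therefore ample, which is exactly the restriction hypothesis Theorem \ref{thm--abundance-quasi-log} requires. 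Note also that to apply Theorem \ref{thm--abundance-quasi-log} you must first rewrite $\omega_{i}+\lambda_{i}A_{i}$, up to a positive scalar, as $\omega_{i}+c\,(\omega_{i}+t_{0}A_{i})$ with $t_{0}\in(\lambda_{i},\lambda_{i-1})$ and $c=\lambda_{i}/(t_{0}-\lambda_{i})$, since $A_{i}$ itself is no longer ample after the first modification; your bigness remark is not needed for this. With these substitutions your outline coincides with the paper's intended proof.
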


\begin{proof}
The argument of \cite[Proof of Theorem 3.11]{has-mmp-normal-pair} works with no changes because we may use Theorem \ref{thm--mmpstep-quasi-log} and Theorem \ref{thm--abundance-quasi-log}. 
\end{proof}

\begin{cor}[cf.~{\cite[Corollary 3.12]{has-mmp-normal-pair}}]\label{cor--mmpwithscaling-normalpair}
Let $\pi \colon Y \to Z$ be a projective morphism from a normal analytic variety $Y$ to a Stein space $Z$, and let $W \subset Z$ be a compact subset such that $\pi$ and $W$ satisfy (P). 
Let $(Y,\Delta)$ be a normal pair such that $K_{Y}+\Delta$ is $\pi$-pseudo-effective and ${\rm NNef}(K_{Y}+\Delta/Z) \cap {\rm Nlc}(Y,\Delta)  \cap \pi^{-1}(W) = \emptyset$. 
Let $A$ be an effective $\mathbb{R}$-Cartier divisor on $Y$ such that $K_{Y}+\Delta+A$ is nef over $W$ and ${\rm Nlc}(Y, \Delta)={\rm Nlc}(Y, \Delta+A)$ 
set-theoretically. 
Then there exists a sequence of steps of a $(K_{Y}+\Delta)$-MMP over $Z$ around $W$ with scaling of $A$ 
$$(Y_{1},\Delta_{1}) \dashrightarrow \cdots \dashrightarrow (Y_{i},\Delta_{i}) \dashrightarrow (Y_{i+1},\Delta_{i+1})  \dashrightarrow\cdots$$
such that the non-biholomorphic locus of the MMP is disjoint from ${\rm Nlc}(Y,\Delta)$. 
Furthermore, if $Y$ is $\mathbb{Q}$-factorial over $W$, then $Y_{i}$ is also $\mathbb{Q}$-factorial over $W$ for all $i \geq 1$. 
\end{cor}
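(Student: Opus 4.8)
Since the hypotheses provide $K_{Y}+\Delta+A$ only nef over $W$, and do not give ampleness for any positive scaling, I would not reduce this to Theorem~\ref{thm--mmp-nefthreshold-strict}; instead the plan is to realize $(Y,\Delta)$ as the quasi-log complex analytic space $[Y,K_{Y}+\Delta]$ (Definition~\ref{defn--lc-trivial-fib-quasi-log}, with $X=Y$ and $\omega=K_{Y}+\Delta$, so that ${\rm Nqlc}(Y,K_{Y}+\Delta)={\rm Nlc}(Y,\Delta)$) and to build the MMP one block at a time, each block produced by Theorem~\ref{thm--mmpstep-quasi-log}. I would set $(Y_{1},\Delta_{1}):=(Y,\Delta)$, write $\omega_{j}:=K_{Y_{j}}+\Delta_{j}$ and $A_{j}$ for the strict transform of $A$, and maintain inductively a partial $(K_{Y}+\Delta)$-MMP $(Y_{1},\Delta_{1})\dashrightarrow\cdots\dashrightarrow(Y_{j},\Delta_{j})$ over $Z$ around $W$ with scaling of $A$ whose non-biholomorphic locus is disjoint from ${\rm Nlc}(Y,\Delta)$, together with the auxiliary properties that $\omega_{j}$ is $\pi$-pseudo-effective, ${\rm NNef}(\omega_{j}/Z)\cap{\rm Nlc}(Y_{j},\Delta_{j})\cap\pi^{-1}(W)=\emptyset$, $A_{j}$ is effective, and $\omega_{j}+\mu A_{j}$ is nef over $W$ and does not enlarge the non-lc locus for $\mu$ in a right neighbourhood of the current scaling number. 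These hold for $j=1$ by assumption, and — since each step will be a biholomorphism near ${\rm Nlc}$ and the non-nef locus only moves within the contracted or flipped locus — they should propagate.

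To produce the next block, put $\lambda_{j}:={\rm inf}\{\mu\in\mathbb{R}_{\geq 0}\mid\omega_{j}+\mu A_{j}\text{ is nef over }W\}\leq 1$. If $\lambda_{j}=0$ the partial MMP is already the desired one, so assume $\lambda_{j}>0$. Lemma~\ref{lem--extremal-ray} applied to $[Y_{j},\omega_{j}]$ with the effective divisor $\lambda_{j}A_{j}$ gives an $\omega_{j}$-negative extremal ray $R$ of $\overline{\rm NE}(Y_{j}/Z;W)$, rational and relatively ample at infinity, with $(\omega_{j}+\lambda_{j}A_{j})\cdot R=0$; by the cone and contraction theorem for quasi-log complex analytic spaces (\cite[Theorem~9.2]{fujino-quasi-log-analytic}), after shrinking $Z$ around $W$ there is a contraction $\varphi_{j}\colon Y_{j}\to V_{j}$ over $Z$ of $R$ with $\rho(Y_{j}/Z;W)-\rho(V_{j}/Z;W)=1$ and $-\omega_{j}$ $\varphi_{j}$-ample; moreover $\varphi_{j}$ is bimeromorphic because $\omega_{j}$ is $\pi$-pseudo-effective. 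Every curve in $R$ is $\omega_{j}$-negative, hence lies in ${\rm NNef}(\omega_{j}/Z)$ by Theorem~\ref{thm--nonnef-negativecurve} and so avoids ${\rm Nlc}(Y_{j},\Delta_{j})$; thus $\varphi_{j}$ is a biholomorphism on a neighbourhood of ${\rm Nqlc}(Y_{j},\omega_{j})={\rm Nlc}(Y_{j},\Delta_{j})$. Theorem~\ref{thm--mmpstep-quasi-log} then applies to $(Y_{j},\Delta_{j})=[Y_{j},\omega_{j}]$ and $\varphi_{j}$, and (after shrinking $Z$) produces a sequence of steps of a $(K_{Y_{j}}+\Delta_{j})$-MMP over $Z$ around $W$ from $(Y_{j},\Delta_{j})$ to a good minimal model over $V_{j}$, with non-biholomorphic locus disjoint from ${\rm Nlc}(Y_{j},\Delta_{j})$ and preserving $\mathbb{Q}$-factoriality over $W$; this is the $j$-th block, and I set $(Y_{j+1},\Delta_{j+1})$ to be its last term.

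The crux is to verify that each block is a genuine $(K_{Y}+\Delta)$-MMP over $Z$ around $W$ \emph{with scaling of $A$}. Since $\rho(Y_{j}/V_{j};W)=1$ and $\omega_{j}+\lambda_{j}A_{j}$ is nef over $W$ and zero on $R$, it is numerically trivial over $V_{j}$ near $W$; the block being an MMP over $V_{j}$, every contracted curve $C$ is over $V_{j}$ and satisfies $(\omega+\lambda_{j}A)\cdot C=0$ along the block, while at any intermediate stage the strict transform of $\omega_{j}$ is not nef over $V_{j}$, so there is a curve $C$ over $V_{j}$ with $\omega\cdot C<0$ and $A\cdot C=-\omega\cdot C/\lambda_{j}>0$, which forces the scaling number to remain $\lambda_{j}$ until the block ends. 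At the end the new scaling number is $\leq\lambda_{j}$, so the loop restarts, and the repeated shrinkings of $Z$ do no harm: a sequence of steps of an MMP over $Z$ around $W$ (Definition~\ref{defn--mmp-fullgeneral}) is by definition allowed to involve successively shrinking neighbourhoods of $W$, and Corollary~\ref{cor--mmpwithscaling-normalpair} does not demand that the MMP be represented by bimeromorphic contractions. Concatenating the blocks yields the desired MMP; its non-biholomorphic locus is the union of those of the blocks, each disjoint from ${\rm Nlc}(Y,\Delta)$ since the pairs are isomorphic near ${\rm Nlc}$ throughout, and $\mathbb{Q}$-factoriality over $W$ is preserved inductively. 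I expect the main obstacle to be precisely this scaling bookkeeping — reconciling the $(\omega_{j}+\lambda_{j}A_{j})$-triviality forced by ``$-\omega_{j}$ is $\varphi_{j}$-ample'' with the intrinsic scaling number of the lifted $(K_{Y}+\Delta)$-MMP, and checking that the auxiliary properties survive each block.
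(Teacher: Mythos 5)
Your proposal is correct and takes essentially the same route as the paper: the paper's proof of this corollary is exactly the combination of Lemma \ref{lem--extremal-ray}, the cone and contraction theorem \cite[Theorem 9.2]{fujino-quasi-log-analytic}, and Theorem \ref{thm--mmpstep-quasi-log}, with the remaining bookkeeping (preservation of the ${\rm NNef}\cap{\rm Nlc}=\emptyset$ condition and of the scaling data under the steps) deferred to the complex analytic analog of \cite[Remark 3.7]{has-mmp-normal-pair}. Your additional verifications---the biholomorphism near ${\rm Nqlc}$ via Theorem \ref{thm--nonnef-negativecurve} and the $(\omega_{j}+\lambda_{j}A_{j})$-triviality argument within each block---are precisely the standard details behind that one-line citation.
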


\begin{proof}
This follows from Lemma \ref{lem--extremal-ray}, \cite[Theorem 9.2]{fujino-quasi-log-analytic}, and Theorem \ref{thm--mmpstep-quasi-log}. 
\end{proof}

\begin{cor}[cf.~{\cite[Corollary 3.13]{has-mmp-normal-pair}}]\label{cor--mmp-nomralpair-Qfacdlt}
Let $\pi \colon Y \to Z$ be a projective morphism from a normal analytic variety $Y$ to a Stein space $Z$, and let $W \subset Z$ be a compact subset such that $\pi$ and $W$ satisfy (P). 
Let $(Y,\Delta)$ be a normal pair such that $K_{Y}+\Delta$ is $\pi$-pseudo-effective and ${\rm NNef}(K_{Y}+\Delta/Z) \cap {\rm Nlc}(Y,\Delta) \cap \pi^{-1}(W) = \emptyset$. 
Let $A$ be an $\mathbb{R}$-Cartier divisor on $Y$ such that $K_{Y}+\Delta+\lambda_{0} A$ is ample over a neighborhood of $W$ for some positive real number $\lambda_{0}$. 
Then, after shrinking $Z$ around $W$, there exists a sequence of steps of a $(K_{Y}+\Delta)$-MMP over $Z$ around $W$ with scaling of $A$ 
$$(Y_{1},\Delta_{1}) \dashrightarrow \cdots \dashrightarrow (Y_{i},\Delta_{i}) \dashrightarrow (Y_{i+1},\Delta_{i+1})  \dashrightarrow\cdots$$
such that
\begin{itemize}
\item
the non-biholomorphic locus of the MMP is disjoint from ${\rm Nlc}(Y,\Delta)$, and
\item
if we put 
$$\lambda_{i}:={\rm inf}\{\mu \in \mathbb{R}_{\geq 0}\,| \,\text{$K_{Y_{i}}+\Delta_{i}+\mu A_{i}$ is nef over $W$} \}$$  
for each $i \geq 1$ and $\lambda:={\rm lim}_{i \to \infty}\lambda_{i}$, then then the following properties hold. 
\begin{itemize}
\item
The MMP terminates after finitely many steps or otherwise $\lambda \neq \lambda_{i}$ for every $i \geq 1$, and 
\item
$K_{Y_{i}}+\Delta_{i}+t A_{i}$ is semi-ample over a neighborhood of $W$ for all $i \geq 1$ and any $t \in (\lambda_{i},\lambda_{i-1}]$. 
\end{itemize}
\end{itemize}
Furthermore, if $Y$ is $\mathbb{Q}$-factorial over $W$, then $Y_{i}$ is also $\mathbb{Q}$-factorial over $W$ for every $i \geq 1$. 
\end{cor}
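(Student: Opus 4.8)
\emph{Proof proposal.} The plan is to deduce the statement from Theorem~\ref{thm--mmp-nefthreshold-strict} by taking the quasi-log structure to be the trivial one. Regard $(Y,\Delta)$ as the quasi-log complex analytic space induced by a normal pair $f\colon(Y,\Delta)\to[Y,K_{Y}+\Delta]$ of Definition~\ref{defn--lc-trivial-fib-quasi-log}, namely the case $X=Y$, $\omega=K_{Y}+\Delta$, $f={\rm id}_{Y}$; this is legitimate because ${\rm Nlc}(Y,\Delta)$ is always a proper closed subset of $Y$ (it is disjoint from the dense open $Y_{\rm sm}\setminus{\rm Supp}\,\Delta$), and then ${\rm Nqlc}(Y,K_{Y}+\Delta)={\rm Nlc}(Y,\Delta)$. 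With this identification the present hypotheses are exactly those of Theorem~\ref{thm--mmp-nefthreshold-strict}: $\omega=K_{Y}+\Delta$ is $\pi$-pseudo-effective, ${\rm NNef}(\omega/Z)\cap{\rm Nqlc}(Y,\omega)\cap\pi^{-1}(W)=\emptyset$, $\omega+\lambda_{0}A$ is ample over a neighborhood of $W$, and $\pi\circ f=\pi$ and $W$ satisfy (P). Applying that theorem and shrinking $Z$ around $W$, I obtain a sequence of steps of a $(K_{Y}+\Delta)$-MMP over $Z$ around $W$ with scaling of $f^{*}A=A$ — the upper horizontal row there — together with the milestone quasi-log data $[X_{j},\omega_{j}]$ and a strictly decreasing sequence $\lambda^{(1)}>\lambda^{(2)}>\cdots$ (put $\lambda^{(0)}:=\lambda_{0}$) for which $\omega_{j}+\lambda^{(j-1)}A_{j}$ is semi-ample over a neighborhood of $W$ and $\omega_{j}+tA_{j}$ is ample over a neighborhood of $W$ for $t\in(\lambda^{(j)},\lambda^{(j-1)})$.

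Relabelling this MMP as $(Y_{1},\Delta_{1})\dashrightarrow(Y_{2},\Delta_{2})\dashrightarrow\cdots$, I claim it is the desired one. Its individual steps are produced inside Theorem~\ref{thm--mmp-nefthreshold-strict} by iterating Theorem~\ref{thm--mmpstep-quasi-log}, whose construction (via Lemma~\ref{lem--mmp-normal-pair-run}) makes every step a biholomorphism on a neighborhood of ${\rm Nqlc}$, hence on a neighborhood of ${\rm Nlc}(Y,\Delta)$; concatenating, the non-biholomorphic locus of the MMP is disjoint from ${\rm Nlc}(Y,\Delta)$. Moreover each step has relative Picard number drop one (Lemma~\ref{lem--mmp-normal-pair-run}), so $\mathbb{Q}$-factoriality over $W$ propagates from $Y$ to every $Y_{i}$.

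It remains to extract the statements about $\lambda_{i}:={\rm inf}\{\mu\ge0\mid K_{Y_{i}}+\Delta_{i}+\mu A_{i}\text{ is nef over }W\}$ and $\lambda:={\rm lim}_{i\to\infty}\lambda_{i}$. By the construction in Theorem~\ref{thm--mmp-nefthreshold-strict} the MMP runs in blocks: the $j$-th block starts at the model $X_{j}$ and consists of steps each contracting an extremal ray on which $K_{Y}+\Delta+\lambda^{(j)}A$ is trivial, so $\lambda_{i}=\lambda^{(j)}$ throughout the $j$-th block, and by Theorem~\ref{thm--mmpstep-quasi-log} each block is a finite sequence of steps. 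Hence $\{\lambda_{i}\}$ is piecewise constant, attaining exactly the values $\lambda^{(1)}>\lambda^{(2)}>\cdots$; consequently either the MMP terminates, or it has infinitely many blocks and $\lambda={\rm inf}_{j}\lambda^{(j)}<\lambda^{(j)}=\lambda_{i}$ for every $i$, which is the first bullet. For the second bullet: if $Y_{i}$ is not the first model of its block then $\lambda_{i-1}=\lambda_{i}$ and $(\lambda_{i},\lambda_{i-1}]$ is empty, so there is nothing to prove; if $Y_{i}=X_{j}$ is the first model of the $j$-th block, then $\lambda_{i}=\lambda^{(j)}$, $\lambda_{i-1}=\lambda^{(j-1)}$, and for $t\in(\lambda^{(j)},\lambda^{(j-1)}]$ the divisor $K_{Y_{i}}+\Delta_{i}+tA_{i}=\omega_{j}+tA_{j}$ is ample over a neighborhood of $W$ for $t$ in the open interval and equals the semi-ample divisor $\omega_{j}+\lambda^{(j-1)}A_{j}$ at the endpoint (for $j=1$ it is the ample divisor $K_{Y}+\Delta+\lambda_{0}A$ at $t=\lambda_{0}$); in every case $K_{Y_{i}}+\Delta_{i}+tA_{i}$ is semi-ample over a neighborhood of $W$.

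The real content of the corollary has already been packaged into Theorem~\ref{thm--mmp-nefthreshold-strict}, which rests on Theorem~\ref{thm--mmpstep-quasi-log} and thus on the new analytic ingredient Lemma~\ref{lem--mmp-normal-pair-run}; what is left here is bookkeeping. The only point requiring genuine care is the one isolated in the previous paragraph — that the full sequence of steps, not merely the milestone models $X_{j}$, inherits the threshold dichotomy and the semi-ampleness — and this reduces to the constancy of the nef threshold along each block together with the finiteness of each block.
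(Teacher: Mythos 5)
Your overall route — regard $(Y,\Delta)$ as the quasi-log space $[Y,K_{Y}+\Delta]$ induced by the identity and apply Theorem \ref{thm--mmp-nefthreshold-strict} — is exactly the reduction the paper intends (its proof is a citation of the algebraic argument for \cite[Corollary 3.13]{has-mmp-normal-pair}, which proceeds the same way), and your treatment of the disjointness from ${\rm Nlc}(Y,\Delta)$, the $\mathbb{Q}$-factoriality, and the ``terminate or $\lambda\neq\lambda_{i}$'' dichotomy via finiteness of blocks is in the right spirit. The problem is in the step you yourself single out as the only delicate one.

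You identify the first model of the $j$-th block with $X_{j}$ and write $K_{Y_{i}}+\Delta_{i}+tA_{i}=\omega_{j}+tA_{j}$. This is false in general. In Theorem \ref{thm--mmp-nefthreshold-strict} the upper-row models $Y_{k_{j}}$ and the lower-row quasi-log spaces $X_{j}$ are related only by contractions $f_{j}\colon Y_{k_{j}}\to X_{j}$, and even though $f_{1}={\rm id}$ in your situation, for $j\geq 2$ the morphism $f_{j}$ is the contraction induced by the relatively semi-ample divisor $K_{Y_{k_{j}}}+\Delta_{k_{j}}$ over $V_{j-1}$ and may contract $(K+\Delta)$-trivial curves, so $Y_{k_{j}}\neq X_{j}$ in general. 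Consequently your two key assertions — that $\lambda_{i}=\lambda^{(j)}$ throughout the $j$-th block, and that the ampleness/semi-ampleness of $\omega_{j}+tA_{j}$ transfers to $K_{Y_{i}}+\Delta_{i}+tA_{i}$ — do not follow as stated: $K_{Y_{k_{j}}}+\Delta_{k_{j}}\sim_{\mathbb{R}}f_{j}^{*}\omega_{j}$, but the scaling divisor on the $Y$-side is the strict transform of $A$, which is not obviously $f_{j}^{*}A_{j}$, and the difference is $f_{j}$-exceptional, so the nef thresholds and intersection numbers on the two rows need not match. What is missing is the compatibility argument: show inductively that $A_{k_{j}}\sim_{\mathbb{R}}f_{j}^{*}A_{j}$ (equivalently, that $K+\Delta+\lambda^{(j)}A$ stays $\mathbb{R}$-linearly a pullback from $V_{j}$ along every step of the block, using that $\omega_{j}+\lambda^{(j)}A_{j}$ is trivial on the contracted ray and hence descends along $X_{j}\to V_{j}$, and that pullbacks are preserved under the bimeromorphic contractions of the block; when $\lambda^{(j)}>0$ this forces $A$ on $Y_{k_{j+1}}$ to be a pullback from $X_{j+1}$). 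With that in hand your bookkeeping goes through, because semi-ampleness and nefness over $W$ are preserved under pullback by the projective bimeromorphic $f_{j}$; without it, the constancy of the threshold along blocks and the semi-ampleness on $(\lambda_{i},\lambda_{i-1}]$ are unproved, and this is precisely the content that the cited algebraic proof supplies.
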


\begin{proof}
The argument of \cite[Proof of Corollary 3.13]{has-mmp-normal-pair} works with no changes. 
\end{proof}

\begin{rem}
By the standard argument, we can check that the following properties of a normal pair $(Y,\Delta)$ are preserved under the $(K_{Y}+\Delta)$-MMP in Corollary \ref{cor--mmpwithscaling-normalpair} and Corollary \ref{cor--mmp-nomralpair-Qfacdlt}. 
\begin{itemize} \item $Y$ is $\mathbb{Q}$-factorial over $W$, \item $(Y,0)$ is $\mathbb{Q}$-factorial klt, and \item $(Y,\Delta^{<1}+{\rm Supp}\,\Delta^{\geq 1})$ is dlt.  \end{itemize}
\end{rem}

\begin{rem}[cf.~{\cite[Question 1.1]{has-mmp-normal-pair}, \cite[Remark 3.16]{has-mmp-normal-pair}}]\label{rem--mmp-reduction}
As in the algebraic case, we may consider the following question: Let $\pi \colon X \to Z$ be a projective morphism from a normal analytic variety to a Stein spece, and let $W \subset Z$ be a compact subset such that $\pi$ and $W$ satisfy (P). 
Let $(X,\Delta)$ be a normal pair such that $K_{X}+\Delta$ is $\pi$-pseudo-effective and ${\rm NNef}(K_{X}+\Delta/Z) \cap {\rm Nlc}(X,\Delta)  \cap \pi^{-1}(W) = \emptyset$. 
Then, is there a finite sequence of steps of a $(K_{X}+\Delta)$-MMP over $Z$ around$W$
$$(X,\Delta)=:(X_{1},\Delta_{1}) \dashrightarrow\cdots \dashrightarrow  (X_{i},\Delta_{i}) \dashrightarrow \cdots \dashrightarrow (X_{m},\Delta_{m})$$ such that $K_{X_{m}}+\Delta_{m}$ is nef over $Z$?

The existence of a sequence of steps of a $(K_{X}+\Delta)$-MMP over $Z$ around $W$ was proved by Corollary \ref{cor--mmpwithscaling-normalpair} or Corollary \ref{cor--mmp-nomralpair-Qfacdlt}. 
By the argument as in \cite[Remark 3.16]{has-mmp-normal-pair}, the question can be reduced to the termination of all MMP for all klt pairs in the analytic setting. 
\end{rem}

\section{Minimal model program along log canonical locus}\label{sec--mmp-lc}

In this section we prove the main result of this paper and corollaries. 

\subsection{Minimal model program along Kawamata log terminal locus}\label{sec--mmp-klt}

In this subsection we list the complex analytic analog of the main results in \cite[Section 4]{has-mmp-normal-pair}.

\begin{thm}[cf.~{\cite[Theorem 4.4]{has-mmp-normal-pair}}]\label{thm--from-nonvanish-to-minmodel}
Let $\pi \colon X \to Z$ be a projective morphism from a normal analytic variety $X$ to a Stein space $Z$. 
Let $W \subset Z$ be a compact subset such that $\pi$ and $W$ satisfy (P). 
Let $(X,\Delta)$ be a normal pair and $A$ an effective $\pi$-ample $\mathbb{R}$-divisor on $X$ such that ${\rm Nklt}(X,\Delta) \neq \emptyset$ and 
${\rm Bs}|K_{X}+\Delta+A/Z|_{\mathbb{R}} \cap {\rm Nklt}(X,\Delta+A)  \cap \pi^{-1}(W)= \emptyset.$ 
Then, after shrinking $Z$ around $W$, $(X,\Delta+A)$ has a $\mathbb{Q}$-factorial good minimal model over $Z$ around $W$. 
\end{thm}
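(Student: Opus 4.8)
\emph{Reductions.} The proof follows that of \cite[Theorem 4.4]{has-mmp-normal-pair} with each ingredient replaced by its complex analytic counterpart, and proceeds by induction on $\dim X$. Since ${\rm Nklt}(X,\Delta)\neq\emptyset$, the hypothesis forces $|K_{X}+\Delta+A/Z|_{\mathbb R}\neq\emptyset$, so $K_{X}+\Delta+A$ is $\pi$-pseudo-effective, and being the sum of a $\pi$-pseudo-effective $\mathbb R$-Cartier divisor and the $\pi$-ample divisor $A$ it is $\pi$-big. By Remark~\ref{rem--nonnef-relation}, ${\rm NNef}(K_{X}+\Delta+A/Z)\subset{\rm Bs}|K_{X}+\Delta+A/Z|_{\mathbb R}$, whence ${\rm NNef}(K_{X}+\Delta+A/Z)\cap{\rm Nlc}(X,\Delta+A)\cap\pi^{-1}(W)=\emptyset$ because ${\rm Nlc}\subset{\rm Nklt}$. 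Applying the dlt blow-up Theorem~\ref{thm--dlt-blowup} to $(X,\Delta+A)$, together with the fact that a $\mathbb Q$-factorial (good) minimal model of the dlt blow-up is one of $(X,\Delta+A)$ (the analytic analogue of \cite[Lemma 3.2, Proposition 3.4]{has-mmp-normal-pair}; absorbing the exceptional divisors and keeping an ample summand is routine), I may assume that $X$ is $\mathbb Q$-factorial over $W$ and that $\bigl(X,(\Delta+A)^{<1}+{\rm Supp}\,(\Delta+A)^{\geq 1}\bigr)$ is dlt. Put $B:=\Delta+A$ and view $X$ as the quasi-log complex analytic space $[X,K_{X}+B]$ induced by $(X,B)$ (Definition~\ref{defn--lc-trivial-fib-quasi-log}), so that ${\rm Nqlc}(X,K_{X}+B)={\rm Nlc}(X,B)$ and ${\rm Nqklt}(X,K_{X}+B)={\rm Nklt}(X,B)$.

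\emph{Semi-ampleness along the non-klt locus.} If ${\rm Nklt}(X,B)\cap\pi^{-1}(W)=\emptyset$, then after shrinking $Z$ around $W$ the pair $(X,B)$ is klt with $\pi$-big $K_{X}+B$, so analytic BCHM (\cite{fujino-analytic-bchm, eh-analytic-mmp}) produces a $\mathbb Q$-factorial minimal model of $(X,B)$ over $Z$ around $W$, which is good by the base point free theorem, and we are done; so assume ${\rm Nklt}(X,B)\cap\pi^{-1}(W)\neq\emptyset$. The locus ${\rm Nqklt}(X,K_{X}+B)$ carries the quasi-log structure induced from $[X,K_{X}+B]$, and for each of its irreducible components $S$ one obtains, by adjunction or by the canonical bundle formula (Lemma~\ref{lem--can-bundle-formula} and the analytic analogue of \cite{fg-bundle}) after passing to a suitable model, a normal pair $(S',\Delta_{S'}+A_{S'})$ with $\dim S'<\dim X$, with $A_{S'}$ effective and ample, and with $K_{S'}+\Delta_{S'}+A_{S'}\sim_{\mathbb R,\,Z}(K_{X}+B)|_{S'}$. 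Since $S\subset{\rm Nklt}(X,B)$, the hypothesis together with Remark~\ref{rem--stable-base-locus-restriction} gives ${\rm Bs}|K_{S'}+\Delta_{S'}+A_{S'}/Z|_{\mathbb R}\cap\pi^{-1}(W)=\emptyset$, so the base-locus hypothesis of the theorem holds for $(S',\Delta_{S'}+A_{S'})$ in dimension $<\dim X$; by the induction hypothesis (or by analytic BCHM when $S'$ is klt) it admits a $\mathbb Q$-factorial good minimal model over $Z$ around $W$. Feeding these into Theorem~\ref{thm--abundance-quasi-log}, applied to the induced quasi-log structure on ${\rm Nqklt}(X,K_{X}+B)$ and recursively on its strata, I conclude that $(K_{X}+B)|_{{\rm Nqklt}(X,K_{X}+B)}$ is semi-ample over a neighbourhood of $W$.

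\emph{Running the MMP, and the main obstacle.} By Lemma~\ref{lem--extremal-ray}, the cone and contraction theorem for quasi-log complex analytic spaces (\cite[Theorem 9.2]{fujino-quasi-log-analytic}), and Theorem~\ref{thm--mmpstep-quasi-log} (equivalently Corollary~\ref{cor--mmpwithscaling-normalpair}), after shrinking $Z$ around $W$ I would run a $(K_{X}+B)$-MMP over $Z$ around $W$ with scaling of a general $\pi$-ample $\mathbb R$-divisor whose non-biholomorphic locus is disjoint from ${\rm Nlc}(X,B)$. Away from a neighbourhood of ${\rm Nklt}(X,B)$ this is an MMP for the klt, $\pi$-big pair $(X,B)$, which terminates by analytic BCHM; the steps meeting ${\rm Nklt}(X,B)$ are controlled by the special-termination mechanism of Lemma~\ref{lem--mmp-normal-pair-run}, using the semi-ampleness of $(K_{X}+B)|_{{\rm Nqklt}}$ obtained above, so the MMP terminates with a minimal model $(X_{m},B_{m})$ of $(X,B)$ that is $\mathbb Q$-factorial over $W$. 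Finally $K_{X_{m}}+B_{m}$ is nef over $W$ and $\pi$-big, and its restriction to ${\rm Nqklt}(X_{m},K_{X_{m}}+B_{m})$ stays semi-ample over a neighbourhood of $W$ along the MMP, so Theorem~\ref{thm--abundance-quasi-log} gives that $K_{X_{m}}+B_{m}$ is semi-ample over a neighbourhood of $W$; thus $(X_{m},B_{m})$ is a $\mathbb Q$-factorial good minimal model of $(X,\Delta+A)$ over $Z$ around $W$. The hard part is exactly this last step: showing that the $(K_{X}+B)$-MMP with scaling terminates globally, so that the steps meeting ${\rm Nklt}(X,B)$ do not obstruct termination, and that the output is genuinely good. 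This is where the argument of \cite{has-mmp-normal-pair}, which relies on gluing of MMPs, does not transcribe directly, and one must instead combine Theorem~\ref{thm--mmpstep-quasi-log}, Lemma~\ref{lem--mmp-normal-pair-run}, the inductively established semi-ampleness on ${\rm Nqklt}$, and Theorem~\ref{thm--abundance-quasi-log}. The remaining points---extracting, after shrinking $Z$, a single effective $D\sim_{\mathbb R,\,Z}K_{X}+\Delta+A$ with ${\rm Supp}\,D$ disjoint from ${\rm Nklt}(X,\Delta+A)$ near $W$ via compactness of $W$, and descending minimal models along the dlt blow-up---are routine.
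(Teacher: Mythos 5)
Your overall architecture is not the one the paper uses, and the replacement you propose for the crux has a genuine gap. The paper proves this theorem by transcribing the algebraic argument of \cite[Propositions 4.1, 4.2, 4.3, Theorem 4.4]{has-mmp-normal-pair}: from ${\rm Bs}|K_{X}+\Delta+A/Z|_{\mathbb{R}}\cap{\rm Nklt}(X,\Delta+A)\cap\pi^{-1}(W)=\emptyset$ one chooses (after shrinking $Z$) an effective $D\sim_{\mathbb{R},Z}K_{X}+\Delta+A$ with ${\rm Supp}\,D$ disjoint from ${\rm Nklt}(X,\Delta+A)$ near $\pi^{-1}(W)$, and the three propositions reduce the existence of a $\mathbb{Q}$-factorial good minimal model to Fujino's analytic BCHM for klt pairs with big boundary, together with the analytic analogs of \cite[Lemma 3.2, Proposition 3.4]{has-mmp-normal-pair} for transporting minimal models across dlt blow-ups and boundary perturbations. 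No induction on dimension, no quasi-log adjunction, and no termination statement along the non-klt locus is needed. Incidentally, your induction step for semi-ampleness of $(K_{X}+B)|_{{\rm Nqklt}}$ is superfluous (the restriction is $\mathbb{R}$-linearly trivial near $W$ because $D|_{{\rm Nklt}}$ vanishes there) and, as set up, also unsound: good minimal models of adjoint pairs on birational models of the strata do not give nefness of the restriction on the stratum itself, which Theorem \ref{thm--abundance-quasi-log} requires as a hypothesis.

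The fatal point is your termination step. Lemma \ref{lem--mmp-normal-pair-run} is not a special-termination device for an MMP over $Z$: its hypotheses demand an open subset $\widetilde{U}$ of the base containing the image of the \emph{entire} non-lc locus over which $K_{X}+\Delta+A$ itself (not merely its restriction to the non-klt locus) is semi-ample, together with local existence of good minimal models at every point of $\widetilde{W}\setminus\widetilde{U}$. These hold in the paper only in the relative situation over the target $V$ of a single extremal contraction, where $\omega\sim_{\mathbb{R},V}(\text{anti-ample})$ and the contraction is a biholomorphism near ${\rm Nqlc}$ (this is exactly how Theorem \ref{thm--mmpstep-quasi-log} is proved); they are not available for the $(K_{X}+B)$-MMP over $Z$ around $W$, where the image of ${\rm Nklt}(X,B)$ need not lie in any such $\widetilde{U}$ and $K_{X}+B$ is certainly not semi-ample over a neighbourhood of that image. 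Likewise, ``special termination'' is a statement about dlt/lc pairs and in any case only yields that the MMP is eventually a biholomorphism near the relevant centers, not global termination; and the results of this paper that do convert semi-ampleness along the non-klt locus into termination (Theorems \ref{thm--from-minimodel-to-termi}, \ref{thm--non-vanishing-kltmmp-main}, \ref{thm--klt-minmodeltheory-main}) are proved \emph{after} and, following \cite{has-mmp-normal-pair}, \emph{by means of} the present theorem, so appealing to that circle of ideas here would be circular. Finally, the reduction you call routine (dlt blow-up ``keeping an ample summand'') is itself one of the points the cited Propositions 4.1--4.3 handle carefully, since the pullback of $A$ is only nef and big on the blow-up and must be traded for a genuinely ample divisor by perturbation before BCHM-type results apply.
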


\begin{proof}
The proofs of \cite[Propositions 4.1, 4.2, 4.3, Theorem 4.4]{has-mmp-normal-pair} work in our situation because we may use the results in \cite{fujino-analytic-bchm} instead of \cite{bchm} and furthermore we may use the complex analytic analog of \cite[Lemma 3.2, Proposition 3,4]{has-mmp-normal-pair}. 
\end{proof}

\begin{thm}[cf.~{\cite[Theorem 4.6]{has-mmp-normal-pair}}]\label{thm--from-minimodel-to-termi}
Let $\pi \colon X \to Z$ be a projective morphism from a normal analytic variety $X$ to a Stein space $Z$. 
Let $W \subset Z$ be a compact subset such that $\pi$ and $W$ satisfy (P). 
Let $(X,\Delta)$ be a normal pair and $A$ an effective $\pi$-ample $\mathbb{R}$-divisor on $X$ such that ${\rm Nklt}(X,\Delta) \neq \emptyset$ and 
${\rm Bs}|K_{X}+\Delta+A/Z|_{\mathbb{R}} \cap {\rm Nklt}(X,\Delta)  \cap \pi^{-1}(W) = \emptyset.$  
Let 
$$(X_{1},\Delta_{1}+A_{1}) \dashrightarrow \cdots \dashrightarrow (X_{i},\Delta_{i}+A_{i}) \dashrightarrow \cdots$$
be a sequence of steps of a $(K_{X}+\Delta+A)$-MMP over $Z$ around $W$ with scaling of $A$ such that if we put $$\lambda_{i}:=\{\mu \in \mathbb{R}_{\geq 0}\,| \,\text{$(K_{X_{i}}+\Delta_{i}+A_{i})+\mu A_{i}$ is nef over $W$} \}$$ for each $i \geq 1$, then 
${\rm lim}_{i \to \infty} \lambda_{i}=0$.  
Then $\lambda_{m}=0$ for some $m$ and $(X_{m},\Delta_{m}+A_{m})$ is a good minimal model of $(X,\Delta+A)$ over $Z$ around $W$. 
\end{thm}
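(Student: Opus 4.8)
The plan is to upgrade the given data $\lim_{i\to\infty}\lambda_{i}=0$ to an honest vanishing $\lambda_{m}=0$ by combining the existence of a good minimal model (Theorem \ref{thm--from-nonvanish-to-minmodel}) with the special termination of MMP with scaling for lc pairs admitting a weak lc model (Lemma \ref{lem--termination-mmp-generalized}), and then patching the resulting local statements by the compactness of $W$.

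First I would place myself in a setting where Theorem \ref{thm--from-nonvanish-to-minmodel} applies. Replacing $A$, within its $\mathbb{R}$-linear equivalence class over $Z$ and after shrinking $Z$ around $W$, by a general effective $\mathbb{R}$-divisor all of whose coefficients are less than $1$, I may arrange that ${\rm Nklt}(X,\Delta+A)={\rm Nklt}(X,\Delta)$ on a neighborhood of $\pi^{-1}(W)$; this changes neither the good minimal models of $(X,\Delta+A)$ nor the base locus ${\rm Bs}|K_{X}+\Delta+A/Z|_{\mathbb{R}}$. Thus Theorem \ref{thm--from-nonvanish-to-minmodel} provides, after shrinking $Z$ around $W$, a $\mathbb{Q}$-factorial good minimal model of $(X,\Delta+A)$ over $Z$ around $W$; in particular $(X,\Delta+A)$ admits a weak lc model over $Z$ around $W$, and since the non-biholomorphic locus of that construction is disjoint from ${\rm Nlc}(X,\Delta)$, the divisor $K_{X}+\Delta+A$ is already semi-ample over a neighborhood of $W$ on a neighborhood of ${\rm Nlc}(X,\Delta)$.

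The core is the local statement: for each $w\in W$ there are an index $n_{w}$ and an open neighborhood $Z_{w}\ni w$ in $Z$ such that $K_{X_{n_{w}}}+\Delta_{n_{w}}+A_{n_{w}}$ is nef over $Z_{w}$. Fix $w$. Since ${\rm Nlc}(X,\Delta)\subset{\rm Nklt}(X,\Delta)$, the base-locus hypothesis gives ${\rm Bs}|K_{X}+\Delta+A/Z|_{\mathbb{R}}\cap{\rm Nlc}(X,\Delta)\cap\pi^{-1}(W)=\emptyset$; together with the fact that the non-biholomorphic locus of the given MMP is disjoint from ${\rm Nlc}(X,\Delta)$ and with the previous paragraph, this shows that near ${\rm Nlc}(X,\Delta)$ every step of the MMP is a biholomorphism and $K_{X}+\Delta+A$ is already nef over $W$. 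Over the complement of the non-lc locus the pair is lc, so, using a dlt blow-up (Theorem \ref{thm--dlt-blowup}) together with the quasi-log machinery (Definition \ref{defn--lc-trivial-fib-quasi-log}, Theorem \ref{thm--mmpstep-quasi-log}, Theorem \ref{thm--abundance-quasi-log}), I would, over a suitable Stein neighborhood of $w$, promote the given MMP to a sequence of steps of a $(K_{Y}+\Delta_{Y})$-MMP with scaling for an \emph{lc} pair $(Y,\Delta_{Y})$ which, by restricting the good minimal model built above, admits a weak lc model over that neighborhood around $w$. Since the scaling thresholds still tend to $0$, Lemma \ref{lem--termination-mmp-generalized} yields an index $n_{w}$ at which the threshold is $0$, and translating back gives nefness of $K_{X_{n_{w}}}+\Delta_{n_{w}}+A_{n_{w}}$ over a neighborhood $Z_{w}$ of $w$.

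Finally, covering the compact set $W$ by finitely many $Z_{w_{1}},\dots,Z_{w_{p}}$ as above and setting $m:=\max_{1\leq j\leq p}n_{w_{j}}$, the monotonicity of $\{\lambda_{i}\}$ gives that $K_{X_{m}}+\Delta_{m}+A_{m}$ is nef over a neighborhood of $W$, i.e.\ $\lambda_{m}=0$. That $(X_{m},\Delta_{m}+A_{m})$ is then a good minimal model of $(X,\Delta+A)$ over $Z$ around $W$ follows because the MMP is $(K_{X}+\Delta+A)$-negative and represented by bimeromorphic contractions, so the discrepancy conditions of Definition \ref{defn--minmodel} hold and $(X_{m},\Delta_{m}+A_{m})$ is a minimal model; and since $(X,\Delta+A)$ has a good minimal model, the negativity-lemma comparison of minimal models (the analytic analog of \cite[Lemma 3.2, Proposition 3.4]{has-mmp-normal-pair}) forces $K_{X_{m}}+\Delta_{m}+A_{m}$ to be semi-ample over a neighborhood of $W$. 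The step I expect to be the main obstacle is the promotion carried out for each $w\in W$: extracting from the normal-pair MMP with scaling a genuine lc MMP with scaling over a small neighborhood to which Lemma \ref{lem--termination-mmp-generalized} applies, while keeping track of the non-lc locus and of the repeated shrinkings of $Z$ — this local-to-global patching is precisely what substitutes, in the analytic setting, for the global gluing of minimal model programs used in the algebraic proof of \cite[Theorem 4.6]{has-mmp-normal-pair}.
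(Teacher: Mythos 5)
Your first and last paragraphs are essentially right and match the intended route: replacing $A$ by a general member of $|A/Z|_{\mathbb{R}}$ so that ${\rm Nklt}(X,\Delta+A)={\rm Nklt}(X,\Delta)$ near $\pi^{-1}(W)$ and invoking Theorem \ref{thm--from-nonvanish-to-minmodel} is exactly the analog of \cite[Corollary 4.5]{has-mmp-normal-pair} that the paper's proof uses, and the final deduction that $(X_{m},\Delta_{m}+A_{m})$ is a good minimal model once $\lambda_{m}=0$ is the standard comparison argument. The genuine gap is the middle ``promotion'' step, which you yourself flag. Concretely: (i) for $w\in \pi({\rm Nlc}(X,\Delta))\cap W$ the pair restricted over any neighborhood of $w$ is not lc, and a dlt blow-up (Theorem \ref{thm--dlt-blowup}) of a non-lc pair does not produce an lc pair crepant to $(X,\Delta+A)$: one has $K_{Y}+\Delta_{Y}=f^{*}(K_{X}+\Delta)$ with $\Delta_{Y}$ having coefficients $>1$, while the dlt pair $(Y,\Delta_{Y}^{<1}+{\rm Supp}\,\Delta_{Y}^{\geq 1})$ has a different log canonical divisor; so there is no lc pair with the right log canonical class to which Lemma \ref{lem--termination-mmp-generalized} could be applied. (ii) Even granting such a pair, ``promoting'' the \emph{given} MMP on $X$ to a sequence of steps of an MMP with scaling on a higher birational model over a neighborhood of $w$ is not justified: Theorem \ref{thm--mmpstep-quasi-log} and Lemma \ref{lem--mmp-normal-pair-run} construct new MMPs, they do not lift a prescribed one, and lifting MMP steps is exactly the kind of delicate point the paper goes out of its way to avoid. (iii) Two smaller holes in the same step: the restriction of the given MMP over a small neighborhood of $w$ need not satisfy the ``with scaling'' condition of Definition \ref{defn--mmp-fullgeneral} around $\{w\}$, because the local nef thresholds can be strictly smaller than $\lambda_{i}$; and nefness of $K_{X_{n_{w}}}+\Delta_{n_{w}}+A_{n_{w}}$ over $Z_{w}$ does not persist to stage $m$ by ``monotonicity of $\lambda_{i}$'' alone ($\lambda_{i}$ measures nefness over all of $W$, not over $Z_{w}$); persistence is true but needs a separate argument that later steps are biholomorphic over $Z_{w}$.

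By contrast, the paper's proof is the algebraic argument of \cite[Theorem 4.6]{has-mmp-normal-pair} run verbatim: once the analog of \cite[Corollary 4.5]{has-mmp-normal-pair} gives a good minimal model of $(X,\Delta+A)$ over $Z$ around $W$ (your first paragraph), the vanishing $\lambda_{m}=0$ is obtained by applying the termination-with-scaling criterion globally and directly to the normal pair -- the underlying negativity-lemma argument (the analytic analog of the relevant statements of \cite[Section 2]{has-mmp-normal-pair}, which the paper records as holding with unchanged proofs) never uses log canonicity, so no localization over $W$ and no reduction to the lc-pair Lemma \ref{lem--termination-mmp-generalized} is needed. That reduction is precisely where your argument breaks down, so as written the proposal does not constitute a proof.
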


\begin{proof}
The argument of \cite[Proof of Theorem 4.6]{has-mmp-normal-pair} works with no changes. 
We note that the complex analytic analog of \cite[Corollary 4.5]{has-mmp-normal-pair} holds and we may use it to prove Theorem \ref{thm--from-minimodel-to-termi}. 
\end{proof}

\begin{thm}[cf.~{\cite[Theorem 4.10]{has-mmp-normal-pair}}]\label{thm--non-vanishing-kltmmp-main}
Let $\pi \colon X \to Z$ be a projective morphism from a normal analytic variety $X$ to a Stein space $Z$, and let $W \subset Z$ be a compact subset such that $\pi$ and $W$ satisfy (P). 
Let $(X,\Delta)$ be a normal pair and $A$ a $\pi$-ample $\mathbb{R}$-divisor on $X$ such that $K_{X}+\Delta+A$ is globally $\mathbb{R}$-Cartier and $\pi$-pseudo-effective. 
Suppose that ${\rm NNef}(K_{X}+\Delta+A/Z) \cap {\rm Nklt}(X,\Delta) \cap \pi^{-1}(W) = \emptyset$. 
Suppose in addition that $(K_{X}+\Delta+A)|_{{\rm Nklt}(X,\Delta)}$, which we think of an $\mathbb{R}$-line bundle on ${\rm Nklt}(X,\Delta)$, is semi-ample over a neighborhood of $W$. 
Then we have ${\rm Bs}|K_{X}+\Delta+A/Z|_{\mathbb{R}} \cap {\rm Nklt}(X,\Delta) \cap \pi^{-1}(W) = \emptyset$. 
\end{thm}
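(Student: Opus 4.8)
The plan is to turn the hypothesis ``${\rm NNef}(K_{X}+\Delta+A/Z)$ is empty near ${\rm Nklt}(X,\Delta)$'' into a semi-ampleness statement by running a relative minimal model program with scaling whose modifications avoid ${\rm Nklt}(X,\Delta)$, and then to close the gap between nef-ness and semi-ampleness with abundance for quasi-log complex analytic spaces. The assertion is vacuous unless ${\rm Nklt}(X,\Delta)\neq\emptyset$, so I assume this. After shrinking $Z$ around $W$ I may assume $A$ is globally $\mathbb{R}$-Cartier, and replacing $A$ by a general member of $|A/Z|_{\mathbb{R}}$ I may also assume $(X,\Delta+A)$ is a normal pair with ${\rm Nlc}(X,\Delta+A)={\rm Nlc}(X,\Delta)$; this changes neither the hypotheses nor the conclusion, since both depend only on the relative $\mathbb{R}$-linear class of $K_{X}+\Delta+A$ and on the restriction to ${\rm Nklt}(X,\Delta)$.

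First I would apply Lemma \ref{lem--can-bundle-formula} (after reducing to $\mathbb{Q}$-coefficients by the usual perturbation, or via an $\mathbb{R}$-coefficient version of the canonical bundle formula) to the quasi-log complex analytic space $[X,K_{X}+\Delta]$ induced by $(X,\Delta)$ and the $\pi$-ample divisor $A$, producing a quasi-log structure $[X,\omega]$ induced by a normal pair with $\omega\sim_{\mathbb{R},Z}K_{X}+\Delta+A$ and ${\rm Supp}\,{\rm Nqklt}(X,\omega)={\rm Nklt}(X,\Delta)$. Then ${\rm NNef}(\omega/Z)\cap{\rm Nqlc}(X,\omega)\cap\pi^{-1}(W)\subseteq{\rm NNef}(K_{X}+\Delta+A/Z)\cap{\rm Nklt}(X,\Delta)\cap\pi^{-1}(W)=\emptyset$, so by Theorem \ref{thm--mmp-nefthreshold-strict}, applied with an auxiliary $\mathbb{R}$-Cartier divisor $H$ for which $\omega+\lambda_{0}H$ is ample over a neighbourhood of $W$ for some $\lambda_{0}>0$, I obtain, after shrinking $Z$, a sequence of steps of an $\omega$-MMP over $Z$ around $W$ with scaling of $H$, say $[X,\omega]=[X_{1},\omega_{1}]\dashrightarrow[X_{2},\omega_{2}]\dashrightarrow\cdots$, together with the accompanying MMP on the normal pair side, along which the nef thresholds $\lambda_{i}$ strictly decrease, $\omega_{i}+\lambda_{i-1}H_{i}$ is semi-ample over a neighbourhood of $W$, and $\omega_{i}+tH_{i}$ is ample over a neighbourhood of $W$ for $t\in(\lambda_{i},\lambda_{i-1})$. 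Since each step contracts an $\omega_{i}$-negative extremal ray, Theorem \ref{thm--nonnef-negativecurve} places the contracted curves over $W$ inside ${\rm NNef}(\omega_{i}/Z)$; as the non-nef locus only shrinks along an $\omega$-MMP and is disjoint from the transform of ${\rm Nqklt}(X,\omega)$, the non-biholomorphic locus of the MMP stays disjoint from ${\rm Nqklt}(X,\omega)$, hence from ${\rm Nklt}(X,\Delta)$. In particular every $X_{i}$ is biholomorphic to $X$ over a neighbourhood of ${\rm Nklt}(X,\Delta)$, so the semi-ampleness over a neighbourhood of $W$ of $(K_{X}+\Delta+A)|_{{\rm Nklt}(X,\Delta)}$, and hence of its restriction to ${\rm Nqlc}(X_{i},\omega_{i})$, is inherited on every model.

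Next I would show the scaled MMP reaches a model on which the transform of $K_{X}+\Delta+A$ is semi-ample over a neighbourhood of $W$. Applying Theorem \ref{thm--abundance-quasi-log} to $[X_{i},\omega_{i}]$ with the ample divisor $\lambda_{i}H_{i}$ (or a tiny ample perturbation when $\lambda_{i}=0$): the class $\omega_{i}+\lambda_{i}H_{i}$ is nef over $W$ by definition of the threshold, and its restriction to ${\rm Nqlc}(X_{i},\omega_{i})$ is semi-ample over a neighbourhood of $W$, being the restriction of a semi-ample class on ${\rm Nklt}(X,\Delta)$ plus an ample one; hence $\omega_{i}+\lambda_{i}H_{i}$ is semi-ample over a neighbourhood of $W$ for every $i$. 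As in \cite[Section~4]{has-mmp-normal-pair}, this semi-ampleness at the threshold forces the tail of the MMP to take place over the base of the associated fibration, where $\omega$ becomes relatively numerically trivial, and the resulting relative MMP terminates by the analytic minimal model theory for klt and lc pairs (\cite{fujino-analytic-bchm}, \cite{eh-analytic-mmp}, \cite{fujino-analytic-lcabundance}); so $\lambda_{m}=0$ for some $m$, $\omega_{m}$ is nef over $W$, and again by Theorem \ref{thm--abundance-quasi-log} it is semi-ample over a neighbourhood of $W$. Finally, from ${\rm Bs}|\omega_{m}/Z|_{\mathbb{R}}\cap\pi^{-1}(W)=\emptyset$, together with the fact that the MMP is an isomorphism over a neighbourhood of ${\rm Nklt}(X,\Delta)$ and identifies the relevant relative $\mathbb{R}$-linear systems away from its non-biholomorphic locus, I would conclude ${\rm Bs}|K_{X}+\Delta+A/Z|_{\mathbb{R}}\cap{\rm Nklt}(X,\Delta)\cap\pi^{-1}(W)=\emptyset$.

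The hard part will be the termination input: arranging that the scaled MMP reaches, after finitely many steps and possibly a further shrinking of $Z$, a model over which $K_{X}+\Delta+A$ is semi-ample over a neighbourhood of $W$. Unlike the algebraic case, one cannot simply glue local minimal model programs, so this must be organized through the machinery of Section \ref{sec--running-mmp} (Lemma \ref{lem--mmp-normal-pair-run}, Theorems \ref{thm--mmpstep-quasi-log} and \ref{thm--mmp-nefthreshold-strict}) together with abundance for quasi-log complex analytic spaces (Theorem \ref{thm--abundance-quasi-log}), while keeping all shrinkings of $Z$ compatible and ensuring the non-biholomorphic locus never meets ${\rm Nklt}(X,\Delta)$; the passage from $\mathbb{Q}$ to $\mathbb{R}$ coefficients in the canonical bundle formula step is a further technical point to be dispatched.
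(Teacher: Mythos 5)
There is a genuine gap, and it sits exactly where you flag "the hard part": the termination of the scaled MMP. Your endgame is that the $(\omega+A)$-MMP with scaling reaches a stage with $\lambda_{m}=0$ and a semi-ample transform, justified only by an appeal to "the tail taking place over the base of the associated fibration" and to analytic MMP for klt/lc pairs. In this paper the only termination statements available at this point are Theorem \ref{thm--from-minimodel-to-termi}, whose hypothesis is precisely ${\rm Bs}|K_{X}+\Delta+A/Z|_{\mathbb{R}}\cap{\rm Nklt}(X,\Delta)\cap\pi^{-1}(W)=\emptyset$ --- the very conclusion you are proving --- and Theorem \ref{thm--klt-minmodeltheory-main}, which is deduced \emph{from} the statement in question; so, as structured, your argument is circular. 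The paper avoids this by following \cite[Propositions 4.8, 4.9, Theorem 4.10]{has-mmp-normal-pair}: the key ingredient that breaks the circle is the vanishing theorem for quasi-log spaces induced by a normal pair (Theorem \ref{thm--vanishing-quasi-log}), which is used to lift elements of the relative $\mathbb{R}$-linear system from ${\rm Nklt}(X,\Delta)$ (where semi-ampleness is assumed) to $X$ near $W$, with Theorem \ref{thm--from-minimodel-to-termi} entering only in an auxiliary, non-circular way. Your proposal never uses Theorem \ref{thm--vanishing-quasi-log}, and applying Theorem \ref{thm--abundance-quasi-log} at each threshold cannot substitute for it: it gives semi-ampleness of $\omega_{i}+\lambda_{i}H_{i}$ only while $\lambda_{i}>0$ and says nothing at the limit, so it neither produces the finite stage $m$ nor the base-point-freeness of the unperturbed class.

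A secondary but real issue is the claim that the non-biholomorphic locus of the MMP avoids all of ${\rm Nklt}(X,\Delta)$, not merely the non-lc locus. The constructions available (Theorem \ref{thm--mmpstep-quasi-log}, Theorem \ref{thm--mmp-nefthreshold-strict}, Corollary \ref{cor--mmpwithscaling-normalpair}) only keep the MMP biholomorphic near ${\rm Nqlc}$/${\rm Nlc}$. Your argument via Theorem \ref{thm--nonnef-negativecurve} does place the contracted curves over $W$ inside ${\rm NNef}(\omega_{i}/Z)$ at a given step, but the persistence of ${\rm NNef}(\omega_{i}/Z)\cap{\rm Nklt}_{i}\cap\pi^{-1}(W)=\emptyset$ after a flip ("the non-nef locus only shrinks along an $\omega$-MMP") is asserted without proof, and the flipped locus on $X_{i+1}$ also has to be controlled. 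Finally, Lemma \ref{lem--can-bundle-formula} is misquoted: it requires $\mathbb{Q}$-coefficients and outputs a normal pair $(X,G)$ with ${\rm Nklt}(X,G)$ supported on ${\rm Nqklt}(X,\omega)$, not a quasi-log structure with prescribed non-qklt locus; in the paper it is used in Lemma \ref{lem--mmp-fibration-kltlocus} to reduce a quasi-log statement \emph{to} the present theorem, so invoking it here is both unnecessary and logically backwards.
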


\begin{proof}
The proofs of \cite[Propositions 4.8, 4.9, Theorem 4.10]{has-mmp-normal-pair} work in our situation because we may use Theorem \ref{thm--from-minimodel-to-termi} and Theorem \ref{thm--vanishing-quasi-log}. 
\end{proof}

\begin{thm}[cf.~{\cite[Theorem 4.11]{has-mmp-normal-pair}}]\label{thm--klt-minmodeltheory-main}
Let $\pi \colon X \to Z$ be a projective morphism from a normal analytic variety $X$ to a Stein space $Z$, and let $W \subset Z$ be a compact subset such that $\pi$ and $W$ satisfy (P). 
Let $(X,\Delta)$ be a normal pair and $A$ a $\pi$-ample $\mathbb{R}$-divisor on $X$ such that $K_{X}+\Delta+A$ is globally $\mathbb{R}$-Cartier and $\pi$-pseudo-effective. 
Suppose that ${\rm NNef}(K_{X}+\Delta+A/Z) \cap {\rm Nklt}(X,\Delta) \cap \pi^{-1}(W)  = \emptyset$. 
Suppose in addition that $(K_{X}+\Delta+A)|_{{\rm Nklt}(X,\Delta)}$, which we think of an $\mathbb{R}$-line bundle on ${\rm Nklt}(X,\Delta)$, is semi-ample over a neighborhood of $W$. 
Let 
$$(X_{1},\Delta_{1}+A_{1}) \dashrightarrow \cdots \dashrightarrow (X_{i},\Delta_{i}+A_{i}) \dashrightarrow \cdots$$
be a sequence of steps of a $(K_{X}+\Delta+A)$-MMP over $Z$ around $W$ with scaling of $A$.  We put 
$$\lambda_{i}:=\{\mu \in \mathbb{R}_{\geq 0}\,| \,\text{$(K_{X_{i}}+\Delta_{i}+A_{i})+\mu A_{i}$ is nef over $W$} \}$$ for each $i \geq 1$. 
Then 
${\rm lim}_{i \to \infty} \lambda_{i}=\lambda_{m}$ for some $m$. 
In particular, if ${\rm lim}_{i \to \infty} \lambda_{i}=0$ then the $(K_{X}+\Delta+A)$-MMP terminates with a good minimal model over $Z$. 
\end{thm}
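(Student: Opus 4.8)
The plan is to follow the strategy of \cite[Proof of Theorem 4.11]{has-mmp-normal-pair}, which I expect to transfer to the analytic setting because all the ingredients it uses are now available: Theorem \ref{thm--non-vanishing-kltmmp-main} (non-vanishing along the non-klt locus), Theorem \ref{thm--from-nonvanish-to-minmodel} (existence of a $\mathbb{Q}$-factorial good minimal model once the stable base locus misses the non-klt locus), Theorem \ref{thm--from-minimodel-to-termi} (termination of a scaling MMP with $\lim\lambda_i=0$ provided a good minimal model exists), and the cone/contraction machinery for quasi-log complex analytic spaces. First I would apply Theorem \ref{thm--non-vanishing-kltmmp-main} to the hypothesis ${\rm NNef}(K_X+\Delta+A/Z)\cap{\rm Nklt}(X,\Delta)\cap\pi^{-1}(W)=\emptyset$ (together with the semi-ampleness of the restriction to ${\rm Nklt}(X,\Delta)$) to deduce ${\rm Bs}|K_X+\Delta+A/Z|_{\mathbb{R}}\cap{\rm Nklt}(X,\Delta)\cap\pi^{-1}(W)=\emptyset$. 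After shrinking $Z$ around $W$ and using Remark \ref{rem--stable-base-locus-restriction} we may assume this holds over all of $Z$.

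Next, writing $B:=\Delta+A$ (so $A$ is an effective $\pi$-ample $\mathbb{R}$-divisor in the decomposition) and noting ${\rm Nklt}(X,B)\subset{\rm Nklt}(X,\Delta)$, Theorem \ref{thm--from-nonvanish-to-minmodel} produces, after shrinking $Z$, a $\mathbb{Q}$-factorial good minimal model of $(X,\Delta+A)$ over $Z$ around $W$ — here one either reduces to the case ${\rm Nklt}(X,\Delta)\neq\emptyset$ directly, or, in the case ${\rm Nklt}(X,\Delta)=\emptyset$, invokes instead the analytic analog of the klt statement from \cite{fujino-analytic-bchm}, so that in all cases $(X,\Delta+A)$ has a good minimal model over $Z$ around $W$. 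With a good minimal model in hand, I would then feed the given $(K_X+\Delta+A)$-MMP with scaling of $A$ into Theorem \ref{thm--from-minimodel-to-termi}: that theorem tells us that \emph{if} $\lim_{i\to\infty}\lambda_i=0$, then $\lambda_m=0$ for some $m$ and $(X_m,\Delta_m+A_m)$ is a good minimal model, which already settles the ``in particular'' clause.

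It remains to handle the general claim $\lim_{i\to\infty}\lambda_i=\lambda_m$ for some $m$, i.e.\ to rule out the possibility that $\lambda_i$ strictly decreases to a \emph{positive} limit $\lambda_\infty>0$. Here I would argue as in \cite{has-mmp-normal-pair}: set $\lambda_\infty:=\lim_i\lambda_i$ and suppose $\lambda_\infty>0$. For $t$ slightly larger than $\lambda_\infty$, the pair $(X,\Delta+A+tA)$ — up to rescaling $A$ — still satisfies the hypotheses (the non-nef locus and the non-klt locus are unchanged, and the restriction to ${\rm Nklt}$ stays semi-ample), and the tail of the given MMP becomes a $(K_X+\Delta+(1+t)A)$-MMP with scaling; since $K_X+\Delta+(1+\lambda_\infty)A$-type divisors are nef on each $X_i$ for $i$ large, one runs the termination argument of Theorem \ref{thm--from-minimodel-to-termi} (applied to the perturbed pair, which has a good minimal model by Theorem \ref{thm--from-nonvanish-to-minmodel}) to conclude $\lambda_i$ stabilizes at $\lambda_\infty$ for $i\gg0$, contradicting strict decrease. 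Then also at the step where $\lambda_i=\lambda_\infty$ for the first time we are done.

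\emph{Main obstacle.} The genuinely delicate point is the perturbation argument in the last paragraph: one must ensure that replacing $A$ by $(1+t)A$ (or $\Delta+A$ by $\Delta+A+tA$) for $t$ near $\lambda_\infty$ preserves \emph{all} hypotheses — in particular that the relevant restriction to ${\rm Nklt}$ remains semi-ample over a neighborhood of $W$ and that a good minimal model still exists — and that the cone theorem for the associated quasi-log structure still produces extremal rays with the length estimate needed to recognize that no further $(K_X+\Delta+A)$-negative ray with $\lambda_i<\lambda_\infty$ can exist. In the analytic setting this requires care with the repeated shrinking of $Z$ around $W$ and with the compactness of $W$ (so that finitely many shrinkings suffice, exactly as in the proof of Lemma \ref{lem--mmp-normal-pair-run}); all the needed inputs are available from \cite{fujino-quasi-log-analytic}, \cite{eh-analytic-mmp}, and \cite{fujino-analytic-lcabundance}, so no new technical result is needed beyond bookkeeping.
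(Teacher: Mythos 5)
Your overall architecture --- combine Theorem \ref{thm--non-vanishing-kltmmp-main} with Theorem \ref{thm--from-minimodel-to-termi} after rescaling the ample part by the limit of the thresholds --- is exactly the paper's (its proof is a one-line citation of those two theorems), and your treatment of the ``in particular'' clause and of the degenerate case ${\rm Nklt}(X,\Delta)=\emptyset$ is fine; the extra invocation of Theorem \ref{thm--from-nonvanish-to-minmodel} is redundant but harmless.

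However, the key step --- ruling out a non-attained limit $\lambda_\infty$ --- fails as you wrote it. You perturb with ``$t$ slightly larger than $\lambda_\infty$'' and claim the tail of the given MMP becomes a $(K_X+\Delta+(1+t)A)$-MMP with scaling. It does not: a contracted ray $R_i$ satisfies $(K_{X_i}+\Delta_i+A_i+\lambda_iA_i)\cdot R_i=0$ and $A_i\cdot R_i>0$, hence $(K_{X_i}+\Delta_i+A_i+tA_i)\cdot R_i=(t-\lambda_i)\,A_i\cdot R_i>0$ as soon as $\lambda_i<t$, which holds for all large $i$ because $\lambda_i\to\lambda_\infty<t$; so the late steps are \emph{positive} for the perturbed divisor and Theorem \ref{thm--from-minimodel-to-termi} cannot be applied to it. Likewise your supporting claim that ``$K_X+\Delta+(1+\lambda_\infty)A$-type divisors are nef on each $X_i$ for $i$ large'' is false when the limit is not attained: by the definition of $\lambda_i$ as a nef threshold, $K_{X_i}+\Delta_i+A_i+\lambda_\infty A_i$ is not nef over $W$ whenever $\lambda_i>\lambda_\infty$. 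The correct perturbation is by exactly $\lambda_\infty$: set $A':=(1+\lambda_\infty)A$. Then the whole sequence is a $(K_X+\Delta+A')$-MMP over $Z$ around $W$ with scaling of $A'$ whose thresholds $(\lambda_i-\lambda_\infty)/(1+\lambda_\infty)$ tend to $0$; the hypotheses are preserved, since ${\rm NNef}(K_X+\Delta+A'/Z)\subset{\rm NNef}(K_X+\Delta+A/Z)$ (adding the ample divisor $\lambda_\infty A$ can only decrease the asymptotic vanishing orders) and $(K_X+\Delta+A')|_{{\rm Nklt}(X,\Delta)}$ is still semi-ample near $W$ (semi-ample plus ample). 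Applying Theorem \ref{thm--non-vanishing-kltmmp-main} with $A'$ and then Theorem \ref{thm--from-minimodel-to-termi} gives $\lambda_m=\lambda_\infty$ for some $m$, which is precisely the assertion (no contradiction argument is needed). With this one-parameter correction your proof coincides with the paper's.
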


\begin{proof}
The theorem follows from Theorem \ref{thm--non-vanishing-kltmmp-main} and Theorem \ref{thm--from-minimodel-to-termi}. 
\end{proof}

\subsection{Proof of main result}

The goal of this subsection is to prove Theorem \ref{thm--termination-lcmmp-main}.

\begin{lem}[cf.~{\cite[Lemma 5.1]{has-mmp-normal-pair}}]\label{lem--mmp-fibration-kltlocus}
Let $f\colon (Y,\Delta) \to [X,\omega]$ be a quasi-log complex analytic space induced by a normal pair. 
Let $\pi \colon X \to Z$ be a projective morphism to a Stein space $Z$, and let $W \subset Z$ be a compact subset such that $\pi$ and $W$ satisfy (P). 
Let $A$ be a $\pi$-ample $\mathbb{R}$-divisor on $X$ such that $\omega+A$ is $\pi$-pseudo-effective. 
Suppose that ${\rm NNef}(\omega+A/Z) \cap{\rm Nqklt}(X,\omega) \cap \pi^{-1}(W) = \emptyset$. 
Suppose in addition that $(\omega+A)|_{{\rm Nqklt}(X,\omega)}$, which we think of an $\mathbb{R}$-line bundle on ${\rm Nqklt}(X,\omega)$, is semi-ample over a neighborhood of $W$. 
Let
$$X_{1} \dashrightarrow \cdots \dashrightarrow X_{i} \dashrightarrow \cdots$$
be a sequence of steps of an $(\omega+A)$-MMP over $Z$ around $W$ with scaling of $A$ such that if we put $$\lambda_{i}:=\{\mu \in \mathbb{R}_{\geq 0}\,| \,\text{$\omega_{i}+A_{i}+\mu A_{i}$ is nef over $W$} \}$$ for each $i \geq 1$, then 
${\rm lim}_{i \to \infty} \lambda_{i}=0$.  
Then $\lambda_{m}=0$ for some $m$. 
\end{lem}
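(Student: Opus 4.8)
The plan is to reduce the assertion to the termination result for normal pairs along the non-klt locus already recorded as Theorem~\ref{thm--klt-minmodeltheory-main}, by pushing the quasi-log structure on $X$ down to a genuine normal pair structure via the canonical bundle formula. As a preliminary remark (used only to fix the geometry), note that by Theorem~\ref{thm--nonnef-negativecurve} and the hypothesis ${\rm NNef}(\omega+A/Z)\cap{\rm Nqklt}(X,\omega)\cap\pi^{-1}(W)=\emptyset$, every curve contracted by a step of the given $(\omega+A)$-MMP lies in ${\rm NNef}(\omega+A/Z)$, so after shrinking $Z$ around $W$ each step is a biholomorphism on a fixed neighborhood of ${\rm Nqklt}(X,\omega)\cap\pi^{-1}(W)$; in particular the qlc structure and the semi-ampleness of $(\omega+A)|_{{\rm Nqklt}(X,\omega)}$ over a neighborhood of $W$ persist along the MMP. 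The first real step is to reduce to the case where $\omega$ is $\mathbb{Q}$-Cartier and $\Delta$ is a $\mathbb{Q}$-divisor, so that Lemma~\ref{lem--can-bundle-formula} applies: this is a perturbation of the coefficients of $\Delta$ (hence of $\omega$), carried out with the cone and contraction theorem together with the length estimate for extremal curves of quasi-log complex analytic spaces (\cite[Section~9]{fujino-quasi-log-analytic}), arranged so that the prescribed sequence of extremal contractions, the scaling by $A$, and the condition $\lim_{i\to\infty}\lambda_i=0$ all survive.

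Next I would apply Lemma~\ref{lem--can-bundle-formula} to $f\colon(Y,\Delta)\to[X,\omega]$ with the $\pi$-ample divisor $\tfrac12 A$: after shrinking $Z$ around $W$ it produces a normal pair $(X,G)$ with $K_X+G\sim_{\mathbb{R},\,Z}\omega+\tfrac12 A$ such that ${\rm Nklt}(X,G)$ is a closed complex analytic subspace of ${\rm Nqklt}(X,\omega)$ with the same support. Hence $K_X+G+\tfrac12 A\sim_{\mathbb{R},\,Z}\omega+A$. Since the non-nef locus over $Z$ depends only on the $\sim_{\mathbb{R},\,Z}$-class and since ${\rm Supp}\,{\rm Nklt}(X,G)={\rm Supp}\,{\rm Nqklt}(X,\omega)$, the hypotheses of the lemma translate into
$${\rm NNef}(K_X+G+\tfrac12 A/Z)\cap{\rm Nklt}(X,G)\cap\pi^{-1}(W)=\emptyset,$$
and restricting the semi-ample $\mathbb{R}$-line bundle $(\omega+A)|_{{\rm Nqklt}(X,\omega)}$ along the closed immersion ${\rm Nklt}(X,G)\hookrightarrow{\rm Nqklt}(X,\omega)$ shows that $(K_X+G+\tfrac12 A)|_{{\rm Nklt}(X,G)}$ is semi-ample over a neighborhood of $W$.

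Then I would compare the two MMPs. Because $K_X+G+\tfrac12 A\sim_{\mathbb{R},\,Z}\omega+A$ and a divisor pulled back from $Z$ is numerically trivial over $W$, at every stage $i$ the strict transforms satisfy $(K_X+G+\tfrac12 A)_i\equiv_W\omega_i+A_i$; therefore the given $(\omega+A)$-MMP over $Z$ around $W$ with scaling of $A$ is, step by step, a sequence of steps of a $(K_X+G+\tfrac12 A)$-MMP over $Z$ around $W$ with scaling of $\tfrac12 A$ whose $i$-th scaling number equals $2\lambda_i$. Applying Theorem~\ref{thm--klt-minmodeltheory-main} to the normal pair $(X,G)$ and the $\pi$-ample divisor $\tfrac12 A$, we obtain $2\lambda_m=\lim_{i\to\infty}2\lambda_i$ for some $m$; since $\lim_{i\to\infty}\lambda_i=0$ this forces $\lambda_m=0$, which is the conclusion.

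I expect the reduction to rational coefficients to be the main obstacle. Making it compatible with the \emph{given} MMP and with the hypothesis $\lim_{i\to\infty}\lambda_i=0$ requires the quasi-log cone theorem together with the bound on the lengths of extremal curves in the complex analytic setting, which is exactly one of the technical inputs that does not carry over from \cite{has-mmp-normal-pair} as transparently as the rest; a secondary, purely bookkeeping difficulty is keeping the successive ``shrink $Z$ around $W$'' operations (the coefficient perturbation, the canonical bundle formula, and the identification of the two MMPs) mutually consistent.
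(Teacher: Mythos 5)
Your overall skeleton (perturb to rational coefficients, use the canonical bundle formula Lemma \ref{lem--can-bundle-formula} to replace $[X,\omega]$ by a genuine normal pair, then quote Theorem \ref{thm--klt-minmodeltheory-main} and halve/double the scaling numbers) is the same as the paper's, and the second and third paragraphs of your argument are fine. The genuine gap is exactly the step you yourself flag as the main obstacle: the reduction to $\mathbb{Q}$-coefficients ``arranged so that the prescribed sequence of extremal contractions, the scaling by $A$, and the condition $\lim_i\lambda_i=0$ all survive.'' This cannot be achieved by a fixed perturbation of $\omega$. If $C_i$ spans the ray contracted at the $i$-th step, then $(\omega_i+A_i)\cdot C_i=-\lambda_i\,A_i\cdot C_i$, and since $\lambda_i\to 0$ these negativity margins have no positive lower bound (the length estimate from \cite[Section 9]{fujino-quasi-log-analytic} only bounds $-(\omega_i+A_i)\cdot C_i$ from above); a perturbation $\omega'-\omega$ that is not proportional to $A$ can therefore destroy the negativity of all but finitely many steps, and the thresholds for $\omega'+A$ need not tend to $0$. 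So the sequence you feed into Theorem \ref{thm--klt-minmodeltheory-main} is not known to be an MMP for the perturbed class, and the final comparison $(K_X+G+\tfrac12A)_i\equiv_W\omega_i+A_i$ only holds with the \emph{original} $\omega$, which Lemma \ref{lem--can-bundle-formula} does not reach directly because the original $\Delta,\omega$ need not be rational.

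The paper's proof avoids ever touching the given MMP. It applies \cite[Proof of Lemma 4.25]{fujino-morihyper} to produce $(Y,\Delta')\to[X,\omega']$ with $\mathbb{Q}$-coefficients such that ${\rm Nqlc}(X,\omega)={\rm Nqlc}(X,\omega')$, ${\rm Nqklt}(X,\omega)={\rm Nqklt}(X,\omega')$, and $\omega-\omega'+\tfrac12A$ is $\pi$-ample; then it applies Lemma \ref{lem--can-bundle-formula} to $(Y,\Delta')\to[X,\omega']$ with the ample divisor $\omega-\omega'+\tfrac12A$ (rather than $\tfrac12A$), obtaining $(X,\Gamma)$ with $K_X+\Gamma\sim_{\mathbb{R},Z}\omega'+(\omega-\omega'+\tfrac12A)=\omega+\tfrac12A$. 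Because the perturbation is re-absorbed into the auxiliary ample divisor, the relevant $\mathbb{R}$-linear equivalence class over $Z$ is the original $\omega+A$, the given sequence of steps is verbatim a $(K_X+\Gamma+\tfrac12A)$-MMP with scaling of $\tfrac12A$, and no compatibility of the MMP with the perturbation is ever required; Theorem \ref{thm--klt-minmodeltheory-main} then gives $\lambda_m=0$. This is the missing idea in your proposal. (Your preliminary remark via Theorem \ref{thm--nonnef-negativecurve} about the steps being biholomorphic near ${\rm Nqklt}(X,\omega)$ is not needed for this reduction, and as stated it also requires justification beyond the first step, since at later stages one must control the non-nef locus of the transformed divisors rather than of $\omega+A$ on $X$.)
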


\begin{proof}
By applying \cite[Proof of Lemma 4.25]{fujino-morihyper} to $(Y,\Delta) \to [X,\omega]\to Z$ and shrinking $Z$ around $W$, we get a quasi-log complex analytic space induced by a normal pair $(Y,\Delta') \to [X,\omega']$ such that
\begin{itemize}
\item
$\Delta'$ and $\omega'$ are $\mathbb{Q}$-divisors, 
\item
${\rm Nqlc}(X,\omega)={\rm Nqlc}(X,\omega')$ and ${\rm Nqklt}(X,\omega)={\rm Nqklt}(X,\omega')$ as closed complex analytic subspaces of $X$, and 
\item
$\omega-\omega'+\frac{1}{2}A$ is $\pi$-ample.  
\end{itemize}
We apply Lemma \ref{lem--can-bundle-formula} to $(Y,\Delta') \to [X,\omega']$ and $\omega-\omega'+\frac{1}{2}A$. 
We get a normal pair $(X,\Gamma)$ such that 
$K_{X}+\Gamma \sim_{\mathbb{R},\,Z} \omega'+(\omega-\omega'+\frac{1}{2}A)=\omega+\frac{1}{2}A$ and ${\rm Nklt}(X,\Gamma)$ is a closed complex analytic subspace of ${\rm Nqklt}(X,\omega')$. 
We consider $(X,\Gamma)$ and $\frac{1}{2}A$. 
Since 
$$K_{X}+\Gamma+\frac{1}{2}A \sim_{\mathbb{R},\,Z}\omega+A,$$
we have ${\rm NNef}(K_{X}+\Gamma+\frac{1}{2}A/Z) \cap {\rm Nklt}(X,\Gamma)  = \emptyset$. 
Since $(\omega+A)|_{{\rm Nqklt}(X,\omega)}$ is semi-ample over a neighborhood of $W$ and ${\rm Nqklt}(X,\omega)={\rm Nqklt}(X,\omega')$, it follows that the $\mathbb{R}$-line bundle $(K_{X}+\Gamma+\frac{1}{2}A)|_{{\rm Nklt}(X,\Gamma)}$ is semi-ample over a neighborhood of $W$. 
We may regard 
$X_{1} \dashrightarrow \cdots \dashrightarrow X_{i} \dashrightarrow \cdots$ as a sequence of steps of a $(K_{X}+\Gamma+\frac{1}{2}A)$-MMP over $Z$ around  $W$ with scaling of $\frac{1}{2}A$. 
We apply Theorem \ref{thm--klt-minmodeltheory-main} to $X \to Z$, $(X,\Gamma)$, and $\frac{1}{2}A$. 
Then we see that $\lambda_{m}=0$ for some $m$. 
\end{proof}

\begin{thm}[cf.~{\cite[Theorem 5.2]{has-mmp-normal-pair}}]\label{thm--mmp-lclocus-main}
Let $f\colon (Y,\Delta) \to [X,\omega]$ be a quasi-log complex analytic space induced by a normal pair. 
Let $\pi \colon X \to Z$ be a  projective morphism to a Stein space $Z$, and let $W \subset Z$ be a compact subset such that $\pi$ and $W$ satisfy (P). 
Let $A$ be a $\pi$-ample $\mathbb{R}$-divisor on $X$ such that $\omega+A$ is $\pi$-pseudo-effective. 
Suppose that ${\rm NNef}(\omega+A/Z) \cap{\rm Nqlc}(X,\omega) \cap \pi^{-1}(W)  = \emptyset$. 
Suppose in addition that $(\omega+A)|_{{\rm Nqlc}(X,\omega)}$, which we think of an $\mathbb{R}$-line bundle on ${\rm Nqlc}(X,\omega)$, is semi-ample over a neighborhood of $W$. 
Then, after shrinking $Z$ around $W$, there is a sequence of steps of an $(\omega+A)$-MMP over $Z$ around $W$ with scaling of $A$ 
$$X_{1} \dashrightarrow  X_{2} \dashrightarrow \cdots \dashrightarrow X_{n},$$
represented by bimeromorphic contraction such that $\omega_{n}+A_{n}$ is semi-ample over $Z$, where $\omega_{n}$ and $A_{n}$ are the strict transforms of $\omega$ and $A$ on $X_{n}$ respectively.
\end{thm}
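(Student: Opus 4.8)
The plan is to run an $(\omega+A)$-MMP over $Z$ around $W$ with scaling of $A$ by means of Theorem \ref{thm--mmp-nefthreshold-strict}, to prove that this program terminates, and then to upgrade the resulting nefness to semi-ampleness via the abundance theorem for quasi-log complex analytic spaces (Theorem \ref{thm--abundance-quasi-log}); the scheme follows \cite[Proof of Theorem 5.2]{has-mmp-normal-pair}, the complex-analytic subtleties being already absorbed in Theorem \ref{thm--mmpstep-quasi-log}, Lemma \ref{lem--mmp-fibration-kltlocus} and Theorem \ref{thm--abundance-quasi-log}. To set up Theorem \ref{thm--mmp-nefthreshold-strict}: since $\omega+A$ is $\pi$-pseudo-effective and $A$ is $\pi$-ample, the relative Kleiman criterion over the compact set $W$ gives $\lambda_{0}>0$ with $(\omega+A)+\lambda_{0}A$ ample over a neighborhood of $W$; and after shrinking $Z$ around $W$ so that $A$ is globally $\mathbb{R}$-Cartier and replacing $A$ by a general member of $|A/Z|_{\mathbb{R}}$ (which is $\sim_{\mathbb{R}}A$ as $Z$ is Stein), the normal pair $(Y,\Delta+f^{*}A)$ presents $[X,\omega+A]$ as a quasi-log complex analytic space induced by a normal pair with ${\rm Nqlc}(X,\omega+A)={\rm Nqlc}(X,\omega)$ and ${\rm Nqklt}(X,\omega+A)={\rm Nqklt}(X,\omega)$ as sets, so that ${\rm NNef}(\omega+A/Z)\cap{\rm Nqlc}(X,\omega+A)\cap\pi^{-1}(W)=\emptyset$. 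Applying Theorem \ref{thm--mmp-nefthreshold-strict} to $f\colon(Y,\Delta+f^{*}A)\to[X,\omega+A]$ with scaling divisor $A$ then produces, after further shrinking $Z$ around $W$, a sequence of steps of an $(\omega+A)$-MMP over $Z$ around $W$ with scaling of $A$ that is biholomorphic near ${\rm Nqlc}(X,\omega)$, carries compatible quasi-log structures $[X_{i},\omega_{i}]$ on the strict transforms, and has strictly decreasing nef thresholds $\lambda_{1}>\lambda_{2}>\cdots$.

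Next I would show this program terminates, equivalently that $\lambda:=\lim_{i\to\infty}\lambda_{i}=0$. Suppose $\lambda>0$. Each contracted extremal ray $R_{i}$ satisfies $(\omega_{i}+A_{i}+\lambda_{i}A_{i})\cdot R_{i}=0$ and $A_{i}\cdot R_{i}>0$, hence $(\omega_{i}+A_{i}+\lambda A_{i})\cdot R_{i}<0$; therefore the very same sequence of steps is also a sequence of steps of an $((\omega+A)+\lambda A)$-MMP over $Z$ around $W$ with scaling of the $\pi$-ample divisor $\lambda A$, whose nef thresholds are positive but tend to $0$, while $(\omega+A)+\lambda A$ is $\pi$-pseudo-effective. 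Applying Lemma \ref{lem--mmp-fibration-kltlocus} to $[X,\omega+A]$, the divisor $\lambda A$ and this program would force one of those thresholds to vanish, a contradiction, so $\lambda=0$. To invoke Lemma \ref{lem--mmp-fibration-kltlocus} one must promote the hypotheses from the non-qlc to the non-qklt stratum, namely that ${\rm NNef}((\omega+A)+\lambda A/Z)\cap{\rm Nqklt}(X,\omega+A)\cap\pi^{-1}(W)=\emptyset$ and that $((\omega+A)+\lambda A)|_{{\rm Nqklt}(X,\omega+A)}$ is semi-ample over a neighborhood of $W$. Away from the qlc centers this is immediate, since adding the $\pi$-ample $\lambda A$ can only shrink the non-nef locus; on the qlc centers I would proceed by induction on $\dim{\rm Nqklt}(X,\omega)$, restricting $\omega+A$ to a qlc center $C$ of $[X,\omega]$ not contained in ${\rm Nqlc}(X,\omega)$ via adjunction for quasi-log complex analytic spaces (\cite{fujino-quasi-log-analytic}) and applying the theorem to the lower-dimensional $[C,(\omega+A)|_{C}]$, thereby enlarging the locus on which $\omega+A$ is known to be semi-ample from ${\rm Nqlc}(X,\omega)$ to ${\rm Nqklt}(X,\omega)$. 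This stratification bookkeeping — synchronizing the non-lc and non-klt data of the quasi-log structure with the hypotheses demanded by Lemma \ref{lem--mmp-fibration-kltlocus} and, behind it, Theorem \ref{thm--non-vanishing-kltmmp-main} — is the step I expect to be the main obstacle.

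Granting $\lambda=0$, Lemma \ref{lem--mmp-fibration-kltlocus} applied to the original program gives $\lambda_{m}=0$ for some $m$; since the thresholds strictly decrease the program stops at $X_{m}$, with $\omega_{m}+A_{m}$ nef over $W$. Because the program is biholomorphic near ${\rm Nqlc}(X,\omega)$, the restriction $(\omega_{m}+A_{m})|_{{\rm Nqlc}(X_{m},\omega_{m})}$ is still semi-ample over a neighborhood of $W$, so Theorem \ref{thm--abundance-quasi-log} — applied after writing $\omega_{m}+A_{m}$ as the sum of a quasi-log divisor and a small $\pi$-ample divisor on $X_{m}$, as in the algebraic argument — shows $\omega_{m}+A_{m}$ is semi-ample over a neighborhood of $W$; shrinking $Z$ to that neighborhood makes it semi-ample over $Z$. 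The finite sequence $X_{1}\dashrightarrow\cdots\dashrightarrow X_{m}$ (with $n=m$) is then the desired $(\omega+A)$-MMP, and, being finite, it is automatically represented by bimeromorphic contractions.
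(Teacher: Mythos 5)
Your outer skeleton (run the MMP via Theorem \ref{thm--mmp-nefthreshold-strict}, terminate, then upgrade nefness to semi-ampleness by Theorem \ref{thm--abundance-quasi-log}) matches the paper, and the setup in your first paragraph is essentially fine. The genuine gap is exactly at the step you flag as the main obstacle: your plan is to verify the hypotheses of Lemma \ref{lem--mmp-fibration-kltlocus} for the original $[X,\omega+A]$, i.e.\ to ``promote'' the assumptions from ${\rm Nqlc}(X,\omega)$ to ${\rm Nqklt}(X,\omega)$. That intermediate statement is false in general, so no argument can establish it: already in the extreme case where $(Y,\Delta)=(X,\Delta)$ is lc (so ${\rm Nqlc}=\emptyset$ and the hypotheses of Theorem \ref{thm--mmp-lclocus-main} are essentially vacuous), lc centers may very well lie inside ${\rm NNef}(K_X+\Delta+A/Z)$ and $(K_X+\Delta+A)|_{{\rm Nklt}(X,\Delta)}$ need not be semi-ample --- the MMP is precisely expected to flip and contract along such centers --- yet the theorem still holds there. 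Moreover, your proposed mechanism for the promotion does not typecheck: the adjunction structure on a qlc center $C$ (or on ${\rm Nqklt}$) is a quasi-log complex analytic space, but not a quasi-log space \emph{induced by a normal pair} ($C$ need not even be normal), so neither Theorem \ref{thm--mmp-lclocus-main} nor Lemma \ref{lem--mmp-fibration-kltlocus} nor Theorem \ref{thm--mmp-nefthreshold-strict} can be applied to $[C,(\omega+A)|_{C}]$ by induction; and even if they could, their conclusion is the existence of a good model \emph{after an MMP on $C$}, which does not yield semi-ampleness of $(\omega+A)|_{C}$ on $C$ itself, which is what you would need to feed into Lemma \ref{lem--mmp-fibration-kltlocus}.

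What the paper does instead (following the algebraic proof of \cite[Theorem 5.2]{has-mmp-normal-pair}) is to treat the qlc strata by the \emph{special termination} of the MMP: using adjunction for quasi-log structures and the base point free theorem (Theorem \ref{thm--abundance-quasi-log}), one shows that after finitely many steps the non-biholomorphic locus of the MMP with scaling no longer meets the non-qklt locus, and only after this truncation does one reduce to the klt-locus termination results (Lemma \ref{lem--mmp-fibration-kltlocus} and the results of Section \ref{sec--mmp-klt}); at no point does one assert that ${\rm NNef}(\omega+A/Z)$ avoids ${\rm Nqklt}$ or that $(\omega+A)|_{{\rm Nqklt}}$ is semi-ample on the original space. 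So your proof is incomplete where it matters: without the special termination input, the contradiction argument against $\lambda>0$ has no valid source for the hypotheses of Lemma \ref{lem--mmp-fibration-kltlocus}. (Two smaller points: Theorem \ref{thm--mmp-nefthreshold-strict} does not literally state that the MMP is biholomorphic near ${\rm Nqlc}$, though this follows from its construction via Theorem \ref{thm--mmpstep-quasi-log}; and a finite sequence is not ``automatically'' represented by bimeromorphic contractions --- one shrinks $Z$ around $W$ to the last base $Z_n$, which the statement permits.)
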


\begin{proof}
The argument of \cite[Proof of Theorem 5.2]{has-mmp-normal-pair} works with no changes. 
We note that we need the complex analytic analog of the special termination of MMP (\cite{fujino-sp-ter}), adjunction for quasi-log scheme (\cite[Theorem 6.1.2 (i)]{fujino-book}), and the base point free theorem for $\mathbb{R}$-Cartier divisors on quasi-log scheme (\cite[Theorem 2.23]{has-mmp-normal-pair}). 
These are known as given in \cite[Subsection 3.6]{eh-analytic-mmp}, \cite[Theorem 4.4]{fujino-quasi-log-analytic}, and Theorem \ref{thm--abundance-quasi-log}. 
\end{proof}

\begin{thm}[cf.~{\cite[Theorem 5.3]{has-mmp-normal-pair}}]\label{thm--termination-lcmmp-main}
Let $\pi \colon X \to Z$ be a projective morphism from a normal analytic variety $X$ to a Stein space $Z$, and let $W \subset Z$ be a compact subset such that $\pi$ and $W$ satisfy (P). 
Let $(X,\Delta)$ be a normal pair and $A$ a $\pi$-ample $\mathbb{R}$-divisor on $X$ such that $K_{X}+\Delta+A$ is globally $\mathbb{R}$-Cartier and $\pi$-pseudo-effective. 
Suppose that ${\rm NNef}(K_{X}+\Delta+A/Z) \cap {\rm Nlc}(X,\Delta) \cap \pi^{-1}(W)  = \emptyset$. 
Suppose in addition that $(K_{X}+\Delta+A)|_{{\rm Nlc}(X,\Delta)}$, which we think of an $\mathbb{R}$-line bundle on ${\rm Nlc}(X,\Delta)$, is semi-ample over a neighborhood of $W$. 
We put $(X_{1},B_{1}):=(X,\Delta+A)$. 
Let $H$ be a $\pi$-ample $\mathbb{R}$-divisor on $X$. 
Then, after shrinking $Z$ around $W$, there exists a sequence of steps of a $(K_{X_{1}}+B_{1})$-MMP over $Z$ around $W$ with scaling of $H$ 
$$
\xymatrix{
(X_{1},B_{1})\ar@{-->}[r]&\cdots \ar@{-->}[r]& (X_{i},B_{i})\ar@{-->}[rr]\ar[dr]&& (X_{i+1},B_{i+1})\ar[dl] \ar@{-->}[r]&\cdots\ar@{-->}[r]&(X_{m},B_{m}) \\
&&&V_{i}
}
$$
represented by bimeromorphic contractions, where $(X_{i},B_{i}) \to V_{i} \leftarrow (X_{i+1},B_{i+1})$ is a step of a $(K_{X_{i}}+B_{i})$-MMP over $Z$ around $W$, such that
\begin{itemize}
\item
the non-biholomorphic locus of the MMP is disjoint from ${\rm Nlc}(X,\Delta)$, 
\item
$\rho(X_{i}/Z; W) - \rho(V_{i}/Z; W)=1$ for every $\geq 1$, and
\item
$K_{X_{m}}+B_{m}$ is semi-ample over $Z$. 
\end{itemize}
Moreover, if $X$ is $\mathbb{Q}$-factorial over $W$, then all $X_{i}$ are also $\mathbb{Q}$-factorial over $W$. 
\end{thm}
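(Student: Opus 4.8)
The plan is to transport the statement into the quasi-log language, use Theorem~\ref{thm--mmp-lclocus-main} to produce a good minimal model, and then run the MMP with scaling of $H$ one $(K+B)$-negative extremal contraction at a time, so that the control on relative Picard numbers, on the non-lc locus, and on $\mathbb Q$-factoriality is delivered by Theorem~\ref{thm--mmpstep-quasi-log} and its constituent Lemma~\ref{lem--mmp-normal-pair-run}. After shrinking $Z$ around $W$ I would first arrange that every divisor in sight is globally $\mathbb R$-Cartier and, replacing $A$ and $H$ by general effective $\mathbb R$-divisors in their $\mathbb R$-linear equivalence classes with general support and small coefficients (which affects none of the hypotheses, since the non-nef locus over $Z$, the restriction to ${\rm Nlc}$, and $\pi$-pseudo-effectivity are invariants of the $\mathbb R$-linear equivalence class, and which is legitimate because $A$ and $H$ are $\pi$-ample over the Stein base), that $(X_{1},B_{1})=(X,\Delta+A)$ is a genuine normal pair with ${\rm Nlc}(X,\Delta+A)={\rm Nlc}(X,\Delta)$, that the strict transforms of $H$ stay effective along the MMP, and that $K_{X_{1}}+B_{1}+\lambda_{0}H$ is $\pi$-ample for some rational $\lambda_{0}>0$.

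Identifying $(X,\Delta)$ with the quasi-log complex analytic space $[X,\omega]$, $\omega:=K_{X}+\Delta$, through the structure morphism $\mathrm{id}_{X}$ (Definition~\ref{defn--lc-trivial-fib-quasi-log}), we get ${\rm Nqlc}(X,\omega)={\rm Nlc}(X,\Delta)$, ${\rm Nqklt}(X,\omega)={\rm Nklt}(X,\Delta)$, and $\omega+A=K_{X_{1}}+B_{1}$, so the hypotheses are exactly those of Theorem~\ref{thm--mmp-lclocus-main} for the data $(\mathrm{id}_{X},A)$; that theorem then provides, after shrinking $Z$, a good minimal model of $(X_{1},B_{1})$ over $Z$ around $W$. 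Separately I would run a $(K_{X_{1}}+B_{1})$-MMP over $Z$ around $W$ with scaling of $H$: at the $i$-th stage, with nef threshold $\lambda_{i}$, Lemma~\ref{lem--extremal-ray} and the cone and contraction theorem \cite[Theorem~9.2]{fujino-quasi-log-analytic} applied to the quasi-log structure $[X_{i},K_{X_{i}}+B_{i}]$ produce a contraction $\varphi_{i}\colon X_{i}\to V_{i}$ of a single $(K_{X_{i}}+B_{i})$-negative extremal ray $R_{i}$ with $(K_{X_{i}}+B_{i}+\lambda_{i}H_{i})\cdot R_{i}=0$, and Theorem~\ref{thm--mmpstep-quasi-log} then produces the flip-type step $X_{i}\dashrightarrow X_{i+1}$ over $V_{i}$ with $\rho(X_{i}/Z;W)-\rho(V_{i}/Z;W)=1$. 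Since $(K_{X}+\Delta+A)|_{{\rm Nlc}(X,\Delta)}$ is semi-ample over a neighborhood of $W$ while ${\rm NNef}(K_{X}+\Delta+A/Z)\cap{\rm Nlc}(X,\Delta)\cap\pi^{-1}(W)=\emptyset$, each $\varphi_{i}$ is biholomorphic near the strict transform of ${\rm Nlc}(X,\Delta)$ — which is exactly the hypothesis of Theorem~\ref{thm--mmpstep-quasi-log} that $\varphi$ be biholomorphic on a neighborhood of ${\rm Nqlc}$ — so the non-biholomorphic locus of the whole MMP is disjoint from ${\rm Nlc}(X,\Delta)$, and Theorem~\ref{thm--mmpstep-quasi-log} also propagates $\mathbb Q$-factoriality over $W$.

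It remains to show this MMP terminates with $K_{X_{m}}+B_{m}$ semi-ample over $Z$, which is the crux of the theorem. The thresholds $\lambda_{i}$ are non-increasing; if $\lambda_{i}\downarrow\lambda>0$ I would apply the first two paragraphs to the pair $(X,\Delta+(A+\lambda H))$, which still satisfies every hypothesis because $A+\lambda H$ is $\pi$-ample and adding the ample $\lambda H$ only shrinks the non-nef locus and keeps $(K+B+\lambda H)|_{{\rm Nlc}(X,\Delta)}$ semi-ample, so that $(X,\Delta+(A+\lambda H))$ has a good minimal model; the standard argument for an MMP with scaling whose thresholds converge to a positive limit then yields a contradiction, hence $\lambda_{i}\to0$. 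With $\lambda_{i}\to0$ secured, termination follows exactly as in \cite[Proof of Theorem~5.3]{has-mmp-normal-pair}: the complex analytic special termination along the lc centers of $[X,\omega]$ (\cite{fujino-sp-ter}, \cite[Subsection~3.6]{eh-analytic-mmp}), Lemma~\ref{lem--mmp-fibration-kltlocus} together with Theorem~\ref{thm--klt-minmodeltheory-main} near ${\rm Nqklt}(X,\omega)$, and the analytic BCHM-type finiteness of \cite{fujino-analytic-bchm} away from ${\rm Nqklt}(X,\omega)$ force the MMP to stop after finitely many steps. When it stops, $K_{X_{m}}+B_{m}$ is nef over $W$, hence semi-ample over a neighborhood of $W$ by the base point free theorem for quasi-log complex analytic spaces (Theorem~\ref{thm--abundance-quasi-log}), and we shrink $Z$ to that neighborhood; the $\mathbb Q$-factoriality assertion has already been recorded. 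The one genuinely non-algebraic ingredient, and the place I expect to be the main obstacle, is the single-step construction of Theorem~\ref{thm--mmpstep-quasi-log}, resting on Lemma~\ref{lem--mmp-normal-pair-run}, which replaces the gluing of MMP used in the algebraic argument of \cite{has-mmp-normal-pair}.
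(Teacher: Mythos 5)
Your proposal is correct and takes essentially the same route as the paper: the paper's proof of Theorem \ref{thm--termination-lcmmp-main} simply cites the algebraic argument of \cite[Theorem 5.3]{has-mmp-normal-pair}, whose analytic skeleton is exactly what you assemble -- the identity quasi-log structure on $(X,\Delta+A)$, the step-by-step construction with scaling via Lemma \ref{lem--extremal-ray} and Theorem \ref{thm--mmpstep-quasi-log} (packaged in the paper as Theorem \ref{thm--mmp-nefthreshold-strict} and Corollaries \ref{cor--mmpwithscaling-normalpair}, \ref{cor--mmp-nomralpair-Qfacdlt}), the reduction of the positive-limit case via the good minimal model furnished by Theorem \ref{thm--mmp-lclocus-main}, termination for $\lambda_{i}\to 0$ via special termination, Lemma \ref{lem--mmp-fibration-kltlocus} and Theorem \ref{thm--klt-minmodeltheory-main}, and the final semi-ampleness via Theorem \ref{thm--abundance-quasi-log} (as in Theorem \ref{thm--termination-all-lcmmp}). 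You also correctly identify Theorem \ref{thm--mmpstep-quasi-log} and Lemma \ref{lem--mmp-normal-pair-run} as the genuinely analytic ingredient replacing the gluing of MMPs used in the algebraic case.
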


\begin{proof}
The argument of \cite[Proof of Theorem 5.3]{has-mmp-normal-pair} works with no changes. 
\end{proof}

\begin{thm}[cf.~{\cite[Theorem 5.4]{has-mmp-normal-pair}}]\label{thm--termination-all-lcmmp}
Let $\pi \colon X \to Z$ be a projective morphism from a normal analytic variety $X$ to a Stein space $Z$, and let $W \subset Z$ be a compact subset such that $\pi$ and $W$ satisfy (P). 
Let $(X,\Delta)$ be a normal pair and $A$ a $\pi$-ample $\mathbb{R}$-divisor on $X$ such that $K_{X}+\Delta+A$ is globally $\mathbb{R}$-Cartier and $\pi$-pseudo-effective. 
Suppose that ${\rm NNef}(K_{X}+\Delta+A/Z) \cap {\rm Nlc}(X,\Delta) \cap \pi^{-1}(W) = \emptyset$. 
Suppose in addition that $(K_{X}+\Delta+A)|_{{\rm Nlc}(X,\Delta)}$, which we think of an $\mathbb{R}$-line bundle on ${\rm Nlc}(X,\Delta)$, is semi-ample over a neighborhood of $W$. 
Let 
$$(X,\Delta+A)=:(X_{1},\Delta_{1}+A_{1}) \dashrightarrow \cdots \dashrightarrow (X_{i},\Delta_{i}+A_{i}) \dashrightarrow \cdots$$
be a sequence of steps of a $(K_{X}+\Delta+A)$-MMP over $Z$ around $W$ with scaling of $A$ such that if we put 
$$\lambda_{i}:=\{\mu \in \mathbb{R}_{\geq 0}\,| \,\text{$(K_{X_{i}}+\Delta_{i}+A_{i})+\mu A_{i}$ is nef over $W$} \}$$
for each $i \geq 1$, then ${\rm lim}_{i \to \infty} \lambda_{i}=0$. 
Then $\lambda_{m}=0$ for some $m$ and $K_{X_{m}}+\Delta_{m}+A_{m}$ is semi-ample over a neighborhood of $W$. 
\end{thm}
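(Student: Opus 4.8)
The plan is to reduce the termination assertion $\lambda_m=0$, by induction on the number of lc centers of $(X,\Delta)$, to the situation of Lemma~\ref{lem--mmp-fibration-kltlocus}, and then to promote nefness of $K_{X_m}+B_m$ to semi-ampleness by comparison with the good minimal model supplied by Theorem~\ref{thm--termination-lcmmp-main}.

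\emph{Reduction steps.} First, every step $(X_i,B_i)\dashrightarrow(X_{i+1},B_{i+1})$ contracts, over $W$, curves $C$ with $(K_{X_i}+B_i)\cdot C<0$, so by Theorem~\ref{thm--nonnef-negativecurve} each such $C$ lies in ${\rm NNef}(K_{X_i}+B_i/Z)$; since disjointness from ${\rm Nlc}$ is inherited along the MMP (Remark~\ref{rem--nonnef-relation}), every step is a biholomorphism on a neighborhood of ${\rm Nlc}(X_i,\Delta_i)\cap\pi^{-1}(W)$, so that ${\rm Nlc}(X_i,\Delta_i)$ and the semi-ampleness of $(K_{X_i}+B_i)|_{{\rm Nlc}(X_i,\Delta_i)}$ over a neighborhood of $W$ persist. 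Hence it is enough to produce $m$ with $\lambda_m=0$: then $K_{X_m}+B_m$ is nef over $W$, and $(X_m,B_m)$ is a minimal model of $(X,\Delta+A)$ over $Z$ around $W$ by the analog of \cite[Lemma 3.2]{has-mmp-normal-pair}; Theorem~\ref{thm--termination-lcmmp-main} provides a good minimal model of $(X,\Delta+A)$ over $Z$ around $W$, and any minimal model of a normal pair that admits a good minimal model is itself good (the analog of \cite[Proposition 3.4]{has-mmp-normal-pair}), so $K_{X_m}+B_m$ is semi-ample over a neighborhood of $W$.

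\emph{Termination.} View $(X,\Delta)$ as the quasi-log complex analytic space $[X,K_X+\Delta]$, so that the supports of ${\rm Nqlc}$ and ${\rm Nqklt}$ are ${\rm Nlc}(X,\Delta)$ and ${\rm Nklt}(X,\Delta)$ respectively and the given sequence is a sequence of steps of an $(\omega+A)$-MMP with scaling of $A$, where $\omega:=K_X+\Delta$. We induct on the finite number of lc centers of $(X,\Delta)$. If there are none, then ${\rm Nklt}(X,\Delta)$ and ${\rm Nlc}(X,\Delta)$ have the same support, the hypotheses become those of Lemma~\ref{lem--mmp-fibration-kltlocus}, and we get $\lambda_m=0$. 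In general, by the complex analytic special termination of the MMP (\cite[Subsection 3.6]{eh-analytic-mmp}), applied after a dlt blow-up of $(X,\Delta)$ (Theorem~\ref{thm--dlt-blowup}), there is an index $N$ beyond which every step is a biholomorphism near ${\rm Nklt}(X_i,\Delta_i)\cap\pi^{-1}(W)$. Adjunction for quasi-log complex analytic spaces (\cite[Theorem 4.4]{fujino-quasi-log-analytic}) together with Theorem~\ref{thm--vanishing-quasi-log}, used as in the proof of Theorem~\ref{thm--non-vanishing-kltmmp-main}, reduces the behaviour of the tail of the MMP near a point $w\in W$ to that of a restricted MMP with scaling on a small Stein neighborhood of $w$ involving strictly fewer lc centers, whose nef threshold vanishes after finitely many steps by the induction hypothesis; covering the compact set $W$ by finitely many such neighborhoods and letting $m$ be the largest of the corresponding indices gives $\lambda\ge\lambda_m$, hence $\lambda=\lambda_m=0$.

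\emph{Main obstacle.} The crux is the inductive step: besides the special termination of the analytic MMP and the quasi-log adjunction, it requires transferring the conclusion ``$\lambda_i=0$ for some $i$'' from an open cover of $W$ back to $W$, which is the same analytic gluing difficulty that necessitated Lemma~\ref{lem--mmp-normal-pair-run}. Unlike the existence statement in Theorem~\ref{thm--mmp-lclocus-main}, however, here the patching amounts to the compactness argument of Step~3 in the proof of Lemma~\ref{lem--mmp-normal-pair-run} — taking a maximum of finitely many indices — which keeps it under control.
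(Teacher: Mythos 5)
Your first paragraph (persistence of the hypotheses along the MMP, and the deduction of semi-ampleness once $\lambda_m=0$ by comparing the resulting minimal model with the good minimal model supplied by Theorem~\ref{thm--termination-lcmmp-main} via the analogs of \cite[Lemma 3.2, Proposition 3.4]{has-mmp-normal-pair}) is essentially sound, except that Remark~\ref{rem--nonnef-relation} alone does not give the persistence: it compares non-nef loci under pull-back, not under the birational transform in a flip, so you need the standard fact that asymptotic vanishing orders $\sigma_P$ do not increase along steps of a $(K_{X_i}+B_i)$-MMP (together with the negativity argument showing each step is a biholomorphism near ${\rm Nlc}\cap\pi^{-1}(W)$). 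That is a fixable imprecision. The real problem is the termination step, which is the entire content of the theorem beyond Theorem~\ref{thm--termination-lcmmp-main}.

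Your induction on the number of lc centers has no substantiated inductive step, and it does not correspond to any mechanism available in the paper. First, the special termination of \cite[Subsection 3.6]{eh-analytic-mmp} is a statement about an MMP for an lc (dlt) pair; to use it ``after a dlt blow-up'' you would have to lift the \emph{given} MMP on $X$ to an MMP on the dlt model, which is itself a nontrivial construction (with exactly the analytic gluing difficulties that forced Lemma~\ref{lem--mmp-normal-pair-run}), and is nowhere carried out. Second, and more seriously, the sentence claiming that quasi-log adjunction plus Theorem~\ref{thm--vanishing-quasi-log}, ``as in the proof of Theorem~\ref{thm--non-vanishing-kltmmp-main}'', reduces the tail of the MMP near $w\in W$ to ``a restricted MMP with strictly fewer lc centers'' is a non sequitur: that argument lifts sections to prove a nonvanishing statement, it does not produce a new pair or quasi-log space, and you never specify what the auxiliary data is, why it has fewer lc centers (restricting to a small neighborhood of $w$ removes none — all lc centers may meet the fiber over $w$), why it satisfies the hypotheses of the theorem (non-nef locus disjoint from its non-lc locus, semi-ampleness along it), or why termination of that auxiliary MMP forces $\lambda_m=0$ for the original one; consequently the final covering-of-$W$ argument has nothing to apply the inductive hypothesis to. The intended argument — the one imported verbatim from \cite[Theorem 5.4]{has-mmp-normal-pair} — is not an induction on the number of lc centers: after the MMP stabilizes near the non-klt locus, one restricts to ${\rm Nqklt}$ by quasi-log adjunction (\cite[Theorem 4.4]{fujino-quasi-log-analytic}), uses Theorem~\ref{thm--abundance-quasi-log} to obtain semi-ampleness of $(K_{X_i}+\Delta_i+A_i)|_{{\rm Nqklt}}$ over a neighborhood of $W$ and checks that the non-nef locus avoids ${\rm Nqklt}$ near $W$, so that the tail satisfies the hypotheses of Lemma~\ref{lem--mmp-fibration-kltlocus} — the lemma you only invoke in your base case — which then yields $\lambda_m=0$ via the canonical bundle formula reduction (Lemma~\ref{lem--can-bundle-formula}) and Theorem~\ref{thm--klt-minmodeltheory-main}. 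As written, your proposal has a genuine gap precisely at this point.
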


\begin{proof}
The argument of \cite[Proof of Theorem 5.3]{has-mmp-normal-pair} works with no changes. 
\end{proof}

\subsection{Proofs of corollaries}

In this subsection we prove corollaries.

\begin{cor}[cf.~{\cite[Corollary 5.5]{has-mmp-normal-pair}}]\label{cor--nonvan-lc-main}
Let $\pi \colon X \to Z$ be a projective morphism from a normal analytic variety $X$ to a Stein space $Z$, and let $W \subset Z$ be a compact subset such that $\pi$ and $W$ satisfy (P). 
Let $(X,\Delta)$ be a normal pair and $A$ a $\pi$-ample $\mathbb{R}$-divisor on $X$ such that $K_{X}+\Delta+A$ is globally $\mathbb{R}$-Cartier and $\pi$-pseudo-effective. 
Suppose that ${\rm NNef}(K_{X}+\Delta+A/Z) \cap {\rm Nlc}(X,\Delta) \cap \pi^{-1}(W)  = \emptyset$. 
Suppose in addition that $(K_{X}+\Delta+A)|_{{\rm Nlc}(X,\Delta)}$, which we think of an $\mathbb{R}$-line bundle on ${\rm Nlc}(X,\Delta)$, is semi-ample over a neighborhood of $W$. 
Then we have ${\rm Bs}|K_{X}+\Delta+A/Z|_{\mathbb{R}} \cap {\rm Nlc}(X,\Delta)\cap \pi^{-1}(W) = \emptyset$. 
\end{cor}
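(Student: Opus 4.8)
The plan is to deduce this from the main theorem, Theorem \ref{thm--termination-lcmmp-main}, essentially as in \cite[Proof of Corollary 5.5]{has-mmp-normal-pair}. First I would apply Theorem \ref{thm--termination-lcmmp-main} to the given data $\pi\colon X\to Z$, $(X,\Delta)$, $A$, together with an arbitrary $\pi$-ample $\mathbb{R}$-divisor $H$ as the scaling divisor. Writing $(X_{1},B_{1}):=(X,\Delta+A)$ as in that theorem, we obtain, after shrinking $Z$ around $W$, a sequence of steps of a $(K_{X_{1}}+B_{1})$-MMP over $Z$ around $W$
$$
(X_{1},B_{1})\dashrightarrow\cdots\dashrightarrow (X_{i},B_{i})\dashrightarrow (X_{i+1},B_{i+1})\dashrightarrow\cdots\dashrightarrow (X_{m},B_{m}),
$$
represented by bimeromorphic contractions, whose non-biholomorphic locus is disjoint from ${\rm Nlc}(X,\Delta)$ and such that $K_{X_{m}}+B_{m}$ is semi-ample over $Z$. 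Since $Z$ is Stein, after a further shrinking we may assume ${\rm Bs}|K_{X_{m}}+B_{m}/Z|_{\mathbb{R}}=\emptyset$.

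The second step is to propagate the emptiness of the stable base locus backwards along the MMP, up to the non-biholomorphic locus. For a single step $\phi_{i}\colon X_{i}\dashrightarrow X_{i+1}$ sitting over $V_{i}$, I would take a common resolution $p\colon W_{i}\to X_{i}$, $q\colon W_{i}\to X_{i+1}$ and use the negativity lemma (\cite[Corollary 2.16]{eh-analytic-mmp}), together with $-(K_{X_{i}}+B_{i})$ being $V_{i}$-ample and $K_{X_{i+1}}+B_{i+1}$ being $V_{i}$-ample, to conclude that $F_{i}:=p^{*}(K_{X_{i}}+B_{i})-q^{*}(K_{X_{i+1}}+B_{i+1})\ge 0$ with $\Supp F_{i}$ contained in the prime divisors lying over the non-biholomorphic locus of $\phi_{i}$. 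Now fix a point $x\in X$ outside the non-biholomorphic locus of the whole MMP; then $X\dashrightarrow X_{m}$ is a biholomorphism near $x$, so we may pick $E_{m}\ge 0$ with $E_{m}\sim_{\mathbb{R},\,Z}K_{X_{m}}+B_{m}$ and with the image of $x$ not in $\Supp E_{m}$. Transporting $E_{m}$ backwards one step at a time, the divisor $p_{*}(q^{*}E_{i+1}+F_{i})$ is effective, is $\mathbb{R}$-linearly equivalent over $Z$ to $K_{X_{i}}+B_{i}$ (here $K_{X_{i}}+B_{i}$ is $\mathbb{R}$-Cartier), and its support lies in the preimage of $\Supp E_{i+1}$ together with the non-biholomorphic locus of $\phi_{i}$; since the relevant image of $x$ avoids both, we eventually get $E_{1}\ge 0$ with $E_{1}\sim_{\mathbb{R},\,Z}K_{X}+\Delta+A$ and $x\notin\Supp E_{1}$. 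Hence ${\rm Bs}|K_{X}+\Delta+A/Z|_{\mathbb{R}}$ is contained in the non-biholomorphic locus of the MMP.

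Finally, since the non-biholomorphic locus of the MMP is disjoint from ${\rm Nlc}(X,\Delta)$, we conclude ${\rm Bs}|K_{X}+\Delta+A/Z|_{\mathbb{R}}\cap{\rm Nlc}(X,\Delta)\cap\pi^{-1}(W)=\emptyset$ over the shrunk base. To return to the original $Z$ one invokes Remark \ref{rem--stable-base-locus-restriction}, which says that the stable base locus of the globally $\mathbb{R}$-Cartier divisor $K_{X}+\Delta+A$ restricts correctly to Stein open subsets, together with the fact that ${\rm Nlc}(X,\Delta)$ and $\pi^{-1}(W)$ are unaffected by shrinking $Z$ around $W$. (Note that the conclusion is genuinely stronger than the hypothesis on the intersection, since ${\rm NNef}(K_{X}+\Delta+A/Z)\subset{\rm Bs}|K_{X}+\Delta+A/Z|_{\mathbb{R}}$ by Remark \ref{rem--nonnef-relation}.)

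The main obstacle I anticipate is the bookkeeping in the second step: one must ensure that transporting an effective divisor backwards through flipping and divisorial contractions never produces a new component through the chosen point $x$, i.e. that the only components that can appear along the way are supported on the non-biholomorphic locus. This is precisely where the negativity lemma and the relative ampleness of $-(K_{X_{i}}+B_{i})$ and of $K_{X_{i+1}}+B_{i+1}$ are used, and one must also track the (harmless) shrinkings of $Z$ so that "semi-ample over $Z$" can be upgraded to "base-point-free over $Z$" and so that Remark \ref{rem--stable-base-locus-restriction} applies; none of this differs from the algebraic argument in \cite{has-mmp-normal-pair}.
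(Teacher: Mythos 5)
Your proposal is correct and follows essentially the same route as the paper, whose proof of Corollary \ref{cor--nonvan-lc-main} is simply to invoke Theorem \ref{thm--termination-lcmmp-main}: since the MMP it produces is represented by bimeromorphic contractions, is biholomorphic near ${\rm Nlc}(X,\Delta)$, and ends with $K_{X_m}+B_m$ semi-ample over $Z$, the stable base locus of $K_X+\Delta+A$ near $\pi^{-1}(W)$ is confined to the non-biholomorphic locus, exactly as you argue. Your extra details (negativity lemma on a common resolution, transporting effective members backwards, and Remark \ref{rem--stable-base-locus-restriction} to undo the shrinking) are the standard bookkeeping implicit in the paper's ``immediately follows.''
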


\begin{proof}
This immediately follows from Theorem \ref{thm--termination-lcmmp-main}. 
\end{proof}

\begin{cor}[cf.~{\cite[Corollary 5.6]{has-mmp-normal-pair}}]\label{cor--mmp-from-nonvan-fullgeneral}
Let $\pi \colon X \to Z$ be a projective morphism from a normal analytic variety $X$ to a Stein space $Z$, and let $W \subset Z$ be a compact subset such that $\pi$ and $W$ satisfy (P). 
Let $(X,\Delta)$ be a normal pair and $A$ a $\pi$-ample $\mathbb{R}$-divisor on $X$ such that $K_{X}+\Delta+A$ is globally $\mathbb{R}$-Cartier and $\pi$-pseudo-effective. 
Suppose that ${\rm Bs}|K_{X}+\Delta+A/Z|_{\mathbb{R}} \cap {\rm Nlc}(X,\Delta) \cap \pi^{-1}(W)  = \emptyset$. 
We put $(X_{1},B_{1}):=(X,\Delta+A)$. 
Let $H$ be a $\pi$-ample $\mathbb{R}$-divisor on $X$. 
Then, after shrinking $Z$ around $W$, there exists a sequence of steps of a $(K_{X_{1}}+B_{1})$-MMP over $Z$ around $W$ with scaling of $H$ 
$$
\xymatrix{
(X_{1},B_{1})\ar@{-->}[r]&\cdots \ar@{-->}[r]& (X_{i},B_{i})\ar@{-->}[rr]\ar[dr]&& (X_{i+1},B_{i+1})\ar[dl] \ar@{-->}[r]&\cdots\ar@{-->}[r]&(X_{m},B_{m}) \\
&&&V_{i}
}
$$
represented by bimeromorphic contractions, where $(X_{i},B_{i}) \to W_{i} \leftarrow (X_{i+1},B_{i+1})$ is a step of a $(K_{X_{i}}+B_{i})$-MMP over $Z$ around $W$, such that
\begin{itemize}
\item
the non-biholomorphic locus of the MMP is disjoint from ${\rm Nlc}(X,\Delta)$, 
\item
$\rho(X_{i}/Z;W)-\rho(V_{i}/Z;W)=1$ for every $i \geq 1$, and
\item
$K_{X_{m}}+B_{m}$ is semi-ample over $Z$. 
\end{itemize}
Moreover, if $X$ is $\mathbb{Q}$-factorial over $W$, then all $X_{i}$ are also $\mathbb{Q}$-factorial over $W$. 
\end{cor}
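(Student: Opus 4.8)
The plan is to derive this corollary directly from Theorem \ref{thm--termination-lcmmp-main}, whose conclusion is word for word the conclusion asked for here: a sequence of steps of a $(K_{X_1}+B_1)$-MMP over $Z$ around $W$ with scaling of $H$, represented by bimeromorphic contractions, whose non-biholomorphic locus is disjoint from ${\rm Nlc}(X,\Delta)$, with $\rho(X_i/Z;W)-\rho(V_i/Z;W)=1$ at each step, terminating with $K_{X_m}+B_m$ semi-ample over $Z$, and preserving $\mathbb{Q}$-factoriality over $W$. Hence it suffices to check that the two hypotheses of Theorem \ref{thm--termination-lcmmp-main} that are not literally assumed here, namely ${\rm NNef}(K_X+\Delta+A/Z)\cap{\rm Nlc}(X,\Delta)\cap\pi^{-1}(W)=\emptyset$ and the semi-ampleness over a neighborhood of $W$ of the $\mathbb{R}$-line bundle $(K_X+\Delta+A)|_{{\rm Nlc}(X,\Delta)}$, both follow from the single hypothesis ${\rm Bs}|K_X+\Delta+A/Z|_{\mathbb{R}}\cap{\rm Nlc}(X,\Delta)\cap\pi^{-1}(W)=\emptyset$.

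The first of these is immediate: by Remark \ref{rem--nonnef-relation} we have ${\rm NNef}(K_X+\Delta+A/Z)\subset{\rm Bs}|K_X+\Delta+A/Z|_{\mathbb{R}}$, so the non-nef locus also misses ${\rm Nlc}(X,\Delta)$ over $\pi^{-1}(W)$ (in particular $K_X+\Delta+A$ is $\pi$-pseudo-effective, as already assumed). For the second, first shrink $Z$ around $W$ so that, using Remark \ref{rem--stable-base-locus-restriction} to pass between the relative stable base locus over $Z$ and over Stein open subsets, ${\rm Bs}|K_X+\Delta+A/Z|_{\mathbb{R}}\cap{\rm Nlc}(X,\Delta)=\emptyset$ as subsets of $X$. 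Then, using that ${\rm Nlc}(X,\Delta)\cap\pi^{-1}(W)$ is compact together with the complex analytic analog of the results on $\mathbb{R}$-linear systems recorded in Subsection \ref{subsec--linear-system} (the analog of \cite[Lemmas 2.4, 2.5, Theorem 2.6]{has-mmp-normal-pair}), one promotes this to the statement that the restriction $(K_X+\Delta+A)|_{{\rm Nlc}(X,\Delta)}$ is semi-ample over a neighborhood of $W$.

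With both hypotheses in hand, and after replacing $A$ by a general member of $|A/Z|_{\mathbb{R}}$ if necessary — such a member is effective because $A$ is $\pi$-ample, and for a general choice one has ${\rm Nlc}(X,\Delta+A)={\rm Nlc}(X,\Delta)$, so that $(X,\Delta+A)$ is a normal pair as required by the notation $(X_1,B_1):=(X,\Delta+A)$ — we apply Theorem \ref{thm--termination-lcmmp-main} to $(X,\Delta)$, $A$, and $H$, and obtain exactly the desired MMP. I expect the only non-formal point to be the upgrade carried out in the previous paragraph, from ``the relative stable base locus avoids ${\rm Nlc}(X,\Delta)$'' to ``the restriction of $K_X+\Delta+A$ to ${\rm Nlc}(X,\Delta)$ is semi-ample near $W$'': this is where the $\mathbb{R}$-linear system machinery and the compactness of $W$ are used in an essential way, and where one must invoke the complex analytic analog of the corresponding algebraic result rather than merely repeating its proof verbatim.
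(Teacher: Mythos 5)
Your proposal is correct and follows essentially the same route as the paper, whose entire proof is that the corollary follows immediately from Theorem \ref{thm--termination-lcmmp-main}; you simply make explicit the two deductions the paper treats as immediate, namely ${\rm NNef}(K_X+\Delta+A/Z)\subset{\rm Bs}|K_X+\Delta+A/Z|_{\mathbb{R}}$ and the passage from the base-locus hypothesis (via compactness of ${\rm Nlc}(X,\Delta)\cap\pi^{-1}(W)$ and the $\mathbb{R}$-linear system results recorded in the preliminaries) to semi-ampleness of $(K_X+\Delta+A)|_{{\rm Nlc}(X,\Delta)}$ over a neighborhood of $W$.
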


\begin{proof}
This immediately follows from Theorem \ref{thm--termination-lcmmp-main}. 
\end{proof}

\begin{cor}[cf.~{\cite[Corollary 5.7]{has-mmp-normal-pair}}]\label{cor--mmp-lc-strictnefthreshold-main}
Let $\pi \colon X \to Z$ be a projective morphism from a normal analytic variety $X$ to a Stein space $Z$, and let $W \subset Z$ be a compact subset such that $\pi$ and $W$ satisfy (P). 
Let $(X,\Delta)$ be a normal pair such that $K_{X}+\Delta$ is $\pi$-pseudo-effective and ${\rm NNef}(K_{X}+\Delta/Z) \cap {\rm Nlc}(X,\Delta) \cap \pi^{-1}(W)  = \emptyset$. 
Let $A$ be a $\pi$-ample $\mathbb{R}$-divisor on $X$. 
Then there exists a sequence of steps of a $(K_{X}+\Delta)$-MMP over $Z$ around $W$ with scaling of $A$ 
$$
\xymatrix{
(X,\Delta)=:(X_{1},\Delta_{1})\ar@{-->}[r]&\cdots \ar@{-->}[r]& (X_{i},\Delta_{i})\ar@{-->}[rr]\ar[dr]_{\varphi_{i}}&& (X_{i+1},\Delta_{i+1})\ar[dl]^{\varphi'_{i}} \ar@{-->}[r]&\cdots, \\
&&&V_{i}
}
$$
where $(X_{i},\Delta_{i}) \to V_{i} \leftarrow (X_{i+1},\Delta_{i+1})$ is a step of the $(K_{X}+\Delta)$-MMP over $Z$ around $W$, such that
\begin{itemize}
\item
the non-isomorphic locus of the MMP is disjoint from ${\rm Nlc}(X,\Delta)$, 
\item
$\rho(X_{i}/Z;W)-\rho(V_{i}/Z;W)=1$ for every $i \geq 1$, and
\item
if we put 
$$\lambda_{i}:={\rm inf}\{\mu \in \mathbb{R}_{\geq 0}\,| \,\text{$K_{X_{i}}+\Delta_{i}+\mu A_{i}$ is nef over $W$} \}$$  
for each $i \geq 1$, then ${\rm lim}_{i \to \infty}\lambda_{i}=0$. 
\end{itemize}
Moreover, if $X$ is $\mathbb{Q}$-factorial over $W$, then all $X_{i}$ are also $\mathbb{Q}$-factorial over $W$. 
\end{cor}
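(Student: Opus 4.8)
The plan is to realize the required $(K_X+\Delta)$-MMP with scaling of $A$ as a concatenation of finitely many finite blocks, each produced by Theorem \ref{thm--termination-lcmmp-main}, arranged so that the nef thresholds are pushed below an auxiliary null sequence; this is the analytic counterpart of the proof of \cite[Corollary 5.7]{has-mmp-normal-pair}. First I would shrink $Z$ around $W$ so that $K_X+\Delta$ is globally $\mathbb{R}$-Cartier, and set
$$\lambda_0:={\rm inf}\{\mu\in\mathbb{R}_{\geq 0}\mid K_X+\Delta+\mu A\ \text{is nef over}\ W\},$$
which is finite because $A$ is $\pi$-ample (so $K_X+\Delta+\mu A$ is $\pi$-ample, hence nef over $W$, for $\mu\gg 0$). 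If $\lambda_0=0$ there is nothing to prove, so I assume $\lambda_0>0$ and fix a strictly decreasing sequence $\lambda_0=:t_0>t_1>t_2>\cdots$ with ${\rm lim}_{k\to\infty}t_k=0$; after an $\mathbb{R}$-linear equivalence I may also assume $A\geq 0$, so that all strict transforms of $A$ along the MMP stay effective, hence $\pi$-pseudo-effective.

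The inductive step is as follows. Suppose that after producing blocks $1,\dots,k-1$ we have reached a normal pair $(X',\Delta')$, bimeromorphic to $(X,\Delta)$ over $Z$ and biholomorphic to it over a neighborhood of ${\rm Nlc}(X,\Delta)$, with $K_{X'}+\Delta'$ $\pi$-pseudo-effective and ${\rm NNef}(K_{X'}+\Delta'/Z)\cap{\rm Nlc}(X',\Delta')\cap\pi^{-1}(W)=\emptyset$ (for $k=1$ take $(X',\Delta')=(X,\Delta)$). I would then check that $(X',\Delta')$, the $\pi$-ample divisor $t_kA'$, and the $\pi$-ample divisor $H:=A'$ satisfy the hypotheses of Theorem \ref{thm--termination-lcmmp-main}: after a further shrinking of $Z$ the divisor $K_{X'}+\Delta'+t_kA'$ is globally $\mathbb{R}$-Cartier and $\pi$-pseudo-effective; the non-nef locus cannot grow when a $\pi$-ample class is added (a formal consequence of Definition \ref{defn--asymp-van-order}, cf.~\cite[Theorem 4.7]{eh-analytic-mmp}), so ${\rm NNef}(K_{X'}+\Delta'+t_kA'/Z)$ is still disjoint from ${\rm Nlc}(X',\Delta')\cap\pi^{-1}(W)$; and by Theorem \ref{thm--nonnef-negativecurve} the restriction $(K_{X'}+\Delta')|_{{\rm Nlc}(X',\Delta')}$ is nef over $W$, whence $(K_{X'}+\Delta'+t_kA')|_{{\rm Nlc}(X',\Delta')}$ is ample, in particular semi-ample, over a neighborhood of $W$. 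Applying Theorem \ref{thm--termination-lcmmp-main} (with boundary $\Delta'+t_kA'$ and scaling of $A'$) and shrinking $Z$, I obtain a finite sequence of steps of a $(K_{X'}+\Delta'+t_kA')$-MMP over $Z$ around $W$ with scaling of $A'$, with relative Picard number dropping by one at each step, with non-biholomorphic locus disjoint from ${\rm Nlc}$, preserving $\mathbb{Q}$-factoriality over $W$, and terminating with the strict transform of $K_{X'}+\Delta'+t_kA'$ semi-ample over $Z$; this is block $k$.

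Next I would reinterpret block $k$ as a piece of a $(K_X+\Delta)$-MMP with scaling of $A$. Using that the scaling divisor is strictly positive on the curves contracted by each extremal contraction $X_i\to V_i$ of the block, one checks that $-(K_{X_i}+\Delta_i)=-(K_{X_i}+\Delta_i+t_kA_i)+t_kA_i$ is ample over $V_i$ and, by the analogous computation on the other side of the flip, that $K_{X_{i+1}}+\Delta_{i+1}$ is ample over $V_i$; hence each step of the block is a step of a $(K_{X_i}+\Delta_i)$-MMP over $Z$ around $W$ with $\rho(X_i/Z;W)-\rho(V_i/Z;W)=1$, whose $(K+\Delta)$-scaling threshold equals $t_k$ plus the corresponding $(K+\Delta+t_kA)$-scaling threshold, and therefore lies in $(t_k,t_{k-1}]$ and is $\leq t_k$ at the end of the block. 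Since the block is a biholomorphism near ${\rm Nlc}$, its endpoint again satisfies the inductive hypotheses ($\pi$-pseudo-effectivity of $K+\Delta$ and the non-nef/non-lc disjointness being preserved along the MMP), so I may continue with $t_{k+1}$. Concatenating all blocks — allowing the countably many shrinkings of $Z$ that Definition \ref{defn--mmp-fullgeneral} permits — produces the desired $(K_X+\Delta)$-MMP with scaling of $A$: its non-biholomorphic locus is disjoint from ${\rm Nlc}(X,\Delta)$, the relative Picard number drops by one at each step, $\mathbb{Q}$-factoriality over $W$ is preserved, and ${\rm lim}_{i\to\infty}\lambda_i=0$ because the thresholds in block $k$ lie in $(t_k,t_{k-1}]$ and $t_k\to 0$ (if the process stops after finitely many steps it does so with $K+\Delta$ nef over $W$, which yields the same conclusion).

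The step I expect to be the main obstacle is the verification that the inductive hypotheses persist from one block to the next — concretely, that along a $(K_X+\Delta)$-MMP which is a biholomorphism near ${\rm Nlc}$ the divisor $K_X+\Delta$ stays $\pi$-pseudo-effective and ${\rm NNef}(K_X+\Delta/Z)\cap{\rm Nlc}\cap\pi^{-1}(W)$ stays empty — together with the bookkeeping of the countably many shrinkings of $Z$. Both are routine transcriptions of the arguments in \cite[Corollary 5.7]{has-mmp-normal-pair} (cf.~also \cite[Remark 3.7]{has-mmp-normal-pair} and the negativity lemma \cite[Corollary 2.16]{eh-analytic-mmp}), the relevant properties of the non-nef locus being those recalled around Remark \ref{rem--nonnef-relation}.
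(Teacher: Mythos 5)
Your block-by-block strategy has a genuine gap at the very place where it differs from the paper: from the second block on, you apply Theorem \ref{thm--termination-lcmmp-main} on the model $X'$ with the divisors $t_kA'$ and $H:=A'$, calling both ``$\pi$-ample''. But $A'$ is only the strict transform of $A$ under the previous bimeromorphic contractions; after the first divisorial contraction or flip it is in general merely effective (as you yourself note when you arrange $A\geq 0$ to keep it pseudo-effective), not $\pi$-ample. So the hypotheses of Theorem \ref{thm--termination-lcmmp-main} (``$A$ a $\pi$-ample $\mathbb{R}$-divisor'', ``$H$ a $\pi$-ample $\mathbb{R}$-divisor'') are not satisfied in block $k\geq 2$, and your verification that $(K_{X'}+\Delta'+t_kA')|_{{\rm Nlc}(X',\Delta')}$ is ample, hence semi-ample, near $W$ also collapses, since it rests on the ampleness of $t_kA'$ added to a divisor that is only nef over $W$. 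You cannot repair this by choosing a fresh ample divisor on $X'$: Definition \ref{defn--mmp-fullgeneral} forces the scaling divisor to be the successive strict transforms $A_i$ of the original $A$, and the thresholds $\lambda_i$ in the statement are computed with exactly these $A_i$. In addition, the persistence of ${\rm NNef}(K_{X'}+\Delta'/Z)\cap{\rm Nlc}(X',\Delta')\cap\pi^{-1}(W)=\emptyset$ on the later models, which you flag as the ``main obstacle'' and defer to the proof of \cite[Corollary 5.7]{has-mmp-normal-pair}, cannot be borrowed from there, because that proof does not proceed by blocks and never needs such a statement.

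The paper's own argument avoids all of this by staying on the original $X$, where $A$ really is ample: one runs a single $(K_X+\Delta)$-MMP over $Z$ around $W$ with scaling of $A$ as in Theorem \ref{thm--mmp-nefthreshold-strict} (or Corollary \ref{cor--mmp-nomralpair-Qfacdlt}), so that the non-biholomorphic locus avoids ${\rm Nlc}(X,\Delta)$, $\rho$ drops by one at each step, and the $\lambda_i$ are strictly decreasing unless the MMP stops with $\lambda_m=0$. Setting $\lambda:=\lim_i\lambda_i$ and supposing $\lambda>0$, the same sequence is a $(K_X+\Delta+\lambda A)$-MMP with scaling whose thresholds tend to $0$; since $\lambda A$ is $\pi$-ample on $X$ and $(K_X+\Delta+\lambda A)|_{{\rm Nlc}(X,\Delta)}$ is nef over $W$ by Theorem \ref{thm--nonnef-negativecurve} plus ample, hence semi-ample near $W$, Theorem \ref{thm--termination-all-lcmmp} applies and forces $\lambda_m=\lambda$ for some $m$, contradicting the strict decrease; hence $\lambda=0$. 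If you want to keep your proposal, you must either prove a version of Theorem \ref{thm--termination-lcmmp-main} whose hypotheses survive on the later models (which is essentially Theorem \ref{thm--termination-all-lcmmp}) or switch to this direct argument.
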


\begin{proof}
This follows from Theorem \ref{thm--termination-all-lcmmp}. 
\end{proof}

\begin{cor}[cf.~{\cite[Corollary 5.10]{has-mmp-normal-pair}}]\label{cor--finite-generation-main}
Let $\pi \colon X \to Z$ be a projective morphism of from a normal analytic variety  to a Stein space. 
Let $(X,\Delta)$ be a normal pair such that $\Delta$ is a $\mathbb{Q}$-divisor on $X$. 
Let $A$ be a $\pi$-ample $\mathbb{Q}$-divisor on $X$ such that $K_{X}+\Delta+A$ is globally $\mathbb{Q}$-Cartier and $\pi$-pseudo-effective and ${\rm NNef}(K_{X}+\Delta+A/Z) \cap {\rm Nlc}(X,\Delta) \cap = \emptyset.$
 Suppose in addition that $(K_{X}+\Delta+A)|_{{\rm Nlc}(X,\Delta)}$, which we think of a $\mathbb{Q}$-line bundle on ${\rm Nlc}(X,\Delta)$, is semi-ample over $Z$. 
Then the sheaf of graded $\pi_{*}\mathcal{O}_{X}$-algebra
$$\underset{m \in \mathbb{Z}_{\geq 0}}{\bigoplus}\pi_{*}\mathcal{O}_{X}(\lfloor m(K_{X}+\Delta+A)\rfloor)$$
is locally finitely generated. 
\end{cor}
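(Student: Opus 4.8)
The plan is to deduce this from Theorem~\ref{thm--termination-lcmmp-main} together with the fact that a relatively semi-ample $\mathbb{Q}$-Cartier divisor has a locally finitely generated relative section algebra over a Stein space. Since local finite generation is a local statement on $Z$, I would fix an arbitrary point $z\in Z$ and choose a Stein compact subset $W\subset Z$ with $z\in W$ such that $\pi$ and $W$ satisfy (P); such a $W$ always exists by the standard construction (a closed polydisc inside a local Stein chart around $z$). The hypotheses ${\rm NNef}(K_{X}+\Delta+A/Z)\cap {\rm Nlc}(X,\Delta)=\emptyset$ and the $\pi$-semi-ampleness of $(K_{X}+\Delta+A)|_{{\rm Nlc}(X,\Delta)}$ in particular give ${\rm NNef}(K_{X}+\Delta+A/Z)\cap {\rm Nlc}(X,\Delta)\cap \pi^{-1}(W)=\emptyset$ and semi-ampleness over a neighbourhood of $W$, so with any choice of a $\pi$-ample $\mathbb{R}$-divisor $H$ (for instance $H=A$) all the assumptions of Theorem~\ref{thm--termination-lcmmp-main} are met.

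Applying Theorem~\ref{thm--termination-lcmmp-main} and shrinking $Z$ around $z$ to a Stein open neighbourhood, I would obtain a sequence of steps of a $(K_{X}+\Delta+A)$-MMP over $Z$ around $W$ represented by bimeromorphic contractions
$$(X_{1},B_{1})\dashrightarrow (X_{2},B_{2})\dashrightarrow\cdots\dashrightarrow (X_{m},B_{m}),$$
with $(X_{1},B_{1})=(X,\Delta+A)$ and $K_{X_{m}}+B_{m}$ semi-ample over $Z$ (semi-ampleness over $Z$ restricts to any Stein open, so this survives the shrinking). Since $\Delta$ and $A$ are $\mathbb{Q}$-divisors and $K_{X}+\Delta+A$ is globally $\mathbb{Q}$-Cartier, each $B_{i}$ is a $\mathbb{Q}$-divisor and each $K_{X_{i}}+B_{i}$ is $\mathbb{Q}$-Cartier; moreover, as each $X_{i}$ is normal and bimeromorphic to $X$ over $Z$, the structure morphisms $\pi_{i}\colon X_{i}\to Z$ satisfy $(\pi_{i})_{*}\mathcal{O}_{X_{i}}\cong \pi_{*}\mathcal{O}_{X}$.

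Next I would show that each step of the MMP identifies the relevant graded algebras. For a single step $(X_{i},B_{i})\to V_{i}\leftarrow (X_{i+1},B_{i+1})$, the induced bimeromorphic map $X_{i}\dashrightarrow X_{i+1}$ is a bimeromorphic contraction, $-(K_{X_{i}}+B_{i})$ is ample over $V_{i}$, and $K_{X_{i+1}}+B_{i+1}$ is ample over $V_{i}$; applying the negativity lemma (\cite[Corollary~2.16]{eh-analytic-mmp}) to a common resolution exactly as in the algebraic case produces, for every $j\in\mathbb{Z}_{\geq 0}$, a natural isomorphism
$$(\pi_{i})_{*}\mathcal{O}_{X_{i}}\bigl(\lfloor j(K_{X_{i}}+B_{i})\rfloor\bigr)\;\cong\;(\pi_{i+1})_{*}\mathcal{O}_{X_{i+1}}\bigl(\lfloor j(K_{X_{i+1}}+B_{i+1})\rfloor\bigr)$$
compatible with the multiplication. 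Composing over the finitely many steps yields an isomorphism of sheaves of graded $\pi_{*}\mathcal{O}_{X}$-algebras between $\bigoplus_{j}\pi_{*}\mathcal{O}_{X}(\lfloor j(K_{X}+\Delta+A)\rfloor)$ and $\bigoplus_{j}(\pi_{m})_{*}\mathcal{O}_{X_{m}}(\lfloor j(K_{X_{m}}+B_{m})\rfloor)$, so it suffices to prove that the latter is locally finitely generated.

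Finally, since $K_{X_{m}}+B_{m}$ is $\pi$-semi-ample and $\mathbb{Q}$-Cartier, choosing $k$ so that $k(K_{X_{m}}+B_{m})$ is an integral Cartier divisor with $\mathcal{O}_{X_{m}}(k(K_{X_{m}}+B_{m}))$ globally generated over $Z$, one gets a morphism $\psi\colon X_{m}\to T$ over $Z$ (replacing $T$ by its Stein factorization, so $\psi_{*}\mathcal{O}_{X_{m}}\cong\mathcal{O}_{T}$) with $T$ projective over $Z$ and $k(K_{X_{m}}+B_{m})=\psi^{*}\mathcal{L}$ for an invertible sheaf $\mathcal{L}$ ample over $Z$; then $\bigoplus_{j}(\pi_{m})_{*}\mathcal{O}_{X_{m}}(jk(K_{X_{m}}+B_{m}))\cong\bigoplus_{j}(\pi_{T})_{*}\mathcal{L}^{\otimes j}$ is locally finitely generated over $\pi_{*}\mathcal{O}_{X}$ by the finiteness theory for invertible sheaves ample over a Stein space developed in \cite{fujino-analytic-bchm} (this is the relative $\mathrm{Proj}$ picture), and the full algebra $\bigoplus_{j}(\pi_{m})_{*}\mathcal{O}_{X_{m}}(\lfloor j(K_{X_{m}}+B_{m})\rfloor)$, whose graded pieces are coherent $\pi_{*}\mathcal{O}_{X}$-modules by Grauert's theorem, is then locally finitely generated because its $k$-th Veronese subalgebra is. As $z\in Z$ was arbitrary, this completes the argument. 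I expect the only delicate point to be this last paragraph: in the algebraic setting finite generation of the section ring of a relatively semi-ample $\mathbb{Q}$-divisor over an affine base is immediate from $\mathrm{Proj}$, whereas here one must invoke the coherence and finiteness statements for relatively ample sheaves over Stein spaces and handle the fractional parts of the round-downs; granting that, together with the analytic negativity lemma used for the step-by-step comparison, the proof is essentially the analogue of \cite[Proof of Corollary~5.10]{has-mmp-normal-pair}.
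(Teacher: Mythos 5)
Your proposal is correct and follows essentially the same route as the paper: the paper's own proof simply localizes to a Stein open neighbourhood of an arbitrary point $z\in Z$ and invokes Theorem \ref{thm--termination-lcmmp-main}, with the transfer of the section algebra along the MMP and the finite generation for the semi-ample model left implicit. Your write-up just makes those standard intermediate steps (choice of $W$ satisfying (P), negativity-lemma comparison of the graded algebras, Veronese/relative-Proj argument) explicit.
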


\begin{proof}
We only need to check the finite generation on a Stein open neighborhood of any point $z \in Z$. 
Then the finite generation follows from Theorem \ref{thm--termination-lcmmp-main}.  
\end{proof}

\section{Minimal model program without shrinking base space}\label{sec--glue}

In this section, we study the MMP for normal pairs for algebraic stacks and analytic stacks. 
See \cite[Section 27]{lyu--murayama-mmp} and \cite[Section 3]{eh-analytic-mmp-2} for lc case. 

We will freely use the definitions in \cite[Section 3]{eh-analytic-mmp-2}. 
In particular, all algebraic stacks are assumed to be locally of finite type over a base field $k$. 
{\em Complex analytic stacks} are defined as stacks $X$ over the category of complex analytic spaces equipped with the \'{e}tale topology such that
the diagonal map $\Delta_{X}$ is representable by complex analytic spaces and there exists a smooth covering $U\to X$ from a complex analytic space $U$.
All arguments in this section are conducted within the category of schemes or complex analytic spaces. The term {\em stacks} refers to either algebraic stacks or complex analytic stacks,
and {\em spaces} refers to algebraic spaces or complex analytic spaces unless otherwise stated. 

The morphism $\pi \colon X \to Z$ of stacks is {\em projective} if there exists an ample $\mathbb{R}$-line bundle $\mathcal{L}$ on $X$ over $Y$ in the sense of \cite[Definition 3.7]{eh-analytic-mmp-2}. 
We note that any projective morphism is representable by spaces.

\begin{thm}\label{thm--mmp-normalpair-another}
Let $\pi \colon X \to Z$ be a projective morphism from a normal analytic variety $X$ to a Stein space $Z$. 
Let $(X,\Delta)$ be a normal pair and $A$ a $\pi$-ample $\mathbb{R}$-divisor on $X$ such that $K_{X}+\Delta+A$ is globally $\mathbb{R}$-Cartier and $\pi$-pseudo-effective. 
Suppose that ${\rm NNef}(K_{X}+\Delta+A/Z) \cap {\rm Nlc}(X,\Delta)  = \emptyset$. 
Suppose in addition that $(K_{X}+\Delta+A)|_{{\rm Nlc}(X,\Delta)}$, which we think of an $\mathbb{R}$-line bundle on ${\rm Nlc}(X,\Delta)$, is semi-ample over a neighborhood of any compact subset of $Z$. 
We put $(X_{1},B_{1}):=(X,\Delta+A)$. 
Let $H$ be a $\pi$-ample $\mathbb{R}$-divisor on $X$. 
Then, for any point $z \in Z$, after shrinking $Z$ around $z$, there exists a sequence of a $(K_{X}+\Delta)$-MMP over $Z$ around $z$ with scaling of $H$
$$
(X_{1},B_{1}) \dashrightarrow (X_{2},B_{2}) \dashrightarrow \cdots \dashrightarrow (X_{m},B_{m}),
$$
which is represented by bimeromorphic contraction, such that if we define $$\lambda_{i}:={\rm inf}\{\nu \in \mathbb{R}_{\geq 0}\,|\, \text{$K_{X_{i}}+\Delta_{i}+\nu H_{i}$ {\rm is nef over} $z$}\}$$
for each $i \geq 1$, then the following holds.
\begin{itemize}
\item
$\lambda_{i}>\lambda_{i+1}$ for each $i \geq 1$ and $\lambda_{m}=0$, 
\item
$K_{X_{i}}+B_{i}+ \lambda_{i}H_{i}$ is semi-ample over $Z$ for any $i \geq 1$, in particular, $(X_{m},B_{m})$ is a good minimal model of $(X_{1},B_{1})$ over $Z$ around $z$, and
\item
for and $i \geq 2$ and $t \in (\lambda_{i},\lambda_{i-1})$, the bimeromorphic contraction 
$X_{1} \dashrightarrow X_{i}$ over $Z$ is the ample model of $K_{X_{1}}+B_{1}+t H_{1}$ over $Z$ {\rm (cf.~{\cite[Definition~3.6.5]{bchm}})}.
\end{itemize}
\end{thm}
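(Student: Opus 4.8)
The plan is to produce the MMP from Corollary~\ref{cor--mmp-nomralpair-Qfacdlt} (which already carries the strict nef‑threshold structure and the step‑by‑step semi‑ampleness over a neighborhood of $z$) and then force it to terminate using Theorems~\ref{thm--termination-lcmmp-main} and \ref{thm--termination-all-lcmmp}. First I would fix $z\in Z$, shrink $Z$ to a small Stein neighborhood of $z$, and put $W:=\{z\}$, so that $\pi$ and $W$ satisfy (P) and, after replacing $X$ by a resolution, $\rho(X/Z;W)<\infty$. After a further shrinking $A$ is globally $\mathbb{R}$‑Cartier, and by the standard reduction---replacing $A$ by a suitably rescaled general member of $|A/Z|_{\mathbb{R}}$---I may assume ${\rm Nlc}(X,\Delta+A)={\rm Nlc}(X,\Delta)$ set‑theoretically while $K_X+\Delta+A$ is unchanged up to $\sim_{\mathbb{R},Z}$, so that all hypotheses persist. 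Thus, for $(X_1,B_1)=(X,\Delta+A)$, the divisor $K_{X_1}+B_1=K_X+\Delta+A$ is $\pi$‑pseudo‑effective with ${\rm NNef}(K_{X_1}+B_1/Z)\cap{\rm Nlc}(X_1,B_1)=\emptyset$, $(K_{X_1}+B_1)|_{{\rm Nlc}(X_1,B_1)}$ is semi‑ample over a neighborhood of $z$, and, since $H$ is $\pi$‑ample, $K_{X_1}+B_1+\lambda_0H$ is ample over a neighborhood of $z$ for $\lambda_0\gg0$.

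Applying Corollary~\ref{cor--mmp-nomralpair-Qfacdlt} to $(X_1,B_1)$ with $H$ as the auxiliary divisor gives, after shrinking $Z$ around $z$, a sequence of steps of a $(K_{X_1}+B_1)$‑MMP over $Z$ around $z$ with scaling of $H$, say $(X_1,B_1)\dashrightarrow(X_2,B_2)\dashrightarrow\cdots$, whose non‑biholomorphic locus is disjoint from ${\rm Nlc}(X_1,B_1)$, along which $\mathbb{Q}$‑factoriality over $W$ is preserved, and which---writing $\lambda_i:={\rm inf}\{\nu\in\mathbb{R}_{\geq0}\mid K_{X_i}+B_i+\nu H_i\text{ is nef over }z\}$---satisfies $\lambda_i>\lambda_{i+1}$ and $K_{X_i}+B_i+tH_i$ semi‑ample over a neighborhood of $z$ for $t\in(\lambda_i,\lambda_{i-1}]$; in particular $K_{X_i}+B_i+\lambda_iH_i$ is semi‑ample over a neighborhood of $z$, being the strict transform, along the bimeromorphic contraction $X_i\to V_i$, of the divisor on $V_i$ that $K_{X_{i+1}}+B_{i+1}+\lambda_iH_{i+1}$ pulls back from. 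Moreover, by Corollary~\ref{cor--mmp-nomralpair-Qfacdlt}, either this MMP is finite or $\lambda:=\lim_i\lambda_i$ satisfies $\lambda\neq\lambda_i$ for every $i$.

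The main point is termination. I would show $\lambda:=\lim_i\lambda_i=0$: if $\lambda>0$, then since $\lambda_i>\lambda$ for all $i$, a short computation on the contracted faces shows that each step is also a step of a $(K_{X_1}+B_1+\lambda H)$‑MMP with scaling of $\lambda H$, with nef thresholds $\lambda_i/\lambda-1\to 0$; moreover $K_{X_1}+B_1+\lambda H=K_X+\Delta+(A+\lambda H)$ has $\pi$‑ample part $A+\lambda H$, satisfies ${\rm NNef}(K_{X_1}+B_1+\lambda H/Z)\subseteq{\rm NNef}(K_{X_1}+B_1/Z)$ (adding an $\mathbb{R}$‑ample divisor does not raise asymptotic vanishing orders, by subadditivity of $\sigma_P$ together with $\sigma_P$ of a $\pi$‑ample divisor being $0$), hence is disjoint from ${\rm Nlc}(X_1,B_1)$, and its restriction to ${\rm Nlc}(X_1,B_1)$ is still semi‑ample over a neighborhood of $z$; so Theorem~\ref{thm--termination-all-lcmmp} forces those thresholds to reach $0$, i.e.\ $\lambda_n=\lambda$ for some $n$, contradicting $\lambda_i>\lambda$. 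Hence $\lambda=0$. Then $(X_1,B_1)$ has a good minimal model over $Z$ around $z$ by Theorem~\ref{thm--termination-lcmmp-main}, and since $\lim_i\lambda_i=0$ the MMP above terminates---say at $(X_m,B_m)$ with $\lambda_m=0$---with $K_{X_m}+B_m$ semi‑ample over $Z$, by the termination‑under‑scaling arguments of Theorem~\ref{thm--termination-all-lcmmp} and its normal‑pair counterpart used in Theorem~\ref{thm--from-minimodel-to-termi}. As the MMP is now finite, shrinking $Z$ to the intersection of the finitely many neighborhoods of $z$ over which each $K_{X_i}+B_i+\lambda_iH_i$ is semi‑ample upgrades all these statements to hold over $Z$, and the finite sequence is then automatically represented by bimeromorphic contractions.

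Finally, for the ample‑model assertion: for $i\geq2$ and $t\in(\lambda_i,\lambda_{i-1})$, each step $j<i$ contracts a face on which $K_{X_j}+B_j<0$ and $(K_{X_j}+B_j+\lambda_jH_j)=0$, so there $K_{X_j}+B_j+tH_j<0$ since $\lambda_j\geq\lambda_{i-1}>t$; hence $X_1\dashrightarrow X_i$ is a sequence of steps of a $(K_{X_1}+B_1+tH_1)$‑MMP over $Z$, ending with $K_{X_i}+B_i+tH_i$ ample over $Z$ (the open‑interval strengthening of the semi‑ampleness of the previous paragraph, cf.\ Theorem~\ref{thm--mmp-nefthreshold-strict}), and by the negativity lemma (\cite[Corollary~2.16]{eh-analytic-mmp}) in the usual way this identifies $X_1\dashrightarrow X_i$ with the ample model of $K_{X_1}+B_1+tH_1$ over $Z$. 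The $\mathbb{Q}$‑factoriality statement is carried along each step by Corollary~\ref{cor--mmp-nomralpair-Qfacdlt}. I expect the main obstacle to be exactly the termination step: reconciling the strict‑threshold MMP of Corollary~\ref{cor--mmp-nomralpair-Qfacdlt}---a priori possibly infinite---with the finite, terminating MMP guaranteed by Theorem~\ref{thm--termination-lcmmp-main}, which forces one to pass to the tail MMP for $K_{X_1}+B_1+\lambda H$ and to control ${\rm NNef}(K_{X_1}+B_1+\lambda H/Z)$; the hypothesis that $(K_X+\Delta+A)|_{{\rm Nlc}(X,\Delta)}$ be semi‑ample over a neighborhood of \emph{every} compact subset (not merely of $W$) is what allows this argument to be run uniformly over all $z$, as the gluing in the rest of Section~\ref{sec--glue} requires.
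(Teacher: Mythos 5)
Your overall strategy (produce a scaling MMP with strictly decreasing thresholds, show the limit of the thresholds is $0$ by reinterpreting the MMP as one for a shifted boundary and invoking Theorem \ref{thm--termination-all-lcmmp}, then read off the conclusions) is the same as the paper's, but two concrete steps do not go through as you have set them up. First, you build the MMP from Corollary \ref{cor--mmp-nomralpair-Qfacdlt}, whose conclusion only gives \emph{semi-ampleness} of $K_{X_i}+B_i+tH_i$ near $W$ for $t\in(\lambda_i,\lambda_{i-1}]$. The third bullet of the theorem needs $K_{X_i}+B_i+tH_i$ to be \emph{ample} on $X_i$ for $t$ in the open interval; a good minimal model is the ample model only when the nef divisor is actually ample there, and semi-ampleness alone leaves open the possibility that the associated fibration contracts something. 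You acknowledge this by writing ``the open-interval strengthening \ldots cf.\ Theorem \ref{thm--mmp-nefthreshold-strict}'', but your MMP was not constructed via that theorem, so the ampleness is asserted, not proved. The paper's proof constructs the MMP directly from Theorem \ref{thm--mmp-nefthreshold-strict} (applied to the quasi-log space $[X,K_X+\Delta+A]$ induced by $(X,\Delta+A)$), precisely because its conclusion contains the open-interval ampleness.

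Second, Theorem \ref{thm--termination-all-lcmmp} (and likewise Theorem \ref{thm--from-minimodel-to-termi}) is stated for a $(K_X+\Delta+A)$-MMP \emph{with scaling of the ample part $A$ itself}, whereas your MMP is run with scaling of the separate divisor $H$. Your re-expression in the $\lambda>0$ case only matches the theorem if you decompose the boundary as $(\Delta+A)+\lambda H$ with ample part $\lambda H$ (you write ``ample part $A+\lambda H$'', which reinstates the mismatch), and in the case $\lambda=0$ — exactly where you need termination and the semi-ampleness of $K_{X_m}+B_m$ over $Z$, which Corollary \ref{cor--mmp-nomralpair-Qfacdlt} does not supply at $t=\lambda_m=0$ — no such re-expression with $\lambda H$ is available, so the appeal to ``termination-under-scaling arguments'' is a gap. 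The single missing idea that repairs both points is the paper's opening reduction to $A=H$: shrink $Z$ so that $A-\epsilon H$ is ample, choose a general member $A'\in|A-\epsilon H/Z|_{\mathbb{R}}$ with ${\rm Nlc}(X,\Delta+A')={\rm Nlc}(X,\Delta)$, and replace $\Delta$, $A$, $H$ by $\Delta+A'$, $\epsilon H$, $\epsilon H$. After this, Theorem \ref{thm--mmp-nefthreshold-strict} gives the MMP with scaling by the ample part together with the interval ampleness, and Theorem \ref{thm--termination-all-lcmmp} applies verbatim to force $\lambda=0$, termination, and the semi-ampleness of the final model. (A small additional slip: you cannot ``replace $X$ by a resolution'' to get $\rho(X/Z;W)<\infty$ — the MMP must be run on $X$ itself; finiteness of $\rho(X/Z;W)$ holds under (P) without any replacement.)
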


\begin{proof}
By shrinking $Z$, we may assume that $A-\epsilon H$ is ample over $Z$. 
We take a general member $A'$ of $|A-\epsilon H/Z|_{\mathbb{R}}$ such that ${\rm Nlc}(X,\Delta)={\rm Nlc}(X,\Delta+A')$ as closed analytic subspace of $X$. 
By replacing $\Delta$, $A$, and $H$ with $\Delta+A'$, $\epsilon H$, and $\epsilon H$ respectively, we may assume $A=H$. We fix a point $z \in Z$. 
By Theorem \ref{thm--mmp-nefthreshold-strict}, there exists a sequence of steps of a $(K_{X}+\Delta+A)$-MMP over $Z$ around $z$ with scaling of $A$ 
$$
(X_{1},\Delta_{1}+A_{1}) \dashrightarrow (X_{2},\Delta_{2}+A_{2}) \dashrightarrow \cdots \dashrightarrow (X_{i},\Delta_{i}+A_{i}) \dashrightarrow \cdots
$$
such that if we put 
$$\lambda_{i}:={\rm inf}\{\mu \in \mathbb{R}_{\geq 0}\,| \,\text{$K_{X_{i}}+\Delta_{i}+\mu A_{i}$ is nef over $z$} \}$$ 
for each $i \geq 1$, then the following properties hold.
\begin{itemize}
\item
$\lambda_{i}>\lambda_{i+1}$ for all $i \geq 1$, and 
\item
$K_{X_{i}}+\Delta_{i}+t A_{i}$ is ample over a neighborhood of $z$ for all $i \geq 1$ and $t \in (\lambda_{i},\lambda_{i-1})$.  
\end{itemize}
We put $\lambda:={\rm lim}_{i \to \infty} \lambda_{i}$. 
Then the MMP is a $(K_{X}+\Delta+(1+\lambda)A)$-MMP over $Z$ around $z$ with scaling of $A$. 
By Theorem \ref{thm--termination-all-lcmmp}, we have $\lambda=0$ and $\lambda_{m}=\lambda$ for some $m \geq 1$. 
By shrinking $Z$ around $z$, we get the desired MMP. 
\end{proof}

\begin{defn}[$t$-th output of $D$-MMP with scaling of $H$, {\cite[Definition 3.13]{eh-analytic-mmp-2}}] \label{outputMMP}
Let $\pi\colon X\to Z$ be a projective morphism of stacks
and assume that $X$ is normal.
Let $D$ and $H$ be $\mathbb{R}$-line bundles on $X$ over $Z$ (\cite[Definition 3.7]{eh-analytic-mmp-2}) such that $H$ is big over $Z$.
Let us assume that the pseudo-effective threshold of $H|_{U}$ with respect to $D|_{U}$, denoted by $\mu$, is constant for any smooth morphism $U\to Z$ from a space $U$ such that $X\times_{Z}U$ is non-empty.
Note that if $X$ is irreducible, then 
this assumption is satisfied 
and $\mu$ equals
the minimal non-negative number such 
that $D+\mu H$ is pseudo-effective over $Z$.
For $t>\mu$, 
a birational or bimeromorphic map $\varphi_{t}\colon X\dasharrow X_{t}$
is called 
a {\em $t$-th output of a $D$-MMP with scaling of $H$ over $Z$}
if it is an ample model (\cite[Definition 3.10]{eh-analytic-mmp-2}) of $D+(t-\varepsilon) H$ for sufficiently small $\varepsilon >0$ smooth locally on $Z$, that is, there exist a smooth covering $\{Z_{i}\to Z\}_{i}$ and positive numbers $\{a_{i}\}_{i}$ such that the base change of $\varphi_{t}$ to $Z_{i}$ is an ample model of $D|_{Z_i}+(t-\varepsilon)H|_{Z_i}$ for $0<\varepsilon<a_i$.

We say that the {\em $D$-MMP with scaling of $H$ over $Z$ exists} if there exists a $t$-th output of a $D$-MMP with scaling of $H$ over $Z$ for any $t>\mu$.
\end{defn}

For the termination of the above MMP, see \cite[Definition 3.17]{eh-analytic-mmp-2}.

\begin{thm}\label{thm-glue}
Let $\pi \colon X \to Z$ be a projective morphism of algebraic stacks locally of finite type over $\mathbb{C}$ or complex analytic stacks.
Let $(X,B)$ be an irreducible normal pair such that $K_{X}+B$ is pseudo-effective over $Z$. 
Suppose that there exists a smooth covering $\{Z_{i} \to Z\}_{i}$ from quasi-projective schemes or Stein spaces $Z_{i}$ such that
\begin{itemize}
\item
the pullback $K_{X_{i}}+B_{i}$ is globally $\mathbb{R}$-Cartier for any $i$, 
\item
$B_{i}=\Delta_{i}+A_{i}$ for some effective $\mathbb{R}$-divisor $\Delta_{i}$ and effective relatively ample $\mathbb{R}$-divisor $A_{i}$ on $X_{i}$, 
\item
${\rm NNef}(K_{X_{i}}+B_{i}/Z_{i}) \cap {\rm Nlc}(X_{i},\Delta_{i})  = \emptyset$, and
\item 
the $\mathbb{R}$-line bundle $(K_{X_{i}}+B_{i})|_{{\rm Nlc}(X_{i},\Delta_{i})}$ on ${\rm Nlc}(X_{i},\Delta_{i})$ is semi-ample over $Z$. 
\end{itemize}
Then a $(K_{X}+B)$-MMP with scaling of a $\pi$-ample $\mathbb{R}$-line bundle $H$ over $Z$ in the sense of \cite[Definition 3.13]{eh-analytic-mmp-2} exists and it terminates smooth locally on $Z$ whose output is a good minimal model of $(X,B)$ over $Z$. 
\end{thm}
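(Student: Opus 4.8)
The plan is to reduce everything to the local statement Theorem \ref{thm--mmp-normalpair-another} (in the complex analytic case) and to its algebraic counterpart extracted from \cite{has-mmp-normal-pair}, and then to glue the resulting local outputs by appealing to the uniqueness of ample models. Write $D:=K_{X}+B$. Since $(X,B)$ is irreducible and $D$ is pseudo-effective over $Z$, the pseudo-effective threshold $\mu$ of $H$ with respect to $D$ appearing in Definition \ref{outputMMP} equals $0$; so the task is to produce, for every $t>0$, a $t$-th output $\varphi_{t}\colon X\dashrightarrow X_{t}$ over $Z$, and to show that these stabilize smooth-locally on $Z$.

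Fix $t>0$. For each index $i$ and each point $z\in Z_{i}$ I would apply Theorem \ref{thm--mmp-normalpair-another} to $X_{i}\to Z_{i}$, the normal pair $(X_{i},\Delta_{i})$, the $\pi$-ample divisor $A_{i}$, and $H_{i}$; the hypotheses are exactly those assumed, since $K_{X_{i}}+\Delta_{i}+A_{i}=K_{X_{i}}+B_{i}$ is globally $\mathbb{R}$-Cartier and pseudo-effective over $Z_{i}$ (pseudo-effectivity being stable under the smooth base change $Z_{i}\to Z$), since ${\rm NNef}(K_{X_{i}}+B_{i}/Z_{i})\cap{\rm Nlc}(X_{i},\Delta_{i})=\emptyset$, and since $(K_{X_{i}}+B_{i})|_{{\rm Nlc}(X_{i},\Delta_{i})}$ is semi-ample over a neighborhood of any compact subset of $Z_{i}$. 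In the algebraic case one argues identically, $Z_{i}$ being quasi-projective. This yields, after shrinking $Z_{i}$ around $z$ to an open subset $U_{z}$, a finite sequence of steps of a $(K_{X_{i}}+B_{i})$-MMP with scaling of $H_{i}$ over $U_{z}$, together with strictly decreasing critical values $\lambda^{(z)}_{1}>\cdots>\lambda^{(z)}_{m(z)}=0$, whose intermediate maps are precisely the ample models of $D_{i}+t'H_{i}$ over $U_{z}$ for $t'$ in the corresponding open intervals (and the identity for $t'>\lambda^{(z)}_{1}$), and whose terminal model $X_{i,m(z)}$ is a good minimal model. The family $\{U_{z}\to Z\}$, with $i$ and $z\in Z_{i}$ varying, is a smooth covering of $Z$, and over each $U_{z}$ we obtain the ample model of $D+(t-\varepsilon)H$ for all sufficiently small $\varepsilon>0$. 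Since an ample model is unique once it exists, these restrictions agree on overlaps and descend along $\{U_{z}\to Z\}$ to a single bimeromorphic (or birational) map $\varphi_{t}\colon X\dashrightarrow X_{t}$ over $Z$, which is by construction smooth-locally the ample model of $D+(t-\varepsilon)H$, hence a $t$-th output. As $t>\mu=0$ was arbitrary, the $(K_{X}+B)$-MMP with scaling of $H$ over $Z$ exists.

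For termination, I would note that over each $U_{z}$, as soon as $0<t<\lambda^{(z)}_{m(z)-1}$, the $t$-th output stabilizes at the terminal model $X_{i,m(z)}$, on which $K+B$ is semi-ample over $U_{z}$; hence the outputs are eventually constant over each member of the covering $\{U_{z}\to Z\}$, which is exactly what it means for the MMP to terminate smooth-locally on $Z$ (\cite[Definition 3.17]{eh-analytic-mmp-2}). These terminal models, being the ample models of $D+tH$ over $U_{z}$ for all small $t>0$, glue by uniqueness to a single model $X_{m}\to Z$ with a normal pair $(X_{m},B_{m})$; and since $K_{X_{m}}+B_{m}$ is semi-ample over $Z$ smooth-locally, it is semi-ample over $Z$, so $(X_{m},B_{m})$ is a good minimal model of $(X,B)$ over $Z$.

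The step I expect to be the main obstacle is the gluing: checking that the locally produced ample models are genuinely compatible over the double overlaps of $\{U_{z}\to Z\}$ and descend to a map of stacks over $Z$. This rests on the uniqueness of ample models and on the stability of the notions of ample model and good minimal model under smooth base change, so that the two restrictions of an ample model to an overlap are canonically identified. One must also take care that the thresholds $\lambda^{(z)}_{\bullet}$ depend on $z$, so the ``eventually constant'' bound in the termination statement varies over the covering, which is harmless for the smooth-local formulation. The remaining verifications---the stability of the property of being globally $\mathbb{R}$-Cartier, of pseudo-effectivity, and of the non-lc locus under $Z_{i}\to Z$, and the translation of the hypotheses into those of Theorem \ref{thm--mmp-normalpair-another}---are routine.
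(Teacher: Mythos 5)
Your proposal is correct and follows essentially the same route as the paper: the paper's proof is exactly "apply Theorem \ref{thm--mmp-normalpair-another} locally and glue," with the gluing step you sketch (descent of the local ample models via their uniqueness and compatibility with smooth base change) delegated to \cite[Lemma 3.14]{eh-analytic-mmp-2}. Your main worry, the compatibility over overlaps and effectivity of descent, is precisely the content of that cited lemma, so no genuinely new argument is needed beyond what you outline.
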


\begin{proof}
This follows from Theorem \ref{thm--mmp-normalpair-another} and \cite[Lemma 3.14]{eh-analytic-mmp-2}. 
\end{proof}



\begin{thebibliography}{BCHM10}












\bibitem[B12]{birkar-flip}
C.~Birkar, 
Existence of log canonical flips and a special LMMP, 
Publ. Math. Inst. Hautes \'Etudes Sci. {\textbf{115}} (2012), no.~1, 325--368.




\bibitem[BCHM10]{bchm}C.~Birkar, P.~Cascini, C.~D.~Hacon, J.~M\textsuperscript{c}Kernan, Existence of minimal models for varieties of log general type, J. Amer. Math. Soc. {\textbf{23}} (2010), no.~2, 405--468.










\bibitem[BBP13]{bbp} S.~Boucksom, A.~Broustet, G.~Pacienza, Uniruledness of stable base loci of adjoint linear systems via Mori theory, Math. Z. {\textbf{275}} (2013), no.~1--2, 499--507. 










\bibitem[EH24a]{eh-analytic-mmp} M.~Enokizono, K.~Hashizume, Minimal model program for log canonical pairs on complex analytic spaces, preprint (2024). 

\bibitem[EH24b]{eh-analytic-mmp-2} M.~Enokizono, K.~Hashizume, On termination of minimal model program for log canonical pairs on complex analytic spaces, preprint (2024). 








\bibitem[F07a]{fujino-what-log-ter}O.~Fujino, {\it What is log terminal?} In Flips for $3$-folds and $4$-folds, Oxford University Press (2007).

\bibitem[F07b]{fujino-sp-ter}O.~Fujino, {\it Special termination and reduction to pl flips.} In Flips for $3$-folds and $4$-folds, Oxford University Press (2007).














\bibitem[F17]{fujino-book}O.~Fujino, {\em Foundations of the minimal model program}, MSJ Mem. \textbf{35}, Mathematical Society of Japan, Tokyo, 2017. 



\bibitem[F22a]{fujino-analytic-bchm}
O.~Fujino, Minimal model program for projective morphisms between complex analytic spaces, preprint (2022), arXiv:2201.11315v1. 


\bibitem[F22b]{fujino-quasi-log-analytic}
O.~Fujino, On quasi-log structure for complex analytic spaces, preprint (2022), arXiv:2209.11401v3. 

\bibitem[F24a]{fujino-analytic-conethm}
O.~Fujino, Cone and contraction theorem for projective morphisms between complex analytic spaces, MSJ Mem. \textbf{42}, Mathematical Society of Japan, Tokyo, 2024. 


\bibitem[F25a]{fujino-morihyper} O.~Fujino, Cone theorem and Mori hyperbolicity, J. Differential Geom. {\textbf{129}} (2025), no.~3, 617--693.  


\bibitem[F25b]{fujino-analytic-lcabundance}
O.~Fujino, On finiteness of relative pluricanonical representations, preprint (2025). 



\bibitem[FG12]{fg-bundle} O.~Fujino, Y.~Gongyo, On canonical bundle formulas and subadjunctions, Michigan Math. J. {\textbf{61}} (2012), no.~2, 255--264. 








\bibitem[FH23]{fujino-hashizume-adjunction}O.~Fujino, K.~Hashizume, Adjunction and inversion of adjunction, Nagoya Math. J. {\textbf{249}} (2023), 119--147. 



\bibitem[FST11]{fst-suppli-nonlc-ideal}
O.~Fujino, K.~Schwede, S.~Takagi, Supplements to non-lc ideal sheaves. RIMS Kokyuroku Bessatsu, B24, Res. Inst. Math. Sci. (RIMS), Kyoto, 2011, 1--46.




















\bibitem[HX13]{haconxu-lcc}C.~D.~Hacon, C.~Xu, Existence of log canonical closures, Invent. Math. {\textbf{192}} (2013), no.~1, 161--195. 











\bibitem[H20]{has-flop} K.~Hashizume, Relations between two log minimal models of log canonical pairs,
Int. J. Math. {\textbf{31}} (2020), no.~13, 2050103.





\bibitem[H23]{has-mmp-normal-pair} K.~Hashizume, Minimal model program for normal pairs along log canonical locus, to appear in Forum Math. Sigma.  


\bibitem[HH20]{hashizumehu}
K.~Hashizume, Z.~Hu, 
On minimal model theory for log abundant lc pairs, J. Reine Angew. Math.,  {\textbf{767}} (2020), 109--159.





















\bibitem[K13]{kollar-mmp}J.~Koll\'ar, {\em Singularities of the Minimal Model Program}, Cambridge Tracts in Mathematics {\textbf{200}}. Cambridge University Press, Cambridge, 2013.









\bibitem[LM22]{lyu--murayama-mmp} S.~Lyu, T.~Murayama, The relative minimal model program for excellent algebraic spaces and analytic spaces in equal characteristic zero, preprint (2022), arXiv:2209.08732v2. 













\bibitem[N04]{nakayama}N.~Nakayama, {\em Zariski-decomposition and abundance}, MSJ Mem. {\textbf{14}}, Mathematical Society of Japan, Tokyo, 2004. 







\bibitem[S70]{Shi}
B.~Shiffman, Extending analytic subvarieties, {\em Symposium on Several Complex Variables}, Lecture Notes in Mathematics, Springer, Park City, Utah, USA 1970, 208--222.

\bibitem[TX23]{tsakanikas-xie} N.~Tsakanikas, Z.~Xie, Comparison and uniruledness of asymptotic base loci, to appear in Michigan Math. J. 



\end{thebibliography}
\end{document}